\documentclass[10pt,reqno]{article}

\usepackage[a4paper,margin=1.15in]{geometry}

\usepackage[english]{babel}
\usepackage{microtype}
\usepackage{titlesec}

\usepackage{amsmath, amsthm, amssymb, mathtools}
\usepackage{thmtools}
\usepackage{latexsym}

\usepackage[T1]{fontenc}
\usepackage{bbm}
\usepackage[cal=boondoxo]{mathalfa}

\usepackage{xspace}
\usepackage{setspace}
\usepackage{natbib}
\usepackage{tabularx}
\usepackage{stmaryrd}
\usepackage{dsfont}
\usepackage{euscript}
\usepackage{enumitem}
\usepackage{algpseudocode}
\usepackage{algorithm}

\usepackage{xcolor}

\usepackage[utf8]{inputenc}

\usepackage{xcolor}
\usepackage{makeidx}
\usepackage[bibliography=common]{apxproof}
\usepackage[normalem]{ulem}
\usepackage[
pdftex,
colorlinks=true,
linkcolor=blue!50!black,
citecolor=blue!50!black,
urlcolor=blue!50!black
]{hyperref}

\theoremstyle{plain}
\newtheorem{theorem}{Theorem}[section]
\newtheorem{lemma}[theorem]{Lemma}
\newtheorem{corollary}[theorem]{Corollary}
\newtheorem{proposition}[theorem]{Proposition}

\theoremstyle{definition}

\newtheorem{example}[theorem]{Example}
\newtheorem{assumption}[theorem]{Assumption}

\theoremstyle{remark}
\newtheorem{remark}[theorem]{Remark}
\newtheorem*{remark*}{Remark}

\newcommand{\cB}{\mathcal{B}}
\newcommand{\cD}{\mathcal{D}}
\newcommand{\cE}{\mathcal{E}}
\newcommand{\cF}{\mathcal{F}}
\newcommand{\cG}{\mathcal{G}}
\newcommand{\cH}{\mathcal{H}}

\newcommand{\cL}{\mathcal{L}}

\newcommand{\cN}{\mathcal{N}}
\newcommand{\cP}{\mathcal{P}}
\newcommand{\cR}{\mathcal{R}}

\newcommand{\cU}{\mathcal{U}}

\newcommand{\cX}{\mathcal{X}}
\newcommand{\cY}{\mathcal{Y}}

\newcommand{\cC}{\mathcal{C}}

\newcommand{\wh}{\widehat}
\newcommand{\wt}{\widetilde}
\newcommand{\qandq}{\quad \text{and} \quad}
\newcommand{\one}{\mathbf{1}}

\newcommand{\bbR}{\mathbb{R}}
\newcommand{\bbE}{\mathbb{E}}
\newcommand{\bbP}{\mathbb{P}}
\newcommand{\bbN}{\mathbb{N}}
\newcommand{\bbZ}{\mathbb{Z}}
\newcommand{\bbT}{\mathbb{T}}

\newcommand{\bbC}{\mathbb{C}}

\newcommand{\term}[1]{\mathcal{T}_\text{#1}}
\newcommand{\abs}[1]{| #1 |}

\newcommand{\LRabs}[1]{\left| #1 \right|}
\newcommand{\norm}[1]{\| #1 \|}
\newcommand{\bnorm}[1]{\big\| #1 \big\|}
\newcommand{\Bnorm}[1]{\Big\| #1 \Big\|}
\newcommand{\LRnorm}[1]{\left\| #1 \right\|}

\newcommand{\opnorm}[1]{\vert\!\vert\!\vert #1 \vert\!\vert\!\vert}

\newcommand{\la}{\langle}
\newcommand{\ra}{\rangle}
\newcommand{\Tr}{\operatorname{Tr}}

\newcommand{\MMD}{\mathrm{MMD}}
\newcommand{\mix}{\mathrm{mix}}
\newcommand{\Fix}{\mathrm{Fix}}
\newcommand{\Ran}{\mathrm{Ran}}
\newcommand{\rel}{\mathrm{rel}}

\newcommand{\Lip}{\mathrm{Lip}}
\newcommand{\TV}{\mathrm{TV}}
\newcommand{\Var}{\operatorname{Var}}
\newcommand{\Cov}{\operatorname{Cov}}
\newcommand{\diam}{\operatorname{diam}}
\newcommand{\Span}{\operatorname{span}}
\newcommand{\eff}{\mathrm{eff}}
\newcommand{\supp}{\operatorname{supp}}
\newcommand{\Law}{\operatorname{Law}}
\DeclareMathOperator*{\argmin}{arg\,min}

\hypersetup{
  pdftitle    = {Markov Chain Convergence in Kernel-Induced Geometry},
  pdfauthor   = {Oleksii Kachaiev, Luc Brogat-Motte, Silvia Villa, Lorenzo Rosasco},
  pdfsubject  = {Mathematics Subject Classification (2020): 60J05, 37A25, 46E22, 62M05, 68Q32},
  pdfkeywords = {Markov chains, contraction, maximum mean discrepancy, reproducing kernel Hilbert spaces,
                 kernel mean embeddings, occupation measure, ergodicity, mixing,
                 learning dynamical systems, single trajectory, regularized least squares,
                 learning rates, effective dimension},
  pdflang     = {en},
  pdfdisplaydoctitle = true,
  pdfstartview = FitH,
  bookmarksnumbered = true,
  bookmarksopen = true,
  bookmarksopenlevel = 1,
  colorlinks  = true,
  linkcolor  = blue!60!black,
  citecolor  = blue!60!black,
  urlcolor   = blue!60!black,
}

\title{
Markov Chain Convergence in Kernel-Induced Geometry
}

\author{%
  Oleksii Kachaiev$^{1}$, Luc Brogat-Motte$^{2}$, Silvia Villa$^{1}$, Lorenzo Rosasco$^{2,3}$\\
  \rule{0pt}{1.6em}
  \small $^1$MaLGa center, DIMA, Università degli Studi di Genova, Genoa, Italy\\
  \small $^2$SAIL Unit, Istituto Italiano di Tecnologia, Genoa, Italy\\
  \small $^3$MaLGa Center, DIBRIS, Università degli Studi di Genova,
    Genoa, Italy\\
  \rule{0pt}{1.4em}%
  \small Correspondence to: \texttt{oleksii.kachaiev@edu.unige.it}%
}

\date{}

\begin{document}

\maketitle
\begin{abstract}
We study the convergence of Markov chains in the geometry induced by Hilbertian embeddings associated with positive definite kernels, also called Maximum Mean Discrepancy (MMD). 
Under a contraction assumption in the kernel-induced geometry, and a standard spectral gap assumption,  we establish non-asymptotic, high-probability bounds for the convergence of the empirical occupation measure to the invariant measure.  The obtained bounds separate the roles of the two conditions:  the spectral gap determines the effective sample size in the leading term, while  the kernel contraction enters only through a linear correction. As an application, we study one-step prediction of dynamical systems from a single trajectory, deriving fast learning rates for a nonlinear least-squares estimator. 
The rates highlight the interplay between mixing properties in the kernel-induced geometry and the  classical source and  capacity conditions  used in the analysis of least-squares.
\end{abstract}

\section{Introduction}

Let \((X_t)_{t\geq0}\) be a Markov chain on \(\cX\) with
transition kernel \(P\).
Its \emph{empirical occupation measure} up to time \(T\) is
\begin{equation}
\wh\mu_T:=\frac{1}{T}\sum_{t=0}^{T-1}\delta_{X_t}.
\end{equation}
This random probability measure records the proportion of time
spent by the trajectory in each measurable set. An important question in the analysis of Markov chains (and, more broadly, of dynamical systems) is whether there exists a probability measure \(\pi\) such that
\(\wh\mu_T\to\pi\) as \(T\to\infty\). Such a limit, when it exists, describes the statistical behavior of trajectories over long times~\citep{eckmann1985ergodic,young2002srb}. For example, if \((X_t)_{t\geq0}\) are independent random variables on a separable metric space \( \cX \) with common law \(\pi\), the empirical law of large numbers gives, almost surely,
\begin{equation}
\label{eq:weak-conv}
\wh\mu_T \rightharpoonup \pi,
\qquad \text{or, equivalently,} \qquad
\int\!f \, d \wh\mu_T
= \frac{1}{T}\sum_{t=0}^{T-1} f(X_t)
\to \int\! f  \, d\pi
\end{equation}
for every bounded continuous test function \(f\). For general Markov chains, however, convergence to a deterministic limiting
measure is not automatic. For classical conditions ensuring almost-sure convergence of
\(\wh\mu_T\) as in~\eqref{eq:weak-conv}, see, e.g.,~\citet{meyn2012markov}.

\medskip
To quantify the discrepancy between \(\wh\mu_T\) and \(\pi\),
we need to choose a distance on the space of probability measures.
A flexible framework is provided by \emph{integral probability
metrics} (IPMs),
\begin{equation}
d_{\cF}(\mu,\nu)
:= \sup_{f\in\cF}
\LRabs{\int f\,d\mu-\int f\,d\nu},
\label{eq:ipm-intro}
\end{equation}
where \(\cF\) is a class of measurable test functions integrable
with respect to the measures under consideration.
The choice of \(\cF\) determines which differences between
probability measures are detected, and hence the notion of
convergence being studied.
For example, the class of measurable functions satisfying
\(\norm{f}_\infty\leq1\) gives the total variation norm,
while the class of \(1\)-Lipschitz functions gives the Wasserstein
distance \(W_1\) on probability measures with finite first moments.
Further examples and general properties of IPMs can be found
in \citet{muller1997integral} and \citet{rachev2013methods}.

\medskip
Our aim is to obtain non-asymptotic bounds on
\(d_{\cF}(\wh\mu_T,\pi)\), for suitable choices of \(\cF\),
with explicit dependence on the trajectory length \(T\).
Such bounds are useful in sampling, where they quantify
the error incurred when approximating a target distribution
by a finite trajectory of a Markov chain%
~\citep{roberts2004general,joulin2010curvature}.
Related questions also arise in the analysis of stochastic
optimization algorithms through their interpretation as Markov
chains~\citep{dieuleveut2020bridging,yu2021analysis}. 

\medskip
Non-asymptotic bounds have
been studied in several probability metrics.
For uniformly ergodic Markov chains on countable state spaces,
bounds in total variation follow from the concentration and
moment inequalities of \citet{paulin2015concentration}.
Bounds in \(W_1\) under suitable contractivity assumptions
have been obtained by
\citet{boissard2011simple,kloeckner2020empirical,riekert2022convergence}.
For \(W_p\), \citet{fournier2015rate} establish moment bounds
for empirical measures of independent samples and Markov chains.
Related bounds for occupation measures of continuous-time
ergodic Markov processes are obtained by
\citet{schilling2025convergence} under Wasserstein contractivity
or functional inequalities, together with moment assumptions
on the limiting measure.

\medskip
In this work, we consider IPMs obtained from  reproducing kernel Hilbert space (RKHS), known as
\emph{maximum mean discrepancies} (MMD).
Let \(k:\cX\times\cX\to\bbR\) be a measurable positive-definite
kernel with separable RKHS \(\cH\), and let
\(\phi(x):=k(x,\cdot)\) be its canonical feature map.
Taking \(\cF\) to be the unit ball of \(\cH\)
in~\eqref{eq:ipm-intro}, Hilbert-space duality gives
\begin{equation}
\MMD_\phi(\mu,\nu)
=
\LRnorm{
\int \phi(x)\,\mu(dx)
-
\int \phi(x)\,\nu(dx)
}_{\cH},
\label{MMD_intro}
\end{equation}
whenever the two Bochner integrals exist;
see, e.g., \citet{smola2007hilbert,gretton2012kernel}.
The same construction applies to any measurable feature map
\(\phi:\cX\to\cH\) into a separable Hilbert space, subject to
the same integrability requirement. The resulting discrepancy
is the MMD associated with the kernel
\(k(x,y)=\langle\phi(x),\phi(y)\rangle_{\cH}\).
We develop this feature-map formulation in
Section~\ref{sec:kernel-geometry}.

\medskip
When the feature map is uniformly bounded, that is,
\(\sup_{x\in\cX}\norm{\phi(x)}_{\cH}<\infty\),
the discrepancy is well defined for all probability measures,
including empirical occupation measures.
For independent observations with common law \(\pi\),
\(\MMD_\phi(\wh\mu_T,\pi)\) admits bounds of order
\(T^{-1/2}\), both in expectation and at each fixed confidence
level~\citep{gretton2012kernel}.
For dependent observations, \citet{cherief2022finite} obtain
high-probability bounds under weak-dependence assumptions
in their analysis of minimum-MMD estimators.
More recently, \citet{wolfer2025variance} derive such bounds
for stationary \(\varphi\)- and \(\beta\)-mixing sequences.

\paragraph{Contributions.}
Our first contribution is a non-asymptotic high-probability
bound for empirical occupation measures of Markov chains
on a general state space.
Assume that the chain is stationary, has an absolute
\(L^2(\pi)\) spectral gap \(1-\rho>0\), and satisfies
the following uniform geometric relaxation condition:
\begin{equation}
\label{eq:main-thm-condition}
\sup_{x\in\cX}
\MMD_\phi(P^n(x,\cdot),\pi)
\leq C\theta^n,
\qquad n\geq0,
\end{equation}
for some \(C<\infty\) and \(\theta\in[0,1)\).
Then, for every \(T\geq1\) and \(\delta\in(0,1)\),
with probability at least \(1-\delta\),
\begin{equation}
\MMD_\phi(\wh\mu_T,\pi)
\lesssim
\norm{\phi}_\pi
\sqrt{\frac{\tau_\rel\log(4/\delta)}{T}}
+
\frac{\tau_\theta\sqrt{\tau_\rel}\log(4/\delta)}{T},
\end{equation}
where \(\tau_\rel:=(1-\rho)^{-1}\),
\(\tau_\theta:=C/(1-\theta)\), and
\(\norm{\phi}_\pi^2:=\int\norm{\phi(x)}_\cH^2\,\pi(dx)\);
the implicit constant is universal.
The leading term depends on the spectral gap and the second
moment of the feature map, while the relaxation constants
\(C\) and \(\theta\) enter only through the \(T^{-1}\) term.
We extend this bound to initial laws \(\nu\ll\pi\) with
\(d\nu/d\pi\in L^p(\pi)\) for some \(p>1\), leaving the
leading term unchanged; see
Theorems~\ref{thm:mmd-occupation-measure-whp}
and~\ref{thm:mmd-occupation-measure-whp-nonstationary}. We also give sufficient conditions
for~\eqref{eq:main-thm-condition}.

\medskip
Our second contribution concerns kernel ridge regression
(KRR) for learning the one-step prediction function
\begin{equation}
f_\star(x):=\int x'\,P(x,dx')
\end{equation}
of a real-valued
Markov chain from a single trajectory.
The relevant MMD geometry arises from the covariance
fluctuations controlling estimator stability.
Writing \(\Sigma\) and \(\wh\Sigma\) for the population
and empirical covariance operators, and denoting
\(\Sigma_\lambda:=\Sigma+\lambda I\), we have
\begin{equation}
\LRnorm{
\Sigma_\lambda^{-1/2}
(\wh\Sigma-\Sigma)
\Sigma_\lambda^{-1/2}
}_{\cL_2(\cH)}
=
\MMD_{\Phi_\lambda}(\wh\mu_T,\pi),
\end{equation}
where
\(\Phi_\lambda(x):=
\Sigma_\lambda^{-1/2}
[\phi(x)\otimes\phi(x)]
\Sigma_\lambda^{-1/2}\).
We obtain relaxation bounds in this
\(\lambda\)-dependent geometry from contraction in a fixed
Hilbert scale associated with the covariance-feature kernel
\(K(x,y)=k(x,y)^2\).

\medskip
For stationary trajectories, under the spectral and
Hilbert-scale assumptions,
and classical source and capacity conditions of
Theorem~\ref{thm:final-sample-rate}, we obtain
\begin{equation}
\cE(\wh f_{\lambda_T},f_\star)
=
\bnorm{\wh f_{\lambda_T}-f_\star}_\pi^2
\lesssim
T^{-\frac{2r}{2r+p}},
\qquad
\lambda_T\asymp T^{-\frac{1}{2r+p}}.
\end{equation}
For every \(\delta\in(0,1)\), this bound holds with
probability at least \(1-\delta\) for each sufficiently
large \(T\), up to logarithmic factors in \(1/\delta\).
Here, \(r\in[1/2,1]\) is the source exponent and
\(p=1/b\in(0,1)\) is the capacity exponent, characterized
by \(\cN(\lambda)\lesssim\lambda^{-p}\).
The rate matches the classical source--capacity exponent
for i.i.d.\ KRR~\citep{caponnetto2007optimal}, with explicit
dependence on the chain in the finite-sample bounds. By incorporating source and capacity information, this
sharpens the \(T^{-1/2}\) excess-risk rate of
\citet{kachaiev2026learning}, however, under different assumptions. Our work considers only stationary initialization while relaxing uniform ergodicity in total variation to absolute spectral gap, and the
Hilbert-scale contraction described above.

\paragraph{Structure of the paper.}
Section~2 collects background on Markov operators, martingales,
integral probability metrics, and spectral gaps, together with
preliminary results used throughout the paper.
Section~\ref{sec:kernel-geometry} introduces uniform MMD
relaxation and presents the occupation measure bounds,
followed by sufficient conditions and examples based on
metric comparison and contraction in kernel-induced
Hilbert scales.
Section~\ref{sec:stats-learning} applies these results to
KRR and derives the learning guarantees.
The proofs, together with the concentration inequalities they
rely on, are collected in Appendix~A; Appendix~B contains the
Fourier computations for the torus examples, including the
verification of all assumptions of
Theorem~\ref{thm:final-sample-rate} for a noisy doubling map.

\section{Background and preliminaries}

\paragraph{Tools and notation.}
Let \( (\cX,d) \) be a Polish space endowed with Borel \( \sigma \)-algebra \( \cB(\cX) \). Let \( \cP(\cX) \) be a space of probability measures on \( (\cX, \cB(\cX)) \). In the context where \( \cX \) is a normed space, \( \cP_2(\cX) \) denotes the subset of probability measures on \( \cX \) with finite second moment, i.e., \( \int \norm{x}^2 \, \mu(dx) < \infty \). Define \( \diam(\cX):=\sup_{x,y\in\cX}d(x,y) \). 

\medskip
Let \( (X_t)_{t \ge 0} \) be a Markov process taking values in \( (\cX,\cB(\cX)) \), with associated Markov transition kernel \( P:\cX\times\cB(\cX)\to[0,1] \). Assume that \( P \) admits an invariant probability measure \( \pi\in\cP(\cX) \), that is, \( \pi P=\pi \), where, for \( \mu\in\cP(\cX) \), we write \( \mu P \) for the probability measure
\begin{equation}
\label{eq:measure-action}
(\mu P)(A)
:=\int P(x,A)\,\mu(dx),
\qquad A\in\cB(\cX).
\end{equation}
The process \( (X_t)_{t \ge 0} \) is called \emph{stationary} if \( X_0 \sim \pi \). We also use the notation \( P^n(x,\cdot):=\delta_xP^n \) for \( n \ge 0 \). Recall that for a fixed \( T > 0 \),
\begin{equation}
\wh \mu_T := \frac{1}{T}\sum_{t=0}^{T-1} \delta_{X_t}
\end{equation}
denotes empirical \emph{occupation measure}. For a measure \(\mu\) and a measurable function \(f:\cX\to\bbR\), let \(\mu(f):=\int\! f\,d\mu\).

\medskip
Let \( L^2(\pi) \) denote the Hilbert space of  square-integrable functions \( f : \cX \to \bbR \), endowed with the inner product \( \la f, g \ra_\pi := \pi(fg) \). Denote the subspace of \( \pi \)-centered functions by \( L_0^2(\pi) := \left\{ f \in L^2(\pi) : \pi(f) = 0 \right\} \). For a separable Hilbert space \( \cY \), we let \( L^2(\cX, \cB(\cX), \pi; \cY) \), abbreviated \( L^2(\pi; \cY) \), denote the Bochner space of strongly \( \cB(\cX) \)-measurable and Bochner square-integrable maps \( f : \cX \to \cY \) with
\begin{equation}
\norm{f}_\pi^2 := \norm{f}_{L^2(\pi; \cY)}^2 = \int \norm{f(x)}_\cY^2\, \pi(dx) < \infty.
\end{equation}
Denote by  \( \cL(\cH_1, \cH_2) \) be  the space of bounded linear operators between two separable Hilbert spaces \( \cH_1 \) and \( \cH_2 \), and    \( \cL(\cH_1) \) when \( \cH_1 = \cH_2 \). For \(A\in\cL(\cH_1)\), denote the operator norm by \(\opnorm{A}_{\cH_1}:=\norm{A}_{\cH_1 \to \cH_1}\). Let \( \cL_2(\cH_1, \cH_2) \) be the space of Hilbert--Schmidt operators, and  \( \cL_2(\cH_1) \) when \( \cH_1 = \cH_2 \), equipped with the norm \( \norm{A}_{\cL_2(\cH_1, \cH_2)}^2 = \sum_i \norm{A e_i}_{\cH_2}^2 \), where \( \left\{ e_i \right\}_i \) is any orthonormal basis of \( \cH_1 \). We denote by \( I_d \) the \( d \times d \) identity matrix.

\paragraph{Martingales.} Let \( (\cF_t)_{t\geq0} \) be a filtration.  A sequence of \(\cH\)-valued random variables \( (d_t)_{t\geq1} \) is called a \emph{martingale difference sequence} with respect to \( (\cF_t)_t \) if \(d_t\) is \(\cF_t\)-measurable and for every \( t \ge 1 \), \( \bbE [d_t \mid \cF_{t-1}] = 0 \). Setting \( M_0 := 0 \) and \( M_T := \sum_{t=1}^T d_t \) define an \( \cH \)-valued \emph{martingale}. Its \emph{predictable quadratic variation} is
\begin{equation}
\label{eq:quadratic-variance}
\la M \ra_T
:= \sum_{t=1}^{T} \bbE_{t-1} \! \norm{ d_t }^2,
\end{equation}
where \( \bbE_j [\cdot] := \bbE [ \cdot \mid \cF_j ] \). For background on martingales and martingale difference sequences, we refer to~\citet{hall1980martingale,Williams_1991}.

\paragraph{Integral probability metrics.}
Let \(\cF\) be a class of measurable real-valued functions on \(\cX\). For \(\mu,\nu\in\cP(\cX)\) such that the integrals below are well defined, the \emph{integral probability metric (IPM)} generated by \(\cF\) is
\begin{equation}
\label{eq:ipm-definition}
d_{\cF}(\mu,\nu)
:=\sup_{f\in\cF}\LRabs{\mu(f)-\nu(f)}.
\end{equation}
The choice of \(\cF\) determines the geometry on the space of probability measures. In particular, for the class
\begin{equation}
\cF_{\TV}
:= \left\{f:\cX\to[0,1]: f \text{ is measurable}\right\},
\end{equation}
the corresponding IPM is the total variation distance:
\begin{equation}
d_{\cF_{\TV}}(\mu,\nu)
= \sup_{A\in\cB(\cX)}\abs{\mu(A)-\nu(A)}
= \norm{\mu-\nu}_{\TV}.
\label{eq:tv-ipm-def}
\end{equation}
Suppose that \((\cX,d)\) is a bounded space, i.e., \( \diam(\cX) < \infty \). Fix \(x_0\in\cX\) and consider the anchored Lipschitz class
\begin{equation}
\cF_{\Lip(d)}
:= \left\{ f:\cX\to\bbR:
\abs{f(x)-f(y)}\leq d(x,y) \; \forall x,y\in\cX,
\; f(x_0)=0 \right\}.
\end{equation}
Since \(\cX\) is bounded, every probability measure has a finite first moment, so by the Kantorovich--Rubinstein duality~\citep[Remark~6.5]{villani2009optimal},
\begin{equation}
d_{\cF_{\Lip(d)}}(\mu,\nu)
= W_1(\mu,\nu),
\qquad \mu,\nu\in\cP(\cX).
\end{equation}
Finally, when \(\cF\) is the unit ball of a reproducing kernel Hilbert space (RKHS), the resulting IPM is the MMD~\eqref{MMD_intro}, which we will study in detail in the following section.
For a systematic treatment of IPMs, their generating classes, and the corresponding domains of probability measures, see~\citet{muller1997integral}. The Lipschitz case and its restriction to measures with finite first moment are discussed in~\citet[Chapters~5--6]{villani2009optimal}, while higher-order smooth test classes and the associated moment conditions belong to the theory of Zolotarev metrics; see \citet{zolotarev1984probability}.

\medskip
The following proposition collects elementary properties of IPMs that we will use to compare convergence bounds in different geometries and to derive bounds uniform over all initial distributions from one-step contraction.

\begin{proposition}[Comparison and uniform relaxation of IPMs]
\label{prop:ipm-comparison-relaxation}
Let \(P\) be a Markov kernel with invariant probability measure \(\pi\), and let \(\cF,\cF_1,\cF_2\) be nonempty classes of bounded measurable real-valued functions on \(\cX\). The corresponding IPMs are allowed to take the value \(+\infty\).

\begin{enumerate}
\item[\textup{(i)}]
If \(\cF_1\subseteq\Lambda\cF_2+\bbR\one\) for some \(\Lambda>0\), then
\begin{equation}
d_{\cF_1}(\mu,\nu)
\leq \Lambda d_{\cF_2}(\mu,\nu),
\qquad \mu,\nu\in\cP(\cX).
\label{eq:ipm-comparison-scaled}
\end{equation}
Consequently, if
\[
\sup_{x\in\cX}d_{\cF_2}\bigl(P^n(x,\cdot),\pi\bigr)
\leq C\theta^n,
\qquad n \ge 0,
\]
for some \(0\leq C<\infty\) and \(\theta\in[0,1)\), then
\begin{equation}
\sup_{x\in\cX}d_{\cF_1}\bigl(P^n(x,\cdot),\pi\bigr)
\leq \Lambda C\theta^n,
\qquad n \ge 0.
\label{eq:weaker-ipm-uniform-convergence}
\end{equation}

\item[\textup{(ii)}]
For every \(n \ge 0\),
\begin{equation}
\sup_{\mu\in\cP(\cX)}d_{\cF}(\mu P^n,\pi)
= \sup_{x\in\cX}d_{\cF}\bigl(P^n(x,\cdot),\pi\bigr).
\label{eq:ipm-uniform-initial-distributions}
\end{equation}

\item[\textup{(iii)}]
Suppose that
\[
R_{\cF}:=\sup_{x\in\cX}d_{\cF}(\delta_x,\pi)<\infty
\]
and, for some \(\theta\in[0,1)\),
\begin{equation}
d_{\cF}(\mu P,\pi)
\leq\theta d_{\cF}(\mu,\pi),
\qquad \mu\in\cP(\cX).
\label{eq:ipm-one-step-toward-invariant}
\end{equation}
Then
\begin{equation}
\sup_{x\in\cX}d_{\cF}\bigl(P^n(x,\cdot),\pi\bigr)
\leq R_{\cF}\theta^n,
\qquad n \ge 0.
\label{eq:ipm-contraction-uniform-relaxation}
\end{equation}
\end{enumerate}
\end{proposition}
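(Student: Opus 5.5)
The three parts are elementary, and I will prove them in order directly from the definition \eqref{eq:ipm-definition}. Throughout I use that all functions are bounded, so every integral \(\mu(f)\) is finite. Suprema may equal \(+\infty\); the inequalities are then read in \([0,\infty]\).

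For \textup{(i)}, fix \(f\in\cF_1\) and write \(f=\Lambda g+c\one\) with \(g\in\cF_2\) and \(c\in\bbR\). Since \(\mu\) and \(\nu\) are probability measures, the constant cancels:
\[
\mu(f)-\nu(f)=\Lambda\bigl(\mu(g)-\nu(g)\bigr).
\]
Hence \(\abs{\mu(f)-\nu(f)}\leq\Lambda d_{\cF_2}(\mu,\nu)\), and taking the supremum over \(f\in\cF_1\) gives \eqref{eq:ipm-comparison-scaled}. The relaxation statement \eqref{eq:weaker-ipm-uniform-convergence} then follows by applying \eqref{eq:ipm-comparison-scaled} with \(\mu=P^n(x,\cdot)\) and \(\nu=\pi\), and taking the supremum over \(x\).

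For \textup{(ii)}, the inequality "\(\geq\)" is immediate by choosing \(\mu=\delta_x\). For "\(\leq\)", fix \(\mu\) and \(f\in\cF\), and set \(P^nf(x):=\int f\,dP^n(x,\cdot)\), which is bounded and measurable. Fubini gives \((\mu P^n)(f)=\int P^nf(x)\,\mu(dx)\). Since \(\mu\) is a probability measure, \(\pi(f)=\int\pi(f)\,\mu(dx)\). Therefore
\[
\abs{(\mu P^n)(f)-\pi(f)}\leq\int\abs{P^nf(x)-\pi(f)}\,\mu(dx)\leq\sup_{x}d_{\cF}\bigl(P^n(x,\cdot),\pi\bigr),
\]
and taking the supremum over \(f\) concludes. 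The only point needing care is measurability of \(x\mapsto P^nf(x)\), which is standard for Markov kernels and bounded measurable \(f\). I will bound the integrand pointwise for each fixed \(f\) before taking any supremum, so no measurability of \(x\mapsto d_{\cF}(P^n(x,\cdot),\pi)\) is ever required.

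For \textup{(iii)}, I argue by induction using \(\delta_xP^{n}=(\delta_xP^{n-1})P\) and \eqref{eq:ipm-one-step-toward-invariant} applied with \(\mu=\delta_xP^{n-1}\in\cP(\cX)\). This yields
\[
d_{\cF}(P^n(x,\cdot),\pi)\leq\theta\,d_{\cF}(P^{n-1}(x,\cdot),\pi),
\]
so \(d_{\cF}(P^n(x,\cdot),\pi)\leq\theta^n d_{\cF}(\delta_x,\pi)\leq R_{\cF}\theta^n\). The base case \(n=0\) is exactly the definition of \(R_{\cF}\). Taking the supremum over \(x\) gives \eqref{eq:ipm-contraction-uniform-relaxation}. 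No step is a real obstacle; the mildest subtlety is the Fubini/measurability point in \textup{(ii)}.
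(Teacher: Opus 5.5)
Your proposal is correct and follows the paper's argument step for step: the constant cancels by equal total mass in (i); in (ii) you integrate the pointwise bound against \(\mu\) and take \(\mu=\delta_x\) for the reverse inequality; in (iii) you iterate the one-step contraction starting from \(\delta_x\). Your remarks on the measurability of \(x\mapsto P^nf(x)\) and on bounding pointwise for fixed \(f\) before taking suprema only make explicit what the paper leaves implicit.
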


\begin{proof}
For \textup{(i)}, every \(f\in\cF_1\) can be written as \(f=\Lambda g+c\one\), with \(g\in\cF_2\) and \(c\in\bbR\). Since probability measures have equal total mass,
\begin{equation}
\LRabs{(\mu-\nu)(f)}
=\Lambda\LRabs{(\mu-\nu)(g)}
\leq\Lambda d_{\cF_2}(\mu,\nu).
\end{equation}
Taking the supremum over \(f\in\cF_1\) proves \eqref{eq:ipm-comparison-scaled}, from which \eqref{eq:weaker-ipm-uniform-convergence} follows immediately. For \textup{(ii)}, fix \(\mu\in\cP(\cX)\) and \(f\in\cF\). By the definition of \(\mu P^n\),
\begin{equation}
\LRabs{(\mu P^n-\pi)(f)}
= \LRabs{\int \bigl(P^n(x,\cdot)-\pi\bigr)(f)\,\mu(dx)}
\leq \sup_{x\in\cX}d_{\cF}\bigl(P^n(x,\cdot),\pi\bigr).
\end{equation}
Taking the supremum over \(f\) and then over \(\mu\) gives one inequality. The reverse inequality follows by choosing \(\mu=\delta_x\). Finally, iterating \eqref{eq:ipm-one-step-toward-invariant} from \(\delta_x\) gives
\begin{equation}
d_{\cF}\bigl(P^n(x,\cdot),\pi\bigr)
\leq\theta^n d_{\cF}(\delta_x,\pi)
\leq R_{\cF}\theta^n.
\end{equation}
Taking the supremum over \(x\) proves \textup{(iii)}.
\end{proof}

\paragraph{Contraction and duality.} As usual, we use the same symbol \(P\) for the transition kernel and the associated Markov operator. More precisely, if \( \cU \) is a separable Banach space and \(f:\cX \to \cU \) is a measurable function for which the following integral is well defined, we set
\begin{equation}
\label{eq:markov-operator}
(Pf)(x)
:=\int f(x')\,P(x,dx'),
\qquad x\in\cX,
\end{equation}
where the integral is understood in the Bochner sense when \(f\) is not scalar-valued. Let \(\cB\) be a Banach space of \(\cU\)-valued measurable functions such that \(P:\cB\to\cB\) is bounded. Define the fixed-point space
\begin{equation}
\label{eq:fixed-point-decomposition}
\Fix(P):=\{f\in\cB:Pf=f\},
\end{equation}
and assume there exists a bounded projector \(\Pi:\cB\to\Fix(P)\) satisfying \(\Pi P=\Pi\). The centered space \( \cB_0:=\ker(\Pi) \) is then \(P\)-invariant. Indeed, if \(f\in \cB_0\), then \( \Pi(Pf) = \Pi f = 0 \), so \(Pf\in \cB_0\). By definition of \( \Fix(P) \), any strict norm decay can only be expressed on the complement \( \cB_0 \). Accordingly, writing \(P_0:=P|_{\cB_0}\), we assume that
\begin{equation}
\label{eq:banach-space-contraction}
\norm{P_0f}_{\cB}
\leq \rho \norm{f}_{\cB},
\qquad f\in\cB_0,
\end{equation}
for some \(\rho \in[0,1)\); and we call~\eqref{eq:banach-space-contraction} a \emph{strict contraction} of \( P \) on \( \cB_0 \). In the scalar-valued case \(\cU =\bbR\), contraction on \(\cB_0\) is equivalent to contraction of the Banach adjoint on \(\cB_0^\ast\), since \( \opnorm{P_0^\ast}_{\cB_0^\ast} = \opnorm{P_0}_{\cB_0} \). For probability measures whose restrictions define continuous functionals on \(\cB_0\), introduce the corresponding IPM
\begin{equation}
d_{\cB_0}(\mu,\nu)
:= \sup_{\substack{f\in\cB_0\\\norm{f}_{\cB}\leq1}} \LRabs{\mu(f)-\nu(f)}.
\label{eq:d-b0-sup}
\end{equation}
The identity \((\mu P)(f)=\mu(Pf)\) holds whenever the integrals are well defined. Therefore,
\begin{equation}
d_{\cB_0}(\mu P,\nu P)
= \sup_{\substack{f\in\cB_0\\\norm{f}_{\cB}\leq1}} \LRabs{(\mu-\nu)(Pf)}
\leq\sup_{\substack{g\in\cB_0\\\norm{g}_{\cB}\leq \rho}} \LRabs{(\mu-\nu)(g)}
\leq \rho d_{\cB_0}(\mu,\nu).
\label{eq:dual-ipm-contraction}
\end{equation}
Thus contraction of the Markov operator on a centered space of observables yields one-step contraction in the dual IPM. In particular, if \(\pi P=\pi\), taking \(\nu=\pi\) in \eqref{eq:dual-ipm-contraction} gives contraction toward the invariant measure.

\begin{remark}[Domain of the dual IPM]
The qualification preceding~\eqref{eq:d-b0-sup} is essential. For a general Banach space of measurable functions, a probability measure need not define a continuous functional on \(\cB_0\); in that case, the supremum in~\eqref{eq:d-b0-sup} may be infinite. The natural domain of this IPM is therefore
\begin{equation}
\cP_{\cB_0}(\cX)
:= \left\{ \mu\in\cP(\cX): f\mapsto\mu(f) \text{ defines an element of }\cB_0^\ast \right\}.
\end{equation}
For \(\mu,\nu\in\cP_{\cB_0}(\cX)\),
\begin{equation}
d_{\cB_0}(\mu,\nu)
= \norm{(\mu-\nu)|_{\cB_0}}_{\cB_0^\ast}
<\infty.
\end{equation}
A simple sufficient condition ensuring \(\cP_{\cB_0}(\cX)=\cP(\cX)\) is the continuous embedding
\begin{equation}
\norm{f}_\infty
\leq C_{\cB}\norm{f}_{\cB},
\qquad f\in\cB_0.
\end{equation}
Indeed, this gives \(\abs{\mu(f)}\leq C_{\cB}\norm{f}_{\cB}\) for every probability measure \(\mu\).
\end{remark}

\begin{remark}
By Proposition~\ref{prop:ipm-comparison-relaxation}\textup{(i)}, the inclusion \(\cF_1\subseteq\Lambda\cF_2+\bbR\one\) transfers uniform geometric convergence toward \(\pi\) from \(d_{\cF_2}\) to \(d_{\cF_1}\). Importantly, this comparison alone does not transfer one-step contraction: even if \(d_{\cF_2}\) contracts with factor \(\rho<1\) as in~\eqref{eq:dual-ipm-contraction}, it gives only
\begin{equation}
d_{\cF_1}(\mu P,\nu P)
\leq \Lambda \rho d_{\cF_2}(\mu,\nu),
\end{equation}
whose right-hand side is expressed in the stronger IPM.
\end{remark}

\paragraph{Spectral gap.}
We now specialize to the scalar-valued space \(\cB=L^2(\pi)\). By conditional Jensen's inequality and the invariance of \(\pi\),
\begin{equation}
\norm{Pf}_\pi^2
\leq \pi\bigl(P f^2\bigr)
= \pi(f^2)
= \norm{f}_\pi^2,
\qquad f\in L^2(\pi).
\end{equation}
Thus \(P\) is a bounded, non-expansive operator on \(L^2(\pi)\). Moreover, \(P\one=\one\), and the invariance of \(\pi\) implies that \(L_0^2(\pi)\) is \(P\)-invariant. In the following, let 
\begin{equation}
P_0:=P|_{L_0^2(\pi)},
\qquad \rho:=\opnorm{P_0}_{L_0^2(\pi)}.
\end{equation}
We say that \(P\) admits a positive \emph{absolute spectral gap} if
\begin{equation}
1-\rho>0.
\label{eq:spectral-gap-def}
\end{equation}
We denote by
\begin{equation}
\label{eq:tau-rel-def}
\tau_{\rel}:=\frac{1}{1-\rho}
\end{equation}
the corresponding \emph{relaxation time}, which equals \(1\) when \(\rho=0\) and diverges as \(\rho\uparrow1\).

\begin{remark}
Condition~\eqref{eq:spectral-gap-def} automatically implies \(\Fix(P)=\Span\{\one\}\). Indeed, if \(Pf=f\), then \(g:=f-\pi(f)\one\in L_0^2(\pi)\) satisfies \(Pg=g\), and hence
\begin{equation}
\norm{g}_\pi
=\norm{P_0g}_\pi
\leq \rho\norm{g}_\pi.
\end{equation}
Since \(\rho<1\), it follows that \(g=0\). Consequently, in the notation introduced above,
\begin{equation}
\Pi f=\pi(f)\one,
\qquad \cB_0=L_0^2(\pi).
\end{equation}
\end{remark}

\begin{remark}[On terminology]
Throughout the paper, the term \emph{relaxation} is used in the statistical-physics sense: a system displaced from equilibrium \emph{relaxes} back to it, and the relaxation time is the characteristic timescale of that return. For Markov chains, the term is standard~\citep[Chapter~12]{levin2017markov} for describing the action of \( P \) on centered functions \( f \), as in~\eqref{eq:banach-space-contraction}: any fluctuation around \( \pi(f) \) is damped at a geometrical rate. We reserve \emph{contraction} for the one-step estimate itself and \emph{relaxation} for the resulting geometric decay over \( n \) steps.
\end{remark}

\begin{remark}[On reversibility]
We say Markov operator \( P \) is \( \pi \)-\emph{reversible} if it is self-adjoint on \( L^2(\pi) \), that is, \( \langle f, Pg \rangle_\pi = \langle Pf, g \rangle_\pi \) for \( f,g \in L^2(\pi) \). In such case, the operator \( P_0 \) is also self-adjoint, so \( \sigma(P_0) \subset [-1,1] \) and \( \opnorm{P_0} = \sup\{ \abs{\lambda} : \lambda \in \sigma(P_0) \} \); hence \( 1 - \rho \) is the classical absolute spectral gap.
Without reversibility, \( P_0 \) need not be normal and its spectrum may contain non-real points. Moreover, \( \opnorm{P_0} \) is only bounded below by the spectral radius \( r(P_0) = \lim_{n} \opnorm{P_0^n}^{1/n} \), so \( 1 - \rho \) is a possibly smaller operator-norm gap. We keep the name, following~\citet{fan2021hoeffding}.
Complex scalars play no further role: our arguments use only operator norms, never the spectrum, and for a bounded operator \( A \) on a real Hilbert space it holds that \( \opnorm{A_\bbC} = \opnorm{A} \). We refer to~\citet[Chapter~12]{levin2017markov} for the classical reversible setting and its relation to the relaxation time.
\end{remark}
 
\begin{remark}
It may be tempting to interpret the spectral gap as contraction in the IPM dual to \(L_0^2(\pi)\). This IPM, however, has a restricted domain: if \(\mu,\nu\ll\pi\) have \(L^2(\pi)\)-densities, then
\begin{equation}
d_{2,\pi}(\mu,\nu)
:=\sup_{\substack{f\in L_0^2(\pi)\\ \norm{f}_\pi\leq1}} \LRabs{\mu(f)-\nu(f)}
= \LRnorm{\frac{d\mu}{d\pi}-\frac{d\nu}{d\pi}}_\pi.
\end{equation}
In particular,
\begin{equation}
d_{2,\pi}(\mu,\pi)^2
= \int\left(\frac{d\mu}{d\pi}-1\right)^2d\pi
=: \chi^2(\mu\Vert\pi),
\end{equation}
is the so-called \( \chi^2 \)-divergence. This geometry does not generally allow to  include Dirac measures and is therefore unsuitable for measuring convergence from arbitrary initializations. For this reason, the \( L^2(\pi) \) spectral gap enters our analysis only as a decay bound
\begin{equation}
\norm{P^n g}_\pi
\leq \rho^n\norm{g}_\pi,
\qquad g\in L_0^2(\pi),
\end{equation}
and not as a contraction in the \( L^2(\pi) \)-induced geometry on measures.
\end{remark}

\medskip
Further, we will need the  spectral gap to extend to the Bochner space \(L_0^2(\pi;\cH)\); the following proposition shows that it preserves the gap of the original space \( L^2(\pi) \).

\begin{proposition}
\label{prop:gap-transfer-on-bochner}
Let \(\cH\) be a separable Hilbert space, and let \(P_{\cH}:L^2(\pi;\cH)\to L^2(\pi;\cH)\) denote the Markov operator defined by~\eqref{eq:markov-operator}, with the integral understood in the Bochner sense. Then \(P_{\cH}\) leaves \(L_0^2(\pi;\cH)\) invariant, and
\[
\opnorm{P_{\cH}}_{L_0^2(\pi;\cH)}
= \opnorm{P_0}_{L_0^2(\pi)}.
\]
\end{proposition}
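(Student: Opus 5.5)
We identify \(L^2(\pi;\cH)\) with the Hilbert tensor product \(L^2(\pi)\otimes\cH\) and show that \(P_{\cH}\) acts as \(P\otimes I_{\cH}\). The norm identity then follows from the fact that tensoring with the identity preserves operator norms. To avoid abstract tensor-product machinery, we work in coordinates.

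Fix an orthonormal basis \(\{e_i\}_i\) of the separable space \(\cH\). For \(f\in L^2(\pi;\cH)\), set \(f_i:=\la f(\cdot),e_i\ra_{\cH}\in L^2(\pi)\). Parseval and monotone convergence give \(\norm{f}_\pi^2=\sum_i\norm{f_i}_\pi^2\). Since bounded linear functionals commute with Bochner integrals,
\[
\la (P_{\cH}f)(x),e_i\ra_{\cH}=\int \la f(x'),e_i\ra_{\cH}\,P(x,dx')=(Pf_i)(x)
\]
for \(\pi\)-a.e.\ \(x\). The Bochner integral is well defined \(\pi\)-a.e. because \(\pi P=\pi\) gives \(\int\!\int\norm{f(x')}\,P(x,dx')\,\pi(dx)=\int\norm{f}\,d\pi<\infty\). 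Similarly, \(\pi(f)\) is a Bochner integral, so \(f\in L_0^2(\pi;\cH)\) if and only if \(\pi(f_i)=0\) for every \(i\). Invariance of \(L_0^2(\pi;\cH)\) then follows from invariance of \(L_0^2(\pi)\) under \(P\), or directly from \(\pi(P_{\cH}f)=(\pi P)(f)=\pi(f)=0\).

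For the upper bound, let \(f\in L_0^2(\pi;\cH)\), so each \(f_i\in L_0^2(\pi)\). Then
\[
\norm{P_{\cH}f}_\pi^2=\sum_i\norm{Pf_i}_\pi^2\le\rho^2\sum_i\norm{f_i}_\pi^2=\rho^2\norm{f}_\pi^2 .
\]
For the lower bound, take \(g\in L_0^2(\pi)\) and set \(f:=g\,e_1\). Then \(f\in L_0^2(\pi;\cH)\), \(\norm{f}_\pi=\norm{g}_\pi\), and \(P_{\cH}f=(Pg)e_1\). Hence \(\norm{P_{\cH}f}_\pi=\norm{Pg}_\pi\), and taking the supremum over \(g\) gives equality of the two norms. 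If \(\cH=\{0\}\), the statement is trivial and this case can be set aside.

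The main obstacle is the measurability and integrability bookkeeping. One must check that \(x\mapsto (P_{\cH}f)(x)\) is strongly measurable and lies in \(L^2(\pi;\cH)\), and that exchanging coordinates with the Bochner integral is valid. Strong measurability follows because \(\cH\) is separable and each coordinate \(x\mapsto(Pf_i)(x)\) is measurable, so Pettis' theorem applies. Square integrability follows from the coordinate identity \(\norm{P_{\cH}f}_\pi^2=\sum_i\norm{Pf_i}_\pi^2\le\norm{f}_\pi^2\). Alternatively, it follows from Jensen's inequality for Bochner integrals, \(\norm{(P_{\cH}f)(x)}^2\le P(\norm{f}^2)(x)\), combined with invariance of \(\pi\).
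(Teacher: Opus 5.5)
Your proof is correct and follows the paper's route. The paper identifies \(L_0^2(\pi;\cH)\simeq L_0^2(\pi)\otimes\cH\), writes the restriction of \(P_{\cH}\) as \(P_0\otimes I_{\cH}\), and invokes \(\opnorm{P_0\otimes I_{\cH}}=\opnorm{P_0}\); your orthonormal-basis computation (Parseval for the upper bound, test functions \(g\,e_1\) for the lower bound) is exactly the concrete proof of that identity, and it also supplies the measurability and a.e.\ well-definedness details the paper leaves implicit. One small correction: the case \(\cH=\{0\}\) is not trivially true but must be excluded, since there the left-hand side is \(0\) while \(\rho\) may be positive (the paper tacitly assumes \(\cH\neq\{0\}\) as well).
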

\begin{proof}
By \(\pi\)-invariance, for any \( g \in L_0^2(\pi; \cH) \), it holds that \( \pi(P_\cH g) = \pi(g) \), so \(L_0^2(\pi;\cH)\) is invariant under \(P_{\cH}\). Using the canonical identification \( L_0^2(\pi;\cH) \simeq L_0^2(\pi) \otimes\cH \), the restriction of \(P_{\cH}\) to \(L_0^2(\pi;\cH)\) is \( P_0\otimes I_{\cH} \). Hence, by the standard tensor-product norm identity,
\begin{equation}
\opnorm{P_{\cH}}_{L_0^2(\pi;\cH)}
= \opnorm{P_0\otimes I_{\cH}}_{L_0^2(\pi;\cH)}
= \opnorm{P_0}_{L_0^2(\pi)},
\end{equation}
as claimed.
\end{proof}
\section{Convergence of occupation measure}
\label{sec:kernel-geometry}

In this section, we study properties of Markov chains that contract in the kernel-induced geometry, i.e., with respect to the \emph{maximum mean discrepancy} (MMD) divergence. For the purpose of the learning analysis, this framework will be used primarily as a technical tool within a specific geometric setting. Nevertheless, we emphasize that the high-probability bound presented here for the convergence of the occupation measure \( \wh \mu_T \) to the invariant measure \( \pi \) is, to the best of our knowledge, novel and of independent interest.

\subsection{Uniform relaxation in MMD}

\paragraph{Maximum mean discrepancy.}
Let \(\cH\) be a separable Hilbert space and let \(\phi:\cX\to\cH\) be a measurable feature map. A standard construction starts from a positive-definite kernel \(k:\cX \times \cX \to \bbR\) with associated separable reproducing kernel Hilbert space (RKHS) \(\cH_k\). In this case, one typically takes \(\cH=\cH_k\) and uses the \emph{canonical feature map}
\begin{equation}
\phi(x):=k(x,\cdot),
\quad k(x,y)=\la\phi(x),\phi(y)\ra_{\cH_k},
\qquad x, y \in \cX.
\label{eq:canonical-map-def}
\end{equation}
Define the linear feature operator \(S_\phi \) by
\begin{equation}
(S_\phi h)(x)
:=\la h,\phi(x)\ra_\cH,
\qquad h\in\cH, \; x\in\cX.
\label{eq:s-phi-def}
\end{equation}
Without loss of generality, we may replace \(\cH\) by
\begin{equation}
\cH_\phi
:=\overline{\Span}\{\phi(x):x\in\cX\},
\end{equation}
since \(\ker(S_\phi)=\cH_\phi^\perp\). With this convention, the representation \(h\mapsto S_\phi h\) is injective. The test-function class induced by \(\phi\) is
\begin{equation}
\cF_\phi
:=\left\{S_\phi h: h \in \cH, \, \norm{h}_\cH\leq1\right\}.
\end{equation}
The associated \emph{maximum mean discrepancy} (MMD) is the integral probability metric
\begin{equation}
\MMD_\phi(\mu,\nu)
:=d_{\cF_\phi}(\mu,\nu).
\end{equation}
For any probability measure \(\mu\) such that \(\phi\) is Bochner integrable, define its \emph{mean embedding}~\citep{smola2007hilbert} by \( m_\phi(\mu) := \mu(\phi) \). In particular, the mean embedding is well defined for every \(\mu\in\cP(\cX)\) whenever \(\phi\) is uniformly bounded, i.e.,
\begin{equation}
\label{eq:bounded-feature}
\sup_{x\in\cX}\norm{\phi(x)}_{\cH} \le \kappa < \infty.
\end{equation}
For any \(\mu, \nu \in \cP(\cX) \) whose mean embeddings are well defined, the fact that
the Bochner integral commutes with bounded linear functionals, together with
Hilbert-space duality, gives
\begin{align}
\MMD_\phi(\mu,\nu)
&= \sup_{\norm{h}_\cH\leq1} \LRabs{\int (S_\phi h)(x)\,(\mu-\nu)(dx)} \\
&= \sup_{\norm{h}_\cH\leq1} \LRabs{\la h,m_\phi(\mu)-m_\phi(\nu)\ra_\cH} \\
&= \norm{m_\phi(\mu)-m_\phi(\nu)}_\cH.
\end{align}
For the canonical feature map, this recovers the usual kernel MMD. If \(k\) is \emph{characteristic}~\citep{gretton2012kernel}, its mean embedding is injective and the corresponding MMD is a metric on \(\cP(\cX)\). Under additional regularity conditions on \(k\) and \(\cX\), it also metrizes weak convergence; see, e.g.,~\citet{simon2023metrizing}.

\medskip
The following observation will be used to compare MMDs under centering and bounded linear transformations of feature maps.

\begin{proposition}[Affine transformations of feature maps]
\label{prop:mmd-affine-transformation}
Let \(\cH,\wt\cH\) be separable Hilbert spaces, let \(\phi:\cX\to\cH\) be measurable, and define
\[
\wt\phi(x):=A\phi(x)+b,
\]
where \(A:\cH\to\wt\cH\) is a bounded linear map and \(b\in\wt\cH\). For any \(\mu,\nu\in\cP(\cX)\) with respect to which \(\phi\) is Bochner integrable,
\begin{equation}
\MMD_{\wt\phi}(\mu,\nu)
\leq \norm{A}_{\cH\to\wt\cH}\, \MMD_\phi(\mu,\nu).
\label{eq:mmd-affine-comparison}
\end{equation}
In particular, translations of the feature map leave MMD unchanged.
\end{proposition}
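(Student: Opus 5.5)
The plan is to reduce the statement to the mean-embedding identity derived just above, namely \(\MMD_\phi(\mu,\nu)=\norm{m_\phi(\mu)-m_\phi(\nu)}_\cH\). First I will check that \(\wt\phi\) is Bochner integrable with respect to \(\mu\) and \(\nu\), so that the same identity holds for \(\wt\phi\). The map \(\wt\phi=A\circ\phi+b\) is strongly measurable, since \(A\) is continuous and \(\phi\) is strongly measurable. It also satisfies
\[
\norm{\wt\phi(x)}_{\wt\cH}\leq\norm{A}\,\norm{\phi(x)}_\cH+\norm{b}_{\wt\cH},
\]
and this bound is integrable because \(\phi\) is Bochner integrable and \(\mu,\nu\) are probability measures.

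The key step is to compute the mean embedding of \(\wt\phi\). Bounded linear maps commute with Bochner integrals, and constants integrate to themselves against a probability measure. Hence
\[
m_{\wt\phi}(\mu)=\int \bigl(A\phi(x)+b\bigr)\,\mu(dx)=A\,m_\phi(\mu)+b,
\]
and the same holds for \(\nu\). Subtracting the two, the translation \(b\) cancels:
\[
m_{\wt\phi}(\mu)-m_{\wt\phi}(\nu)=A\bigl(m_\phi(\mu)-m_\phi(\nu)\bigr).
\]

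Taking norms and applying the operator-norm bound gives \eqref{eq:mmd-affine-comparison}. For the final claim, take \(A=I\) and \(\wt\cH=\cH\). Then the difference of embeddings is exactly the same as for \(\phi\), so translating the feature map leaves the MMD unchanged, with equality rather than only an inequality.

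I do not expect a real obstacle. The only point requiring care is the integrability and measurability check, which justifies using the embedding formula for \(\wt\phi\) rather than the supremum definition. As an alternative route through Proposition~\ref{prop:ipm-comparison-relaxation}(i), one can show directly that \(\cF_{\wt\phi}\subseteq\norm{A}\cF_\phi+\bbR\one\). Indeed, \(\la \tilde h,A\phi(x)+b\ra=\la A^*\tilde h,\phi(x)\ra+\la\tilde h,b\ra\) and \(\norm{A^*\tilde h}\leq\norm{A}\). This route requires the test functions to be bounded, so I will use the embedding argument as the main proof.
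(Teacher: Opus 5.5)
Your proof is correct and follows essentially the same route as the paper: you commute \(A\) with the Bochner integral to get \(m_{\wt\phi}(\mu)=A\,m_\phi(\mu)+b\), subtract the identity for \(\nu\), take norms, and set \(A=I\) for translation invariance. Your explicit check that \(\wt\phi\) is strongly measurable and Bochner integrable is left implicit in the paper, and it is what justifies applying the mean-embedding formula for \(\MMD_{\wt\phi}\).
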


\begin{proof}
Bounded linear operators commute with Bochner integration, so
\begin{equation}
m_{\wt\phi}(\mu)=A m_\phi(\mu)+b.
\end{equation}
Subtracting the corresponding identity for \(\nu\) and taking norms proves~\eqref{eq:mmd-affine-comparison}. Taking \(A=I_\cH\) gives translation invariance.
\end{proof}

\medskip
We begin by stating the minimal assumptions required to establish the main result of this section, followed by a discussion of the conditions under which these assumptions are satisfied.

\begin{assumption}[Uniform geometric MMD relaxation]
\label{ass:uniform-mmd-ergodicity}
There exist constants \( 0 \le C < \infty  \) and \( \theta \in [0, 1) \), such that
\begin{equation}
\sup_{x \in \cX} \MMD_\phi(P^n(x, \cdot), \pi) \leq C \theta^n,
\quad n \ge 0.
\end{equation}
Throughout, we fix constants \(C\) and \(\theta\) satisfying this
assumption and write
\begin{equation}
\tau_\theta :=  C / (1-\theta).
\end{equation}
\end{assumption}

\medskip
A few observations regarding the stated assumption could be made. By Proposition~\ref{prop:ipm-comparison-relaxation}\textup{(ii)}, Assumption~\ref{ass:uniform-mmd-ergodicity} is equivalent to
\begin{equation}
\sup_{\mu_0\in\cP(\cX)}
\MMD_\phi(\mu_0P^n,\pi)
\leq C\theta^n,
\qquad n \ge 0.
\label{eq:mu-0-cond}
\end{equation}
Consequently, one immediately gets that for every \( \mu \in \cP(\cX)\),
\begin{equation}
\MMD_\phi(\mu P^n, \pi) \to 0 \quad \text{as} \; n \to \infty.
\end{equation}
This also gives uniqueness of the invariant measure, provided that the mean embedding map \( m_\phi: \cP(\cX) \to \cH \) is injective; which holds, for example, when the kernel \(k(x,y):=\la\phi(x),\phi(y)\ra_\cH\) is characteristic. Indeed, if \( \nu \in \cP(\cX) \) is another invariant probability measure, then \( \nu P^n = \nu \) for every \( n \ge 0 \). Therefore,
\begin{equation}
\MMD_\phi(\nu P^n, \pi) = \MMD_\phi(\nu, \pi) \leq C \theta^n \xrightarrow[]{n \to \infty} 0.
\end{equation}
By injectivity, it follows that \( \nu = \pi \).

\medskip
The constant \( C \) encodes important property of the chosen geometry. Indeed, applying the condition with \( n = 0 \),
\begin{equation}
\sup_{x \in \cX} \MMD_\phi(\delta_x, \pi) = \sup_{x \in \cX} \norm{ \phi(x) - \pi \phi }_\cH \leq C,
\label{eq:relaxation-mmd-c}
\end{equation}
where we used \( m_\phi(\delta_x) = \phi(x) \). In this sense, \( C \) controls the diameter of the image of \( \cX \) in \( \cH \). In particular, this requirement holds for uniformly bounded \( \phi \) as in~\eqref{eq:bounded-feature}, then one may take \(C=2\kappa\). The same conclusion holds for a canonical feature map of a Mercer kernel on a compact space \(\cX\), since the continuity of \(k\) makes \(x\mapsto k(x,x)\) (effectively) bounded.

\medskip
For a uniformly bounded feature map \( \phi \), Assumption~\ref{ass:uniform-mmd-ergodicity} follows from the stronger one-step contraction toward the invariant measure condition: 
\begin{equation}
\label{eq:mmd-one-step-contraction}
\MMD_\phi(\mu P,\pi)
\leq \theta \, \MMD_\phi(\mu,\pi),
\qquad \forall \mu \in \cP(\cX).
\end{equation}
By Proposition~\ref{prop:ipm-comparison-relaxation}\textup{(iii)},
Condition~\eqref{eq:mmd-one-step-contraction} implies
Assumption~\ref{ass:uniform-mmd-ergodicity} with the same rate
\(\theta\) and \(C=2\kappa\), since
\begin{equation}
\label{eq:mmd-one-step-contraction-kappa}
R_{\cF_\phi}
=\sup_{x\in\cX}\norm{\phi(x)-\pi\phi}_\cH
\leq 2\kappa.
\end{equation}

\subsection{Main results on convergence rate}

We now present the main result of this section, concerning the rate of convergence of the occupation measure \( \wh  \mu_T \) towards the invariant measure \( \pi \).

\begin{theorem}[Empirical occupation measure convergence in MMD]
\label{thm:mmd-occupation-measure-whp}
Let \((X_t)_{t \ge 0}\) be a stationary Markov process with invariant measure \( \pi \), and assume that \(P\) admits an absolute \(L^2(\pi)\) spectral gap \(1 - \rho > 0\). 
Suppose Assumption~\ref{ass:uniform-mmd-ergodicity} hold with constants \(C < \infty\) and \( \theta \in [0, 1) \). 
Then, for every \(T\ge 1\) and every \(\delta\in(0,1)\), with probability at least \(1-\delta\),
\[
\MMD_\phi(\wh \mu_T,\pi)
\leq 2 \norm{\phi}_\pi \sqrt{\frac{\tau_\rel  \log(4/\delta)}{T}}
+ \frac{12 \tau_\theta \sqrt{\tau_\rel} \log(4/\delta)}{T},
\]
where \( \tau_\rel \) is defined in~\eqref{eq:tau-rel-def}.
\end{theorem}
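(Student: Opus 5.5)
We propose to prove the bound with a Poisson-equation/martingale decomposition carried out in the Bochner space \(L^2(\pi;\cH)\), followed by a vector-valued Bernstein (Pinelis-type) inequality for Hilbert-space martingales.

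\emph{Step 1: the Poisson solution.} Set \(g:=\phi-\pi\phi\in L_0^2(\pi;\cH)\). We write
\[
\MMD_\phi(\wh\mu_T,\pi)=\Bnorm{\tfrac1T\textstyle\sum_{t=0}^{T-1}g(X_t)}_\cH .
\]
Define \(u:=\sum_{n\geq0}P^ng\). Two bounds on \(u\) are available, and we will use both.
\begin{itemize}
\item By Proposition~\ref{prop:gap-transfer-on-bochner}, \(\norm{P^ng}_\pi\leq\rho^n\norm{g}_\pi\), so the series converges in \(L^2(\pi;\cH)\) with \(\norm{u}_\pi\leq\tau_\rel\norm{g}_\pi\leq\tau_\rel\norm{\phi}_\pi\).
\item Pointwise, \((P^ng)(x)=m_\phi(P^n(x,\cdot))-m_\phi(\pi)\). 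Assumption~\ref{ass:uniform-mmd-ergodicity} then gives \(\norm{P^ng(x)}_\cH\leq C\theta^n\), hence \(\sup_x\norm{u(x)}_\cH\leq\tau_\theta\).
\end{itemize}
Since \(u-Pu=g\), we obtain
\[
\sum_{t=0}^{T-1}g(X_t)=\sum_{t=1}^{T}d_t+u(X_0)-u(X_T),\qquad d_t:=u(X_t)-Pu(X_{t-1}).
\]
Here \((d_t)\) is an \(\cH\)-valued martingale difference sequence with respect to the natural filtration. The boundary term is at most \(2\tau_\theta/T\) after division by \(T\), which is absorbed into the second term of the bound.

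\emph{Step 2: variance and range.} The increments are bounded: \(\norm{d_t}\leq 2\tau_\theta\). The obstacle is the predictable quadratic variation
\[
\la M\ra_T=\sum_t V(X_{t-1}),\qquad V(x):=P\norm{u}^2(x)-\norm{Pu(x)}^2,
\]
which is random. A naive pointwise bound would give \(\tau_\theta^2\) in the leading term, but the theorem requires \(\norm{\phi}_\pi^2\tau_\rel\). We therefore split \(\la M\ra_T\) into its mean and a fluctuation.
\begin{itemize}
\item \emph{Mean.} By stationarity, \(\bbE\la M\ra_T=T\,\pi(V)=T(\norm{u}_\pi^2-\norm{Pu}_\pi^2)\). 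A cruder estimate \(\norm{u}_\pi^2\leq\tau_\rel^2\norm{\phi}_\pi^2\) would lose a factor \(\tau_\rel\). Instead we use the identity \(\norm{u}_\pi^2-\norm{Pu}_\pi^2=\la g,u+Pu\ra_\pi\leq 2\norm{g}_\pi\norm{u}_\pi\), which gives \(\pi(V)\leq 2\tau_\rel\norm{\phi}_\pi^2\).
\item \emph{Fluctuation.} The function \(V\) is scalar, bounded by \(\tau_\theta^2\), and satisfies \(\pi(V)\lesssim\tau_\rel\norm{\phi}_\pi^2\). Its variance under \(\pi\) is therefore at most \(\tau_\theta^2\pi(V)\). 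A Bernstein inequality for spectral-gap chains (e.g.\ that of Fan et al., or a Paulin-type bound) gives, with probability \(1-\delta/2\),
\[
\la M\ra_T\lesssim T\tau_\rel\norm{\phi}_\pi^2+\tau_\theta^2\tau_\rel\log(4/\delta).
\]
\end{itemize}

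\emph{Step 3: concentration.} We apply Pinelis' Bernstein inequality for Hilbert-space martingales on the event where \(\la M\ra_T\leq \sigma^2\), with increment range \(2\tau_\theta\). This gives
\[
\norm{M_T}\lesssim\sqrt{\sigma^2\log(4/\delta)}+\tau_\theta\log(4/\delta).
\]
Substituting \(\sigma^2\) from Step 2 and using \(\sqrt{a+b}\leq\sqrt a+\sqrt b\), the cross term becomes \(\tau_\theta\sqrt{\tau_\rel}\log(4/\delta)\). A union bound over the two events, each of probability \(\delta/2\), together with division by \(T\), yields the stated form. Tracking constants should produce \(2\) and \(12\), with \(\tau_\rel\geq1\) used to absorb the lower-order terms.

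The main obstacle is Step 2. The leading term must carry the factor \(\norm{\phi}_\pi\sqrt{\tau_\rel}\) rather than \(\tau_\rel\) or \(\tau_\theta\), so the variance identity \(\la g,u+Pu\ra_\pi\) and the scalar Bernstein control of \(\sum_t V(X_{t-1})\) under the spectral gap must both be applied carefully.
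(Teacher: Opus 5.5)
Your proposal is correct and follows essentially the same route as the paper. The shared steps are the Poisson solution bounded by $\tau_\theta$, the martingale decomposition with boundary term $u(X_0)-u(X_T)$, the identity $\pi(V_u)=\langle \phi-\pi\phi,\,u+Pu\rangle_\pi$, the stationary Bernstein inequality (Theorem~\ref{thm:jiang-bernstein-stationary}) applied to $V_u-\pi(V_u)$, and the localized Pinelis--Bernstein inequality (Theorem~\ref{thm:pinelis-bernstein}, which the paper justifies by predictable truncation), combined by a union bound.

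Two small points. Your bound $\norm{V_u}_\infty\le\tau_\theta^2$ is slightly sharper than the paper's $4\tau_\theta^2$. To get exactly the constant $2$ in the leading term, absorb the Bernstein cross term by completing the square, $V_T\le 2\tau_\rel\bigl(\sqrt{T}\norm{\phi}_\pi+c\,\tau_\theta\sqrt{\log(4/\delta)}\bigr)^2$ with a numerical constant $c$; AM--GM or $\sqrt{a+b+c}\le\sqrt a+\sqrt b+\sqrt c$ would inflate that constant or introduce a spurious $T^{-3/4}$ term.
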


\begin{remark}[Centered feature scale]
Recall that \( \norm{\phi}_\pi = \left( \int \norm{\phi(x)}_\cH^2 \, \pi(dx) \right)^{1/2} \).
The leading term can be sharpened by replacing \(\norm{\phi}_\pi\) with the centered quantity \(\norm{\phi-\pi\phi}_\pi\). Indeed, the proof directly uses \(\norm{\phi-\pi\phi}_\pi\leq\norm{\phi}_\pi\) as a simplification. This sharpened version is invariant under translations of the feature map \(\phi \mapsto \phi+h\), as is the MMD itself. We retain \(\norm{\phi}_\pi\) in the statement for simplicity. This separation will be crucial later for deriving fast rates for statistical learning context in  Section~\ref{sec:stats-learning}. 
\end{remark}

\begin{remark}[Rate for uniformly bounded feature map]
If the feature map is uniformly bounded as in~\eqref{eq:bounded-feature}, and the one-step contraction~\eqref{eq:mmd-one-step-contraction} holds, then Assumption~\ref{ass:uniform-mmd-ergodicity} holds with \(C=2\kappa\) by~\eqref{eq:mmd-one-step-contraction-kappa}, and Theorem~\ref{thm:mmd-occupation-measure-whp} gives, for every \(T\ge1\) and \(\delta\in(0,1)\), with probability at least \(1-\delta\),
\begin{equation}
\MMD_\phi(\widehat\mu_T,\pi)
\le 2\norm{\phi}_\pi\sqrt{\frac{\tau_{\rel}\log(4/\delta)}{T}}
+\frac{24\kappa \sqrt{\tau_{\rel}}\,\log(4/\delta)}{(1-\theta) T}.
\end{equation}
Thus, under~\eqref{eq:mmd-one-step-contraction}, the uniform bound \(\kappa\) enters only the lower-order term, of order \(T^{-1}\), while the leading fluctuation term is governed by the norm \(\norm{\phi}_\pi\) rather than by the worst-case bound. Then, the leading term in the bound depends on the average size of the feature map under \(\pi\), and the uniform bound controls only the non-asymptotic correction.
\end{remark}

\begin{proof}[Sketch of the proof]
Set \(g:=\phi-\pi\phi\). Assumption~\ref{ass:uniform-mmd-ergodicity} gives a solution \(u:=\sum_{n\geq0}P^ng\) of \(u-Pu=g\), with \(\norm{u}_\infty\leq\tau_\theta\). Consequently,
\begin{equation}
\label{mart_dec}
\sum_{t=0}^{T-1}g(X_t)
= M_T+u(X_0)-u(X_T),
\qquad M_T
:=\sum_{t=1}^T\bigl(u(X_t)-Pu(X_{t-1})\bigr)
\end{equation}
is a martingale decomposition with uniformly bounded increments and boundary term. For the conditional variance function \(V_u:=P\norm{u}_\cH^2-\norm{Pu}_\cH^2\), the spectral gap gives
\begin{equation}
\pi(V_u)
\leq\frac{1+\rho}{1-\rho}\norm{\phi-\pi\phi}_\pi^2.
\end{equation}
We first apply the stationary Bernstein inequality to \(\sum_{t=0}^{T-1}V_u(X_t)\), then the localized Pinelis--Bernstein inequality to \(M_T\). Dividing by \(T\) and adding the boundary contribution gives the stated bound. The full proof is given in Appendix~\ref{app:occupation-stationary}.
\end{proof}

We now discuss the result presented here together with a few simple but important corollaries.

\paragraph{Discussion.}
Suppressing universal constants, but retaining the dependence on the confidence level, Theorem~\ref{thm:mmd-occupation-measure-whp} gives
\begin{equation}
\MMD_\phi(\wh\mu_T,\pi)
\lesssim \underbrace{ \norm{\phi}_\pi \sqrt{\frac{\tau_\rel \log(4/\delta)}{T}} }_{\text{variance term}}
+ \underbrace{\frac{ \tau_\theta \sqrt{\tau_\rel} \log(4/\delta)}{T} }_{\text{linear correction}}.
\label{eq:mmd-occupation-discussion}
\end{equation}
The leading term is governed by the absolute \(L^2(\pi)\) spectral gap through the corresponding relaxation time \( \tau_\rel \).
Equivalently, introducing the \emph{effective sample size}
\begin{equation}
T_\eff
:=(1-\rho) \, T
=\frac{T}{\tau_\rel},
\label{eq:t-eff-def}
\end{equation}
the leading term takes the central-limit form \( \norm{\phi}_\pi T_\eff^{-1/2} \). Thus, in the geometry induced by \(\phi\), a trajectory of length \(T\) behaves at the variance level as \(T_\eff\) independent observations. With a spectral gap \( 1 - \rho > 0 \) fixed, the dependence \(\sqrt{\tau_\rel/T}\) is generally unimprovable. Indeed, for a stationary reversible chain and an eigenfunction \(Pf=\rho f\), with \(\pi f=0\) and \(0\leq\rho<1\), stationarity gives
\begin{equation}
\Cov_\pi\bigl(f(X_0),f(X_k)\bigr)
=\la f,P^k f\ra_\pi
=\rho^k\norm{f}_\pi^2.
\end{equation}
Consequently,
\begin{align}
\Var_\pi\!\left(\frac{1}{T}\sum_{t=0}^{T-1}f(X_t)\right)
&=\frac{\norm{f}_\pi^2}{T^2} \left( T+2\sum_{k=1}^{T-1}(T-k)\rho^k \right)
\sim \frac{1+\rho}{1-\rho}\, \frac{\norm{f}_\pi^2}{T}.
\end{align}
This is the eigenfunction case of the Kipnis--Varadhan spectral formula for additive functionals of reversible Markov chains; see \citet{kipnis1986central}. Thus, for chains with nonnegative eigenvalues approaching \(\rho\), the standard deviation is of order \(\sqrt{\tau_\rel/T}\). In this precise sense, our leading dependence on the relaxation time is at the correct covariance scale.

\medskip
The uniform geometric \( \MMD \) relaxation assumption plays a complementary role. It controls not the rate at which the chain forgets its initial condition, but the \emph{total displacement}, measured in the feature space, that this forgetting costs.
Recall the Poisson solution \( u := (I -P)^{-1} (\phi - \pi \phi) \) from the
sketch of the proof: started from the least favorable \( x \in \cX \), the deviations of \( \bbE[\phi(X_n) \mid X_0 = x] \) from \( \pi\phi \) in \( \cH \) accumulate over the entire trajectory to at most \( \norm{u}_\infty \leq \tau_\theta \).

\medskip
The final \(T^{-1}\) term collects three contributions: concentration of the predictable quadratic variation~\eqref{eq:quadratic-variance}, the bounded-increment correction in the Pinelis--Bernstein inequality, and the boundary term in the martingale decomposition, see~\eqref{mart_dec}.
For a fixed confidence level and a fixed feature geometry, the linear correction is therefore asymptotically negligible.
The variance term dominates once
\begin{equation}
T \gtrsim
\frac{\tau_\theta^2 \log(4/\delta)}{\norm{\phi}_\pi^2}.
\end{equation}
Before this threshold, or at very high confidence levels, the linear correction may be significant.

\paragraph{Dependence on dimensionality.}
The bound does not involve the dimension of \(\cX\) explicitly; dimension enters through \(\norm{\phi}_\pi^2 = \Tr\Sigma\), where \(\Sigma := \int\!\phi\otimes\phi\,d\pi\). It is trace class under Assumption~\ref{ass:uniform-mmd-ergodicity} by~\eqref{eq:relaxation-mmd-c}, with \(\Tr\Sigma\le\kappa^2\) for bounded features. For instance, the (unbounded, purely illustrative) identity feature map on \(\bbR^d\) with \(\pi=\cN(0,I_d)\) gives \(\Sigma=I_d\) and \(\norm{\phi}_\pi^2=d\).

\paragraph{Comparison with related results.}
High-probability MMD bounds under weak dependence were used by \citet{cherief2022finite} to analyze minimum-MMD estimators. The closest direct comparison is \citet{wolfer2025variance}, who study empirical kernel mean embeddings of stationary mixing sequences. Under \(\varphi\)-mixing~\citep{doukhan1994mixing}, their variance-aware bound has the schematic form
\begin{equation}
\MMD_\phi(\wh\mu_T,\pi)
\lesssim \sqrt{\frac{v_\phi+\cC_T}{T}} + \sqrt{\frac{v_\phi\norm{\Gamma_T}_2\log(1 / \delta)}{T}}
+ \frac{\kappa \norm{\Gamma_T}_2\log(1/\delta)}{T},
\end{equation}
where \(v_\phi:=\norm{\phi-\pi\phi}_\pi^2\), \(\cC_T\) is a certain cumulative RKHS covariance coefficient, \(\Gamma_T\) is a coupling matrix in the sense of \citet{samson2000concentration}, and \( \kappa \) is a uniform feature map bound. They also derive a blocking analogue under \(\beta\)-mixing. For uniformly ergodic Markov chains --- a condition revisited in Section~\ref{sec:comparison}, where it is shown to imply Assumption~\ref{ass:uniform-mmd-ergodicity} --- \(\norm{\Gamma_T}_2\lesssim t_{\mix}\), where \( t_{\mix} := \min\bigl\{n \ge 1 : \sup_{x \in \cX} \abs{P^n(x,\cdot) - \pi}_{\TV} \le 1/4\bigr\} \); hence their deviation terms scale as \(\sqrt{t_{\mix}/T}\) and \(t_{\mix}/T\). In the language of strong mixing conditions, an absolute \( L^2(\pi) \) spectral gap implies \( \rho \)-mixing at a geometric rate. Since \( \varphi \)-mixing implies \( \rho \)-mixing but not conversely \citep{bradley2005basic}, their \( \sigma \)-field condition is strictly stronger than our gap; what we require in addition is not mixing, but uniform relaxation in the selected MMD geometry. Their \( \beta \)-mixing variant and \( \rho \)-mixing are in general not comparable.

\medskip
We briefly mention related results in stronger IPM geometries. Under a transition-wise \(T_1(K)\) inequality~\citep{bobkov1999exponential} and one-step \(W_1\)-contraction with factor \(\theta<1\), \citet{boissard2011simple} obtains \(W_1\)-concentration at Gaussian scale \(K^{1/2} \tau_\theta/\sqrt{T}\), up to a metric-entropy factor. Under geometric \(W_1\)-contraction, \citet{kloeckner2020empirical} obtains an expected \(C^s\)-IPM error of order \(T_\eff^{-1/2}\), up to logarithmic factors, on bounded subsets of \(\bbR^d\) when \(s>d/2\), with fluctuations of order \(\tau_\theta/\sqrt{T} \). \citet{riekert2022convergence} extends this analysis to unbounded subsets of \(\bbR^d\) under moment assumptions. In our settings, the Hilbertian geometry of MMD permits a direct vector-valued concentration argument and avoids the covering-number prefactor appearing in the \(W_1\) bound.
\paragraph{Non-stationary extension.} Theorem~\ref{thm:mmd-occupation-measure-whp} assumes that the initial condition is distributed according to the invariant measure \(\pi\), which may be restrictive in practice. In what follows, we show that this assumption can be relaxed to a sufficiently regular non-stationary initial distribution \(\nu \in \cP(\cX) \). The result follows from essentially the same argument. The only modification is that the concentration inequality used under stationarity is replaced by its non-stationary counterpart.

\begin{theorem}[Empirical occupation measure convergence in MMD from a non-stationary initial measure]
\label{thm:mmd-occupation-measure-whp-nonstationary}
Let \((X_t)_{t\geq 0}\) be a Markov process with initial distribution \(X_0 \sim \nu\) and invariant measure \(\pi\). Let \(p\in(1,\infty]\), assume that \(\nu \ll \pi\) and that \( d\nu/d\pi \in L^p(\pi) \). Set \( q := p / (p-1) \). Assume that \(P\) admits an absolute \(L^2(\pi)\) spectral gap \(1 - \rho > 0\). Suppose Assumption~\ref{ass:uniform-mmd-ergodicity} holds with constants \( C < \infty \) and \( \theta \in [0,1) \).

\medskip \noindent
For \(t_0 \geq 0\) and \(T \geq 1\), define the occupation measure after a burn-in period of length \(t_0\) by
\[
\wh\mu_{T,t_0}
:= \frac{1}{T}\sum_{t=t_0}^{t_0+T-1}\delta_{X_t}.
\]
Then, for every \(\delta\in(0,1)\), with probability at least \(1-\delta\),
\begin{equation}
\MMD_\phi(\wh\mu_{T,t_0},\pi)
\leq 2\norm{\phi}_\pi \sqrt{\frac{\tau_\rel\log(4/\delta)}{T}}
+ \frac{ 12\tau_\theta \sqrt{ \tau_\rel \, q\log(4/\delta) \log(4D/\delta) } }{T},
\label{eq:mmd-nonstationary-bound}
\end{equation}
where \( D = D(\nu, t_0, p) := \LRnorm{ d(\nu P^{t_0}) / d\pi }_{L^p(\pi)} \) and \( \tau_\rel \) is defined in~\eqref{eq:tau-rel-def}.
\end{theorem}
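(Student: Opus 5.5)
We follow the proof of Theorem~\ref{thm:mmd-occupation-measure-whp}, changing only the concentration step where stationarity enters. Set \(g:=\phi-\pi\phi\). Assumption~\ref{ass:uniform-mmd-ergodicity} makes \(u:=\sum_{n\ge0}P^ng\) converge uniformly with \(\norm{u}_\infty\le\tau_\theta\), and \(u-Pu=g\). Re-indexing the trajectory from time \(t_0\), we write
\[
\sum_{t=t_0}^{t_0+T-1}g(X_t)=M_T+u(X_{t_0})-u(X_{t_0+T}),\qquad M_T:=\sum_{t=t_0+1}^{t_0+T}\bigl(u(X_t)-Pu(X_{t-1})\bigr).
\]
This is an \(\cH\)-valued martingale with increments bounded by \(2\tau_\theta\), and the boundary term is bounded by \(2\tau_\theta\). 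Both bounds are deterministic and hold for any initial law. The shifted process \((X_{t_0+s})_{s\ge0}\) is a Markov chain started from \(\nu P^{t_0}\), and \(d(\nu P^{t_0})/d\pi\in L^p(\pi)\) with norm \(D\le\norm{d\nu/d\pi}_{L^p}\) by Jensen and invariance. So it suffices to take \(t_0=0\) with initial law \(\nu':=\nu P^{t_0}\).

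The martingale step uses the same localized Pinelis--Bernstein inequality as before. On the event \(\{\la M\ra_T\le v\}\), with probability at least \(1-\delta/2\) we have \(\norm{M_T}\lesssim\sqrt{v\log(4/\delta)}+\tau_\theta\log(4/\delta)\). This inequality conditions only on the filtration and needs no stationarity. The quadratic variation is \(\la M\ra_T=\sum_{t=0}^{T-1}V_u(X_t)\), where \(0\le V_u\le 4\tau_\theta^2\). The stationary bound \(\pi(V_u)\le\frac{1+\rho}{1-\rho}\norm{\phi}_\pi^2\le 2\tau_\rel\norm{\phi}_\pi^2\) still holds, since it concerns only \(\pi\) and the spectral gap via Proposition~\ref{prop:gap-transfer-on-bochner}.

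The only change from the stationary proof is how we control \(\sum_t V_u(X_t)-T\pi(V_u)\). We use the non-stationary Bernstein inequality for chains with an absolute spectral gap (the counterpart in the appendix, in the style of Paulin/Fan et al.). Its basic tool is Hölder's inequality in the change of measure: for a nonnegative functional \(F\) of the path,
\[
\bbE_{\nu'}F=\bbE_\pi\Bigl[\tfrac{d\nu'}{d\pi}(X_0)F\Bigr]\le D\,(\bbE_\pi F^q)^{1/q}.
\]
Applied to indicators, this turns a stationary tail \(\epsilon\) into \(D\epsilon^{1/q}\). To get total failure probability \(\delta/2\), we therefore run the stationary Bernstein bound at level \(\epsilon=(\delta/(4D))^q\). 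The resulting deviation is \(\sqrt{T\tau_\rel\pi(V_u)\tau_\theta^2\,q\log(4D/\delta)}+\tau_\rel\tau_\theta^2 q\log(4D/\delta)\). We then set
\[
v:=T\pi(V_u)\cdot(\text{const})+c\,\tau_\theta^2\tau_\rel\,q\log(4D/\delta),
\]
using AM--GM to absorb the cross term into \(T\pi(V_u)\).

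Plugging \(v\) into the martingale bound gives \(\sqrt{v\log(4/\delta)}\le 2\norm{\phi}_\pi\sqrt{T\tau_\rel\log(4/\delta)}+c'\tau_\theta\sqrt{\tau_\rel q\log(4/\delta)\log(4D/\delta)}\). The remaining terms \(\tau_\theta\log(4/\delta)\) and the boundary \(2\tau_\theta\) are dominated by the same expression, because \(q\ge1\), \(D\ge1\) and \(\tau_\rel\ge1\). Dividing by \(T\), a union bound over the two events gives the stated estimate after tracking constants to 12.

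The main obstacle is getting the exact form of the non-stationary Bernstein inequality right. In particular, the \(\log(4D/\delta)\) factor together with the \(q\) must sit only under the square root in the \(T^{-1}\) term, while the leading variance term keeps only \(\log(4/\delta)\). This is delicate because the variance-level term of Bernstein at level \(\epsilon\) would otherwise carry \(q\log(4D/\delta)\) into the leading term. We first absorb it by AM--GM into \(T\pi(V_u)\) plus a lower-order term, then check that the constants survive.
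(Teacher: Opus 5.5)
Your architecture matches the paper's proof. You use the same Poisson solution and martingale decomposition on the block starting at \(t_0\), the same deterministic \(2\tau_\theta\) bounds on increments and boundary, the same bound on \(\pi(V_u)\), and a nonstationary Bernstein inequality for \(\sum_t V_u(X_t)\). Your H\"older change-of-measure derivation, running the stationary tail at level \((\delta/(4D))^q\), is exactly the content of the nonstationary Bernstein theorem the paper quotes, so that part is fine.

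The step that does not work as written is absorbing the cross term by AM--GM. Set \(L_{\nu,\delta}:=\log(4D/\delta)\) and \(L_\delta:=\log(4/\delta)\). The cross term is at most \(\sqrt{T\pi(V_u)\cdot 8q\alpha_1(\rho)\tau_\theta^2L_{\nu,\delta}}\). Any split \(\le\frac{\gamma}{2}T\pi(V_u)+\frac{4}{\gamma}q\alpha_1(\rho)\tau_\theta^2L_{\nu,\delta}\) leaves a coefficient \(1+\gamma/2>1\) on \(T\pi(V_u)\). Pinelis--Bernstein then contributes \(\sqrt{2vL_\delta}\), and \(\pi(V_u)\le2\tau_\rel\norm{\phi}_\pi^2\). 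Your leading term therefore becomes \(2\sqrt{1+\gamma/2}\,\norm{\phi}_\pi\sqrt{\tau_\rel L_\delta/T}\), not the stated constant \(2\). Letting \(\gamma\to0\) blows up the \(T^{-1}\) term, and splitting the square root termwise instead produces a \(T^{-3/4}\) term that is absent from the statement. So the inequality \(\sqrt{v\log(4/\delta)}\le2\norm{\phi}_\pi\sqrt{T\tau_\rel\log(4/\delta)}+\dots\) does not follow from the \(v\) you define.

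The fix, which is what the paper does, is to complete the square rather than absorb. Put \(B:=40q\tau_\rel\tau_\theta^2L_{\nu,\delta}\), using \(\alpha_1(\rho)\le2\tau_\rel\), \(\alpha_2(\rho)\le5\tau_\rel\), \(\norm{V_u}_\infty\le4\tau_\theta^2\). Then the cross term is at most \(2\sqrt{T\pi(V_u)}\sqrt{B}\) and the Bernstein remainder is at most \(B\). Hence
\[
V_{T,t_0}\le\Bigl(\sqrt{T\pi(V_u)}+\sqrt{B}\Bigr)^2\le2\tau_\rel\Bigl(\sqrt{T}\norm{\phi}_\pi+2\sqrt{5q}\,\tau_\theta\sqrt{L_{\nu,\delta}}\Bigr)^2 .
\]
The square root of this envelope splits linearly with coefficient exactly one on \(\sqrt T\norm{\phi}_\pi\). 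Pinelis--Bernstein then gives \(2\norm{\phi}_\pi\sqrt{\tau_\rel L_\delta/T}+4\sqrt{5q}\,\tau_\theta\sqrt{\tau_\rel L_\delta L_{\nu,\delta}}/T\). The remaining \(\tfrac43\tau_\theta L_\delta/T+2\tau_\theta/T\) are absorbed using \(D\ge1\), \(q\ge1\), \(L_\delta\ge\log4\) and \(4\sqrt5+4/3+2/\log4<12\). With that replacement, your proof coincides with the paper's.
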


\begin{proof}[Sketch of the proof]
We use the same Poisson solution and martingale decomposition on the block \(X_{t_0},\ldots,X_{t_0+T}\). The only change is to apply the non-stationary Bernstein inequality of Theorem~\ref{thm:jiang-bernstein-nonstationary} to \(V_u-\pi(V_u)\), with initial distribution \(\nu P^{t_0}\). This introduces \(q\) and \(D\) into the deviation bound for the predictable quadratic variation, while its stationary mean \(T\pi(V_u)\) is unchanged. The localized Pinelis--Bernstein inequality and the boundary estimate then yield~\eqref{eq:mmd-nonstationary-bound}. The full proof is given in Appendix~\ref{app:occupation-nonstationary}.
\end{proof}

\paragraph{Discussion.}
The effect of a non-stationary initial distribution appears only in the lower-order term of order \(T^{-1}\), through the factor
\begin{equation}
\sqrt{q\log(4/\delta)\log\!\big(4D(\nu,t_0,p)/\delta\big)},
\end{equation}
while the leading \(T^{-1/2}\) term remains unchanged.
Taking \(\nu=\pi\) and \(t_0=0\) recovers Theorem~\ref{thm:mmd-occupation-measure-whp}, since \(D(\pi,t_0,p)=1\), and one may take \(q=1\) by choosing \(p=\infty\).
\paragraph{No-gap extension.} We now derive an occupation-measure bound under uniform geometric MMD relaxation alone, to make explicit which conclusions remain valid without stationarity or a spectral gap.

\begin{corollary}[Occupation measure bound without a spectral gap]
\label{cor:no-gap}
Let \((X_t)_{t\geq0}\) be a Markov chain with transition kernel \(P\), invariant measure \(\pi\), and initial distribution \(X_0\sim\nu\), where \(\nu\in\cP(\cX)\) is arbitrary.
Suppose Assumption~\ref{ass:uniform-mmd-ergodicity} holds with constants \(C<\infty\) and \(\theta\in[0,1)\).
Then, for every \(T\geq1\) and every \(\delta\in(0,1)\), with probability at least \(1-\delta\),
\begin{equation}
\label{eq:no-gap}
\MMD_\phi(\wh\mu_T,\pi)
\leq \tau_\theta\left(\sqrt{\frac{2\log(2/\delta)}{T}}
+\frac{5 \log(2/\delta)}{T}\right).
\end{equation}
\end{corollary}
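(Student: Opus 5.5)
We reuse the Poisson-equation martingale decomposition from the sketch of Theorem~\ref{thm:mmd-occupation-measure-whp}, but never touch the spectral gap. Instead, we control the martingale only through its uniformly bounded increments. Set \(g:=\phi-\pi\phi\), so that \(\MMD_\phi(\wh\mu_T,\pi)=\norm{\frac1T\sum_{t=0}^{T-1}g(X_t)}_\cH\).

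\textbf{Step 1: the Poisson solution.} By Assumption~\ref{ass:uniform-mmd-ergodicity}, \(\norm{P^ng(x)}_\cH=\MMD_\phi(P^n(x,\cdot),\pi)\leq C\theta^n\) for every \(x\). Hence \(u:=\sum_{n\geq0}P^ng\) converges absolutely and uniformly in \(\cH\), satisfies \(\norm{u}_\infty\leq\tau_\theta\), and solves \(u-Pu=g\). Exchanging \(P\) with the series is justified by dominated convergence for Bochner integrals. The same bound holds for the shifted series: \(\norm{Pu}_\infty\leq\sum_{n\geq1}C\theta^n\leq\tau_\theta\).

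\textbf{Step 2: the decomposition.} Summing \(g(X_t)=u(X_t)-Pu(X_t)\) and telescoping gives
\[
\sum_{t=0}^{T-1}g(X_t)=M_T+u(X_0)-u(X_T),\qquad M_T=\sum_{t=1}^T d_t,\quad d_t:=u(X_t)-Pu(X_{t-1}).
\]
Here \((d_t)\) is a martingale difference sequence with respect to the natural filtration, for any initial law \(\nu\), by the Markov property. Its increments satisfy \(\norm{d_t}_\cH\leq 2\tau_\theta\). A slightly sharper bound follows from \(d_t=\int(u(X_t)-u(y))P(X_{t-1},dy)\), but the crude bound suffices. The boundary term is at most \(2\tau_\theta\), and dividing by \(T\) it contributes \(2\tau_\theta/T\).

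\textbf{Step 3: concentration of \(M_T\).} For bounded martingale differences in a Hilbert space, a Pinelis-type inequality applies; Hilbert spaces are \((2,1)\)-smooth. I would use the Bernstein form: with \(\norm{d_t}\leq b\) and \(\la M\ra_T\leq \sigma^2\) almost surely,
\[
\norm{M_T}\leq\sqrt{2\sigma^2\log(2/\delta)}+\tfrac{2}{3}b\log(2/\delta)
\]
with probability at least \(1-\delta\). Here the predictable quadratic variation is bounded deterministically: \(\bbE_{t-1}\norm{d_t}^2=P\norm{u}^2-\norm{Pu}^2\leq \norm{u}_\infty^2\leq\tau_\theta^2\), so \(\sigma^2=T\tau_\theta^2\) and \(b=2\tau_\theta\). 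This yields \(\tau_\theta\sqrt{2T\log(2/\delta)}+\tfrac43\tau_\theta\log(2/\delta)\). Dividing by \(T\) and adding \(2\tau_\theta/T\leq 2\tau_\theta\log(2/\delta)/(T\log 2)\) gives a linear term of at most \((4/3+2/\log 2)\tau_\theta\log(2/\delta)/T\leq 5\tau_\theta\log(2/\delta)/T\), as claimed.

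\textbf{Main obstacle.} The delicate points are the exact form and constants of the vector-valued Pinelis inequality available in the appendix, and making sure \(\log(2/\delta)\geq\log2\) is enough to absorb the boundary term into the constant \(5\). If the appendix version of Pinelis carries a different constant, for example \(b\log(2/\delta)\) instead of \(\frac23 b\log(2/\delta)\), I would sharpen the increment bound instead. One way is to note that \(\bbE_{t-1}\norm{d_t}^2\leq P\norm{u}^2\leq\tau_\theta^2\) is already tight. Another is to use centering to get \(\norm{d_t}\leq 2\tau_\theta\) only in the worst case, while checking that \(2+2=4\) fits within \(5\).
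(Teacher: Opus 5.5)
Your proposal is correct and matches the paper's proof essentially line by line. It uses the same three ingredients: the uniformly convergent Poisson series with \(\norm{u}_\infty\leq\tau_\theta\); the martingale decomposition with increments and boundary term bounded by \(2\tau_\theta\), together with the deterministic bound \(\la M\ra_T\leq T\tau_\theta^2\) obtained from \(P\norm{u}_\cH^2-\norm{Pu}_\cH^2\leq\tau_\theta^2\); and the Pinelis--Bernstein inequality with \(R=2\tau_\theta\), \(V=T\tau_\theta^2\), with the boundary term absorbed via \(4/3+2/\log 2<5\). The contingency plans in your last paragraph are unnecessary, since the appendix version of the inequality carries exactly the \(\tfrac{2}{3}R\log(2/\delta)\) term you assumed.
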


\begin{proof}[Sketch of the proof]
The proof uses the same Poisson equation and martingale decomposition as the main theorem, with the deterministic bound \(\la M \ra_T \leq T \tau_\theta^2\) replacing the spectral gap estimate. The details are given in Appendix~\ref{app:occupation-nogap}.
\end{proof}

\paragraph{Discussion.}
The estimate holds for every initial distribution, including those singular with respect to \(\pi\). For any fixed burn-in period \(t_0\geq0\), it also applies to \( \wh\mu_{T,t_0} \) by replacing \(\nu\) with \(\nu P^{t_0}\). The leading term now depends on the scale \(\tau_\theta\). By comparison, under stationarity and a spectral gap, Theorem~\ref{thm:mmd-occupation-measure-whp} gives the scale \(\norm{\phi}_\pi\sqrt{\tau_\rel}\) in the leading term and \(\tau_\theta\) only enters through \(T^{-1}\) correction.
\begin{corollary}[Almost-sure convergence]
\label{cor:mmd-occupation-weak-as}
Under the conditions of Corollary~\ref{cor:no-gap},
\begin{equation}
\MMD_\phi(\wh\mu_T,\pi)\longrightarrow0
\quad\text{a.s.}.
\end{equation}
If, in addition, \(\cX\) is compact and the induced kernel \(k(x,y):=\la\phi(x),\phi(y)\ra_\cH\) is continuous and characteristic, then
\begin{equation}
\wh\mu_T\rightharpoonup^\star\pi
\quad\text{a.s.}.
\end{equation}
\end{corollary}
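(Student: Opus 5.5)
We derive almost-sure convergence from the high-probability bound of Corollary~\ref{cor:no-gap} by Borel--Cantelli. Fix \(T\geq1\) and choose the confidence level \(\delta_T:=2T^{-2}\) for \(T\geq2\). Then Corollary~\ref{cor:no-gap} gives, with probability at least \(1-2T^{-2}\),
\[
\MMD_\phi(\wh\mu_T,\pi)
\leq \tau_\theta\left(\sqrt{\frac{4\log T}{T}}+\frac{10\log T}{T}\right)=:\varepsilon_T .
\]
Let \(A_T\) be the event that this bound fails. Since \(\sum_T \bbP(A_T)\leq\sum_T 2T^{-2}<\infty\), the Borel--Cantelli lemma shows that almost surely only finitely many \(A_T\) occur. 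Hence, almost surely, \(\MMD_\phi(\wh\mu_T,\pi)\leq\varepsilon_T\) for all sufficiently large \(T\). Because \(\varepsilon_T\to0\), this proves the first claim. No stationarity or spectral gap is needed here, exactly as in Corollary~\ref{cor:no-gap}.

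For the second claim, assume \(\cX\) is compact and \(k\) is continuous and characteristic. We show that, on the full-measure event where \(\MMD_\phi(\wh\mu_T,\pi)\to0\), the sequence \(\wh\mu_T\) converges weakly to \(\pi\). The argument is deterministic on that event.

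First, \(\cP(\cX)\) is weakly sequentially compact by Prokhorov's theorem, since \(\cX\) is compact. So it suffices to show that every weakly convergent subsequence \(\wh\mu_{T_j}\rightharpoonup\mu\) has limit \(\mu=\pi\). Then every subsequence has a further subsequence converging to \(\pi\), which forces convergence of the whole sequence.

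Next, we show that mean embeddings converge along such a subsequence. Continuity of \(k\) on the compact space \(\cX\times\cX\) gives that \(k\) is bounded, so \(\phi\) is bounded. For every \(h\in\cH\), the function \(S_\phi h=\la h,\phi(\cdot)\ra\) is continuous. To see this, note that for the RKHS \(\cH_\phi\), \(\norm{\phi(x)-\phi(y)}^2=k(x,x)-2k(x,y)+k(y,y)\) is continuous in \(x,y\) and vanishes on the diagonal. Hence \(\phi\) is norm-continuous, and so \(S_\phi h\) is continuous.

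Weak convergence therefore gives \(\la h,m_\phi(\wh\mu_{T_j})\ra\to\la h,m_\phi(\mu)\ra\), so \(m_\phi(\wh\mu_{T_j})\rightharpoonup m_\phi(\mu)\) weakly in \(\cH\). Since \(\phi\) is norm-continuous with compact range, the convergence is in fact in norm, for instance via \(\MMD^2\) expressed as double integrals of the continuous kernel against product measures, which converge weakly. In either case, \(\norm{m_\phi(\wh\mu_{T_j})-m_\phi(\pi)}\to0\) forces \(m_\phi(\mu)=m_\phi(\pi)\), using weak lower semicontinuity of the norm. Characteristicness then gives \(\mu=\pi\).

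The main obstacle is this last identification step: continuity of the test functions in \(\cF_\phi\) must be verified so that weak convergence of measures passes to the mean embeddings. The norm-continuity of \(\phi\) obtained from continuity of \(k\) handles it. The probabilistic part is routine once \(\delta_T\) is chosen summable.
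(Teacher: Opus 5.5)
Your proof is correct. The first claim follows the paper's route exactly: Borel--Cantelli applied to Corollary~\ref{cor:no-gap} with a summable confidence sequence. The paper takes \(\delta=T^{-2}\) and you take \(\delta_T=2T^{-2}\); this difference is immaterial, and your choice is admissible for \(T\geq2\).

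For the weak-convergence claim your route differs. The paper simply invokes the known fact that, on a compact space with a continuous characteristic kernel, \(\MMD_\phi\) metrizes weak convergence on \(\cP(\cX)\), citing the metrization result of Simon-Gabriel et al. You instead prove from scratch the one direction actually needed:

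\begin{enumerate}
\item compactness of \(\cP(\cX)\) gives convergent subsequences;
\item norm-continuity of \(\phi\), deduced from continuity of \(k\), makes each \(S_\phi h\) a bounded continuous test function, so the mean embeddings converge weakly along those subsequences;
\item uniqueness of limits in \(\cH\) gives \(m_\phi(\mu)=m_\phi(\pi)\);
\item injectivity of \(m_\phi\) then forces \(\mu=\pi\);
\item the subsequence principle upgrades this to convergence of the whole sequence.
\end{enumerate}

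The citation is shorter and gives the full two-sided metrization, which the cited work also extends beyond compact \(\cX\). Your argument is self-contained and makes explicit where compactness, continuity and characteristicness each enter.

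Your aside that the embeddings converge in norm along the subsequence is true, via the double-integral representation of \(\MMD^2\) and weak convergence of product measures. It is not needed, however: once \(m_\phi(\wh\mu_{T_j})\to m_\phi(\pi)\) in norm, uniqueness of weak limits in \(\cH\) already identifies \(m_\phi(\mu)\).
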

\begin{proof}
For \(T\geq2\), let \(r(T)\) be the right-hand side of the bound in Corollary~\ref{cor:no-gap} with \(\delta=T^{-2}\). Then \(r(T)\to0\) and
\begin{equation}
\bbP\!\left\{\MMD_\phi(\wh\mu_T,  \pi)>r(T)\right\}
\leq T^{-2}.
\label{eq:t-2-bound}
\end{equation}
By the Borel--Cantelli lemma, almost surely,
\(\MMD_\phi(\wh\mu_T,\pi)\leq r(T)\) for all sufficiently large \(T\), proving the first claim. Under the additional assumptions, \(\MMD_\phi\) metrizes weak convergence on \(\cP(\cX)\), which gives the second claim; see, e.g.,~\citet{simon2023metrizing}.
\end{proof}
\subsection{Sufficient conditions through metric comparison}
\label{sec:comparison}

In this section we will discuss verifiable conditions under which Assumption~\ref{ass:uniform-mmd-ergodicity} is satisfied together with a few illustrative examples. We first analyze cases where \( \MMD \) is controlled (i.e., upper-bounded) by a different metric, in particular we consider total variation and \( W_1 \).

\paragraph{Control by total variation.}
\label{par:tv-discussion}
Consider \( d_{\cF_{\TV}} \) define in~\eqref{eq:tv-ipm-def}. This choice is particularly natural for Markov chains: every Markov operator preserves \(\cF_{\TV}\), since \(0\leq f\leq1\) implies \(0\leq Pf\leq1\). Consequently, its action on probability measures is non-expansive in total variation. Suppose that there exist constants \(C_{\TV}<\infty\) and \(\theta\in[0,1)\) such that
\begin{equation}
\sup_{x\in\cX}
\norm{P^n(x,\cdot)-\pi}_{\TV}
\leq C_{\TV}\theta^n,
\qquad n \ge 0.
\end{equation}
A Markov chain satisfying this condition is called \emph{uniformly ergodic}. Uniformly ergodic chains and their properties have been studied extensively; see, e.g.,~\citet{meyn2012markov,douc2018markov}. If the feature map is uniformly bounded as in~\eqref{eq:bounded-feature}, then \( \norm{S_\phi h}_\infty\leq\kappa \) whenever \(\norm{h}_{\cH}\leq1\). Consequently, \( \cF_\phi\subseteq 2\kappa \, \cF_{\TV} + \bbR\one \), and~\eqref{eq:ipm-comparison-scaled} yields
\begin{equation}
\MMD_\phi(\mu,\nu)
\leq 2\kappa\norm{\mu-\nu}_{\TV}.
\end{equation}
It follows from~\eqref{eq:weaker-ipm-uniform-convergence} that for uniformly bounded feature maps, uniform ergodicity in total variation implies Assumption~\ref{ass:uniform-mmd-ergodicity}, with \(C=2\kappa C_{\TV}\) and the same geometric rate \(\theta\).

\medskip
Next, we discuss a dynamical system, where convergence in \( \MMD_\phi \) holds while it fails in \( \TV \) norm.

\begin{example}[Deterministic map.]
Take \( \cX = [0, 1] \) and the deterministic map \( T_q(x) = q x \) for \( q \in (0, 1) \). The unique invariant (and therefore unique ergodic) measure is \( \pi = \delta_0 \). Note that \( \norm{P^n(x, \cdot) - \pi}_\TV = 1  \) for any \( x \in (0, 1] \). Now, consider a Gaussian kernel which is characteristic on \( \cX \) and \( \phi: \cX \to \cH \) is known to be Lipschitz, so that
\begin{equation}
\MMD_\phi(P^n(x, \cdot), \pi)
= \norm{ \phi(q^n x) - \phi(0) }_\cH
\leq \Lip(\phi) \abs{ q^n x - 0 }
= \Lip(\phi) \abs{x} q^n
\leq \Lip(\phi) q^n.
\end{equation}
Hence Assumption~\ref{ass:uniform-mmd-ergodicity} holds with \(C=\Lip(\phi)\) and \(\theta=q\). More generally, Lipschitz regularity is not essential. If \(\phi\) is \(\beta\)-Hölder for some \(\beta\in(0,1]\), namely
\begin{equation}
\norm{\phi(x)-\phi(y)}_\cH
\leq L_{\phi,\beta}\abs{x-y}^{\beta},
\qquad x,y\in[0,1],
\end{equation}
then Assumption~\ref{ass:uniform-mmd-ergodicity} holds with
\(C=L_{\phi,\beta}\) and \(\theta=q^\beta\). For example, the canonical feature map of the Laplace kernel is \(1/2\)-H{\"o}lder.
\end{example}

\paragraph{Control by \(W_1\) contraction.}
Suppose that \(\phi:\cX\to\cH\) is \(L_\phi\)-Lipschitz with respect to \(d\). For every \(h\in\cH\) with \(\norm{h}_{\cH}\leq1\),
\begin{equation}
\abs{(S_\phi h)(x)-(S_\phi h)(y)}
\leq \norm{\phi(x)-\phi(y)}_{\cH}
\leq L_\phi d(x,y).
\end{equation}
Consequently, \( \cF_\phi \subseteq L_\phi\cF_{\Lip(d)}+\bbR\one \). It follows immediately from~\eqref{eq:ipm-comparison-scaled} that
\begin{equation}
\label{eq:mmd-controlled-by-w1}
\MMD_\phi(\mu,\nu)
\leq L_\phi W_1(\mu,\nu).
\end{equation}
For more information on the comparison between MMD and Wasserstein distances, including necessary and sufficient conditions for control by \( W_1 \), we refer to \citet{vayer2023controlling}.

\medskip
Now, consider the Markov transition kernel \( P \) such that \emph{Dobrushin condition} (see, e.g., \citealt{ollivier2009ricci,boissard2011simple}) holds with \( \theta < 1 \), i.e.,
\begin{equation}
W_1\bigl(P(x,\cdot),P(y,\cdot)\bigr)
\leq \theta d(x,y),
\qquad x,y\in\cX.
\label{eq:w1-pointwise-contraction}
\end{equation}
By the Kantorovich--Rubinstein duality, the condition lifts to
\begin{equation}
W_1(\mu P,\nu P)
\leq \theta W_1(\mu,\nu),
\qquad \mu,\nu\in\cP_1(\cX).
\end{equation}
Taking \(\mu=\delta_x\), \(\nu=\pi\), and using \(\pi P=\pi\), we obtain
\begin{equation}
W_1\bigl(P^n(x,\cdot),\pi\bigr)
\leq \theta^n W_1(\delta_x,\pi)
= \theta^n\int d(x,y)\,\pi(dy)
\leq \diam(\cX) \theta^n.
\label{eq:w1-uniform-relaxation}
\end{equation}
Using~\eqref{eq:mmd-controlled-by-w1} and taking \( \sup \) over \( \cX \) yields
\begin{equation}
\sup_{x\in\cX}
\MMD_\phi\bigl(P^n(x,\cdot),\pi\bigr)
\leq L_\phi \diam(\cX) \theta^n.
\label{eq:w1-control}
\end{equation}

\begin{remark}
A basic class of examples for which condition~\eqref{eq:w1-pointwise-contraction} holds is given by iterated random functions~\citep{diaconis1999iterated}
\begin{equation}
X_{n+1}=F(X_n,\xi_{n+1}),
\end{equation}
where \( (\xi_n)_{n\geq1} \) is a sequence of i.i.d.\ random innovations, independent of \(X_0\). If, for some \( \theta<1 \),
\begin{equation}
d\bigl(F(x,\xi_1),F(y,\xi_1)\bigr)
\leq \theta d(x,y),
\qquad x,y\in\cX,
\label{eq:w1-coupling}
\end{equation}
almost surely, then the synchronous coupling immediately yields the one-step \(W_1\)-contraction as in~\eqref{eq:w1-pointwise-contraction}. More generally, contraction in Wasserstein-type distances have been studied extensively; see, for example, \citet{hairer2011asymptotic,butkovsky2014subgeometric,
kloeckner2015contraction,durmus2016subgeometric} and the references therein.
\end{remark}

\medskip
The following example shows that Assumption~\ref{ass:uniform-mmd-ergodicity} may hold for Lipschitz feature maps even when convergence in total variation fails: the chain is uniformly contractive in \(W_1\), and this contraction transfers directly to MMD.

\begin{example}[Stochastic dynamical system]
Let \(\cX=[0,1]\) and consider the autoregressive process
\begin{equation}
X_{n+1}=\Lambda X_n+\xi_{n+1},
\qquad \Lambda=\frac13,
\end{equation}
where the innovations \((\xi_n)_{n\geq1}\) are i.i.d., independent of \(X_0\), and take the values \(\{ 0, 2/3 \}\) with probability \(1/2\) each. The unique invariant measure is the \emph{Cantor distribution},
\begin{equation}
\pi
=\Law\!\left( \sum_{j=1}^{\infty}\frac{2B_j}{3^j} \right),
\end{equation}
where \((B_j)_{j\geq1}\) are i.i.d. Bernoulli random variables with parameter \(1/2\). Dobrushin condition~\eqref{eq:w1-pointwise-contraction} holds by synchronous coupling argument in~\eqref{eq:w1-coupling}. Indeed,
\begin{equation}
\abs{\Lambda x+\xi_1-\Lambda y-\xi_1}
=\Lambda \abs{x-y},
\qquad x,y \in [0, 1].
\end{equation}
Since \(\diam(\cX)=1\), the \(W_1\) control criterion~\eqref{eq:w1-control} yields, for every \(L_\phi\)-Lipschitz feature map \(\phi:[0,1]\to\cH\),
\begin{equation}
\sup_{x\in[0,1]}
\MMD_\phi\bigl(P^n(x,\cdot),\pi\bigr)
\leq L_\phi\,\Lambda^n,
\qquad n \ge 0.
\end{equation}
Thus Assumption~\ref{ass:uniform-mmd-ergodicity} holds with \(C=L_\phi\) and \(\theta=\Lambda\).

\medskip
Convergence in total variation nevertheless fails. Indeed, \(\pi\) is atomless and supported on the middle-thirds Cantor set; in particular, it is singular with respect to Lebesgue measure. For every \(x\in[0,1]\) and finite \(n\), the measure \(P^n(x,\cdot)\) has finite support and is therefore mutually singular with \(\pi\). Consequently,
\begin{equation}
\norm{P^n(x,\cdot)-\pi}_{\TV}=1,
\qquad x\in[0,1], \; n \ge 0.
\end{equation}
See~\citet{hutchinson1981fractals} for self-similar sets and measures, \citet{diaconis1999iterated} for the iterated-random-function framework, and \citet{falconer2014fractal} for background on the Cantor set.
\end{example}
\subsection{Operator contraction in kernel-induced spaces}

The previous discussion gives sufficient conditions for Assumption~\ref{ass:uniform-mmd-ergodicity} by comparing the MMD geometry with stronger metrics. We now take a different route and formulate conditions directly in the geometry generated by the feature map. This leads naturally to a spectral-gap condition on the associated power spaces.

\paragraph{Centered feature operator.}
Assume that \(\phi\in L^2(\pi;\cH)\), and define the centered feature map
\begin{equation}
\overline\phi(x):=\phi(x)-\pi\phi,
\qquad x\in\cX.
\end{equation}
By Proposition~\ref{prop:mmd-affine-transformation}, centering preserves the discrepancy: \(\MMD_{\overline\phi}=\MMD_\phi\). Define \(\pi\)-centered version of \( S_\phi \) operator~\eqref{eq:s-phi-def} by
\begin{equation}
\overline S_\phi:\cH\to L_0^2(\pi),
\qquad (\overline S_\phi h)(x)
=\la h,\overline\phi(x)\ra_\cH,
\qquad h \in \cH, \; x \in \cX.
\label{eq:centered-feature-operator}
\end{equation}
Its adjoint is
\begin{equation}
\overline S_\phi^\ast f
=\int f(x)\overline\phi(x)\,\pi(dx),
\qquad f\in L_0^2(\pi).
\end{equation}
Define \( \overline L_\phi :=\overline S_\phi\overline S_\phi^\ast \) acting on \(L_0^2(\pi)\).
\(\overline L_\phi\) is positive, self-adjoint, and trace class, with
\begin{equation}
\Tr(\overline L_\phi)
=\norm{\overline S_\phi}_{\cL_2(\cH,L_0^2(\pi))}^2
=\int\norm{\overline\phi(x)}_\cH^2\,\pi(dx).
\label{eq:centered-feature-operator-trace}
\end{equation}
Let
\begin{equation}
\cG
:=\overline{\Ran(\overline L_\phi)}
=\ker(\overline L_\phi)^\perp
\subseteq L_0^2(\pi),
\label{eq:centered-effective-space}
\end{equation}
and let \((\overline\lambda_j,\overline e_j)_{j\geq1}\) denote the nonzero eigenpairs of \(\overline L_\phi\), where \((\overline e_j)_{j\geq1}\) is an orthonormal basis of \(\cG\). Then
\begin{equation}
\overline L_\phi f
=\sum_{j\geq1}
\overline\lambda_j
\la f,\overline e_j\ra_\pi\overline e_j,
\qquad f\in L_0^2(\pi).
\label{eq:centered-feature-spectral-decomposition}
\end{equation}
This is the standard spectral representation of the integral operator; see~\citet[Section~2]{steinwart2012mercer}.

\paragraph{Power spaces.}
For \(\alpha\geq0\), define
\begin{equation}
\cE_\alpha
:=\Ran\bigl(\overline L_\phi^{\alpha/2}\bigr)
=\left\{ f=\sum_{j\geq1}f_j\overline e_j: \sum_{j\geq1}\overline\lambda_j^{-\alpha}f_j^2<\infty \right\},
\label{eq:centered-hilbert-scale}
\end{equation}
with
\begin{equation}
\norm{f}_\alpha
:=\bnorm{\overline L_\phi^{-\alpha/2}f}_\pi
=\left(
\sum_{j\geq1}\overline\lambda_j^{-\alpha}f_j^2
\right)^{1/2}.
\label{eq:centered-hilbert-scale-norm}
\end{equation}
Here \(\overline L_\phi^0\) is understood as the identity on \(\cG\), so that \(\cE_0=\cG\). This is the standard \(\alpha\)-power space associated with \(\overline L_\phi\); see~\citet[Equation~(36) and Theorem~4.6]{steinwart2012mercer} and~\citet[Section~2]{fischer2020sobolev}. In particular, for \(0<\alpha<1\), \(\cE_\alpha\) agrees, up to equivalence of norms, with the real interpolation space between \(\cG\) and \(\cE_1\).


\paragraph{Dual spaces.}
The negative-order space \(\cE_{-\alpha}\) is the completion of \(\cG\) with respect to
\begin{equation}
\norm{h}_{-\alpha}
:=\bnorm{\overline L_\phi^{\alpha/2}h}_\pi
=\left(
\sum_{j\geq1}\overline\lambda_j^\alpha h_j^2
\right)^{1/2}.
\label{eq:centered-dual-scale-norm}
\end{equation}
Equivalently, \(\cE_{-\alpha}\) is the dual of \(\cE_\alpha\) with respect to the pivot space \(\cG\), giving the Hilbert triple
\begin{equation}
\cE_\alpha\subseteq\cG\subseteq\cE_{-\alpha}.
\end{equation}

\paragraph{Pointwise realization.}
The preceding construction initially produces spaces of
\(L^2(\pi)\)-equivalence classes and does not, by itself, provide pointwise evaluation. We say that \(\cE_\alpha\) admits a pointwise realization on \(\cX\) if there exists a measurable map \( \overline\phi_\alpha:\cX\to\cE_\alpha \) such that
\begin{equation}
f(x)
=\la f,\overline\phi_\alpha(x)\ra_\alpha,
\qquad f\in\cE_\alpha,\quad x\in\cX.
\label{eq:power-space-reproducing-property}
\end{equation}
After measurable representatives of the eigenfunctions have been fixed, the corresponding evaluation representer is
\begin{equation}
\overline\phi_\alpha(x)
=\sum_{j\geq1}
\overline\lambda_j^\alpha
\overline e_j(x)\overline e_j,
\qquad \norm{\overline\phi_\alpha(x)}_\alpha^2
=\sum_{j\geq1}
\overline\lambda_j^\alpha\overline e_j(x)^2.
\label{eq:power-space-feature-map}
\end{equation}
For every finite signed measure \(\upsilon\) such that
\(\int\norm{\overline\phi_\alpha(x)}_\alpha\,\abs{\upsilon}(dx)<\infty\), the functional \(f\mapsto \upsilon(f) \) belongs to \(\cE_{-\alpha}\), and
\begin{align}
\norm{\upsilon}_{-\alpha}
&:=\sup_{\norm{f}_\alpha\leq1}
\LRabs{\int f\,d\upsilon}
=\LRnorm{\int\overline\phi_\alpha(x)\,\upsilon(dx)}_\alpha
=\left( \sum_{j\geq1}\overline\lambda_j^\alpha\upsilon_j^2 \right)^{1/2},
\qquad \upsilon_j:=\int\overline e_j\,d\upsilon.
\label{eq:measure-dual-power-norm}
\end{align}
For \(\mu,\nu\in\cP(\cX)\) for which the corresponding mean embeddings are defined, we write
\begin{equation}
\MMD_\alpha(\mu,\nu)
:=\norm{\mu-\nu}_{-\alpha}
=\LRnorm{
\int\overline\phi_\alpha(x)\,(\mu-\nu)(dx)
}_\alpha.
\label{eq:mmd-alpha-feature-map}
\end{equation}

\medskip
At level \(\alpha=1\), the standard range-space identification identifies \(\cE_1\) isometrically with the \(\pi\)-reduced centered feature space generated by \(\overline\phi\). We choose the pointwise realization compatibly with the original feature representation, so that
\begin{equation}
\MMD_1(\mu,\nu)
=\MMD_\phi(\mu,\nu)
\qquad \mu,\nu\in\cP(\cX),
\label{eq:mmd-one-original-feature}
\end{equation}
whenever the mean embeddings are defined. Equivalently, the original feature representation has no component that is invisible in \(L^2(\pi)\) but visible at individual points. This compatibility is automatic, for example, when \(\overline\phi\) is continuous, \(\supp(\pi)=\cX\), and \(\cH= \cH_{\overline \phi}\).

\medskip
We are now in a position to state the main condition.

\begin{assumption}[Hilbert-scale contraction]
\label{ass:hilbert-scale-gap}
There exist \(\alpha\geq0\), \(\rho_\alpha<1\), and \(\kappa_\alpha<\infty\) such that the following conditions hold:
\begin{enumerate}
\item[(i)] The power space \(\cE_\alpha\) admits a measurable pointwise realization satisfying
\begin{equation}
\label{eq:hilbert-scale-feature-map}
\sup_{x\in\cX}
\norm{\overline\phi_\alpha(x)}_\alpha
\leq\kappa_\alpha.
\end{equation}

\item[(ii)] The Markov operator leaves the fixed pointwise realization of \(\cE_\alpha\) invariant and satisfies
\begin{equation}
\label{eq:hilbert-scale-gap}
\norm{Pf}_\alpha
\leq\rho_\alpha\norm{f}_\alpha,
\qquad f\in\cE_\alpha.
\end{equation}
\end{enumerate}
\end{assumption}

\paragraph{Discussion.} The condition Assumption~\ref{ass:hilbert-scale-gap}~\textup{(i)} is equivalent to the continuous embedding
\begin{equation}
\cE_\alpha\hookrightarrow\ell^\infty(\cX),
\qquad \sup_{x\in\cX} \abs{f(x)}
\leq\kappa_\alpha\norm{f}_\alpha.
\label{eq:power-space-uniform-embedding}
\end{equation}
This is commonly referred to as the \(L^\infty\)-embedding, or bounded-embedding, condition for the power space. Indeed,
\begin{equation}
\sup_{x\in\cX}\norm{\overline\phi_\alpha(x)}_\alpha
= \sup_{x\in\cX}\sup_{\norm{f}_\alpha\leq1}\abs{f(x)}.
\end{equation}
It is an additional property of the power space and does not follow from the abstract range construction alone. Once a pointwise realization is available, its uniform boundedness follows, for example, if \(\cX\) is a compact metric space and \(x \mapsto \overline\phi_\alpha(x)\) is continuous as an \(\cE_\alpha\)-valued map. In that case, \(\cE_\alpha \hookrightarrow C(\cX)\); see, for example,~\citet[Section~4]{Steinwart2008-ut}. Also, under the pointwise realization, the feature map \(\overline\phi_\alpha\) is Bochner integrable with respect to \(\pi\). Moreover, since every \(f\in\cE_\alpha\) is centered,
\begin{equation}
\la f,\pi\overline\phi_\alpha\ra_\alpha
=\pi f
=0,
\qquad f\in\cE_\alpha,
\end{equation}
and hence \(\pi\overline\phi_\alpha=0\). Consequently,
\begin{equation}
\norm{\delta_x-\pi}_{-\alpha}
=\norm{\overline\phi_\alpha(x)}_\alpha,
\qquad x\in\cX.
\label{eq:centered-dirac-dual-norm}
\end{equation}
Thus condition~\eqref{eq:hilbert-scale-feature-map} is equivalent to
\begin{equation}
\sup_{x\in\cX}
\norm{\delta_x-\pi}_{-\alpha}
\leq\kappa_\alpha.
\end{equation}
By Hilbert-space duality, the condition Assumption~\ref{ass:hilbert-scale-gap}~\textup{(ii)} induces a bounded adjoint operator
\begin{equation}
P^\ast:\cE_{-\alpha}\to\cE_{-\alpha},
\qquad \norm{P^\ast h}_{-\alpha}
\leq\rho_\alpha\norm{h}_{-\alpha}.
\end{equation}
Moreover, this adjoint agrees with the push-forward action on embedded finite signed measures. Indeed, for every such measure \(\upsilon\) and every \(f\in\cE_\alpha\),
\begin{equation}
\la P^\ast\upsilon,f\ra
= \la\upsilon,Pf\ra
= \int Pf\,d\upsilon
= \int f\,d(\upsilon P)
= \la\upsilon P,f\ra.
\end{equation}
Hence \(P^\ast\upsilon=\upsilon P\) as elements of \(\cE_{-\alpha}\).

\begin{proposition}[Hilbert-scale contraction implies uniform geometric MMD relaxation]
\label{prop:mmd-alpha-bound}
Let Assumption~\ref{ass:hilbert-scale-gap} hold. Then, for every \(n\geq0\) and \(\mu\in\cP(\cX)\),
\begin{equation}
\MMD_\alpha(\mu P^n,\pi)
\leq \kappa_\alpha \rho_\alpha^n.
\label{eq:mmd-alpha-decay}
\end{equation}
Moreover, if \(\norm{\phi}_\infty\leq\kappa<\infty\), then, for
\(\alpha \in (0,1]\),
\begin{equation}
\MMD_\phi(\mu P^n,\pi)
\leq \kappa^{1-\alpha} \kappa_\alpha \rho_\alpha^n,
\qquad \mu\in\cP(\cX).
\end{equation}
\end{proposition}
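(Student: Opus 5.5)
The first claim follows from the duality discussion preceding the proposition; the second is an interpolation inequality between the dual norms $\norm{\cdot}_{-\alpha}$ and $\norm{\cdot}_{-1}$.

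\emph{Decay in $\MMD_\alpha$.} Fix $\mu\in\cP(\cX)$ and set $\upsilon:=\mu-\pi$. This is a finite signed measure, and it is integrable against $\norm{\overline\phi_\alpha}_\alpha$ by Assumption~\ref{ass:hilbert-scale-gap}\textup{(i)}. Since $\pi P=\pi$, we have $\mu P^n-\pi=\upsilon P^n$. Each $\upsilon P^k$ is again an embedded finite signed measure. By the identification $P^\ast\upsilon=\upsilon P$ in $\cE_{-\alpha}$ and the bound $\norm{P^\ast h}_{-\alpha}\le\rho_\alpha\norm{h}_{-\alpha}$, induction gives $\norm{\upsilon P^n}_{-\alpha}\le\rho_\alpha^n\norm{\mu-\pi}_{-\alpha}$.

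It remains to bound $\norm{\mu-\pi}_{-\alpha}$. Using \eqref{eq:measure-dual-power-norm} and Bochner linearity, $\int\overline\phi_\alpha\,d(\mu-\pi)=\int(\overline\phi_\alpha(x)-\pi\overline\phi_\alpha)\,\mu(dx)$. Since $\pi\overline\phi_\alpha=0$, this has norm at most $\sup_x\norm{\overline\phi_\alpha(x)}_\alpha\le\kappa_\alpha$ by \eqref{eq:centered-dirac-dual-norm}. This proves \eqref{eq:mmd-alpha-decay}.

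\emph{Transfer to $\MMD_\phi$.} By \eqref{eq:mmd-one-original-feature}, $\MMD_\phi(\mu P^n,\pi)=\norm{\upsilon P^n}_{-1}$. In coordinates, $\norm{\upsilon}_{-1}^2=\sum_j\overline\lambda_j\upsilon_j^2$. Write $\overline\lambda_j\upsilon_j^2=(\overline\lambda_j^{\alpha}\upsilon_j^2)^{\alpha}\,(\upsilon_j^2)^{1-\alpha}\,\overline\lambda_j^{1-\alpha+\alpha-\alpha^2}$. This naive H\"older split does not close cleanly, so instead I will use the interpolation $\norm{\upsilon}_{-1}^2\le\norm{\upsilon}_{-\alpha}^2\,\sup_j\overline\lambda_j^{1-\alpha}$, and then bound $\overline\lambda_j\le\overline\lambda_1\le\Tr\overline L_\phi$.

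That route yields $\norm{\overline\phi}_\pi^{1-\alpha}$ or $\norm{\overline L_\phi}^{(1-\alpha)/2}$, not $\kappa^{1-\alpha}$. The bound $\norm{\overline L_\phi}_{\rm op}\le\pi\norm{\overline\phi}^2\le\kappa^2$ holds if one uses uncentered $\phi$: $\norm{\overline L_\phi}\le\norm{\Sigma}\le\kappa^2$. Hence $\sup_j\overline\lambda_j^{1-\alpha}\le\kappa^{2(1-\alpha)}$, so $\norm{\upsilon P^n}_{-1}\le\kappa^{1-\alpha}\norm{\upsilon P^n}_{-\alpha}$. Combining with the first part gives the claim.

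The main obstacle is rigor in the coordinate representation. The quantity $\norm{\upsilon}_{-1}=\norm{\int\overline\phi\,d\upsilon}_\cH$ must equal $(\sum_j\overline\lambda_j\upsilon_j^2)^{1/2}$ with $\upsilon_j=\int\overline e_j\,d\upsilon$ for measures that are not absolutely continuous with respect to $\pi$. This is exactly the compatibility built into \eqref{eq:mmd-one-original-feature} and \eqref{eq:measure-dual-power-norm}. I would invoke those identities directly rather than reprove them, noting that $\upsilon P^n$ is integrable since $\phi$ is bounded.
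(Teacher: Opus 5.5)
Your argument is correct and essentially the paper's proof. You iterate the one-step dual contraction starting from $\norm{\mu-\pi}_{-\alpha}\le\kappa_\alpha$, where the paper passes through Dirac measures and Proposition~\ref{prop:ipm-comparison-relaxation}\textup{(ii)--(iii)}. You then use the spectral bound $\sum_j\overline\lambda_j\upsilon_j^2\le\norm{\overline L_\phi}^{1-\alpha}\sum_j\overline\lambda_j^\alpha\upsilon_j^2$. Your detour through $\Sigma$ is unnecessary, and the claim $\norm{\overline L_\phi}\le\norm{\Sigma}$ would need a justification you did not give. The trace route you set aside already gives $\kappa^{1-\alpha}$: $\norm{\overline L_\phi}\le\Tr\overline L_\phi=\int\norm{\phi-\pi\phi}_\cH^2\,d\pi=\norm{\phi}_\pi^2-\norm{\pi\phi}_\cH^2\le\kappa^2$, which is exactly the step the paper uses.
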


\begin{proof}
By~\eqref{eq:dual-ipm-contraction},
\begin{equation}
\MMD_\alpha(\mu P,\pi)
\leq\rho_\alpha\MMD_\alpha(\mu,\pi).
\end{equation}
Moreover, the centered pointwise realization gives
\begin{equation}
\sup_{x\in\cX}\MMD_\alpha(\delta_x,\pi)
=\sup_{x\in\cX}\bnorm{\overline\phi_\alpha(x)}_\alpha
\leq\kappa_\alpha.
\end{equation}
Proposition~\ref{prop:ipm-comparison-relaxation}\textup{(ii)--(iii)} therefore proves the first claim.

\medskip
For the second claim, let \(\upsilon:=\mu P^n-\pi\). By the compatible level-one realization~\eqref{eq:mmd-one-original-feature},
\begin{align}
\MMD_\phi^2(\upsilon,0)
&=\norm{\upsilon}_{-1}^2
=\sum_{j\geq1}\overline\lambda_j\upsilon_j^2
\leq \bnorm{\overline L_\phi}^{1-\alpha} \sum_{j\geq1}\overline\lambda_j^\alpha\upsilon_j^2
= \bnorm{\overline L_\phi}^{1-\alpha} \MMD_\alpha^2(\upsilon,0).
\end{align}
Furthermore, by~\eqref{eq:centered-feature-operator-trace},
\begin{equation}
\bnorm{\overline L_\phi}
\leq\Tr(\overline L_\phi)
=\int\norm{\phi(x)-\pi\phi}_\cH^2\,\pi(dx)
\leq\kappa^2.
\end{equation}
Consequently,
\begin{equation}
\MMD_\phi(\mu P^n,\pi)
\leq \kappa^{1-\alpha} \MMD_\alpha(\mu P^n,\pi),
\end{equation}
and the result follows from~\eqref{eq:mmd-alpha-decay}.
\end{proof}

\begin{remark}
The map
\begin{equation}
J_\alpha:\cE_{-\alpha}\to\cG,
\qquad J_\alpha h:=\overline L_\phi^{\alpha/2}h,
\end{equation}
initially defined on \(\cG\), extends to an isometric isomorphism. Indeed,
if \(h=\sum_{j\geq1}h_j\overline e_j\), then
\begin{equation}
\norm{J_\alpha h}_\pi^2
=\sum_{j\geq1}\overline\lambda_j^\alpha h_j^2
=\norm{h}_{-\alpha}^2.
\end{equation}
Under this identification, the contraction condition is equivalently written as
\begin{equation}
\bnorm{ \overline L_\phi^{\alpha/2} P^\ast \overline L_\phi^{-\alpha/2} }_{\cG \to \cG}
\leq\rho_\alpha.
\label{eq:conjugated-hilbert-scale-gap}
\end{equation}
The conjugated operator is initially defined on \(\Ran(\overline L_\phi^{\alpha/2})\subseteq\cG\), and the condition means that it admits a bounded extension to \(\cG\).
\end{remark}

\begin{remark}[Relation with the \(L^2\)-spectral gap]
As \(\alpha\downarrow0\), the spaces \(\cE_\alpha\) formally approach \(\cG\). More precisely, for every \(h\in\cG\),
\begin{equation}
\norm{h}_{-\alpha}\longrightarrow\norm{h}_\pi.
\end{equation}
This convergence is pointwise, but not uniform over the \(L^2(\pi)\)-unit ball. Indeed, if \(\overline L_\phi\) has infinite rank, then \(\overline\lambda_j\to0\), and for every fixed \(\alpha>0\),
\begin{equation}
\norm{\overline e_j}_{-\alpha}
=\overline\lambda_j^{\alpha/2}\longrightarrow0,
\qquad \norm{\overline e_j}_\pi=1.
\end{equation}
Consequently, contraction in \(\cE_{-\alpha}\) for a fixed \(\alpha>0\), however small, does not by itself imply an absolute spectral gap on \(L_0^2(\pi)\). If \(\overline L_\phi\) is injective on \(L_0^2(\pi)\) and the contraction condition holds along a sequence \(\alpha_n\downarrow0\) with \(\sup_n\rho_{\alpha_n}<1\), then passage to the limit recovers an absolute \(L^2(\pi)\)-spectral gap. In particular, when \(\cG=L_0^2(\pi)\), the contraction condition in Assumption~\ref{ass:hilbert-scale-gap}\textup{(ii)} at \(\alpha=0\) is precisely the usual absolute \(L^2(\pi)\)-spectral-gap condition.
\end{remark}

\medskip
We next give a useful sufficient condition for
Assumption~\ref{ass:hilbert-scale-gap}.

\begin{corollary}[Commuting Markov and feature operators]
\label{cor:commuting-hilbert-scale}
Suppose that \(P\) admits an absolute \(L^2(\pi)\) spectral gap \( 1 - \rho > 0 \) and commutes with the centered feature operator:
\[
P\overline L_\phi=\overline L_\phi P
\qquad\text{on }L_0^2(\pi).
\]
Fix \(\alpha\geq0\) such that Assumption~\ref{ass:hilbert-scale-gap}~\textup{(i)} holds, and suppose that \(P\) preserves the corresponding fixed pointwise realization of \(\cE_\alpha\). Then Assumption~\ref{ass:hilbert-scale-gap} holds with
\begin{equation}
\rho_\alpha
:=\opnorm{P}_{\cE_\alpha}
=\opnorm{P|_{\cG}}_{\cG}
\leq\rho.
\label{eq:commuting-scale-contraction}
\end{equation}
In particular, if \(\cG=L_0^2(\pi)\), then \(\rho_\alpha=\rho\).
\end{corollary}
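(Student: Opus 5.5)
The plan is to diagonalize \(P\) and \(\overline L_\phi\) simultaneously on \(\cG\), or more precisely to use that \(P\) commutes with every bounded Borel function of \(\overline L_\phi\). In the first step, note that \(L_0^2(\pi)\) is \(P\)-invariant, as established in the background section. From \(P\overline L_\phi=\overline L_\phi P\) I will deduce that \(P\) leaves \(\ker(\overline L_\phi)\) invariant: if \(\overline L_\phi f=0\), then \(\overline L_\phi Pf=P\overline L_\phi f=0\). I will also deduce that \(P\) leaves \(\overline{\Ran(\overline L_\phi)}=\cG\) invariant, since \(P\overline L_\phi g=\overline L_\phi Pg\in\Ran(\overline L_\phi)\) and \(P\) is continuous. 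Restricted to \(\cG\), the spectral gap then gives \(\opnorm{P|_\cG}_\cG\leq\opnorm{P_0}_{L_0^2(\pi)}=\rho\).

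The second step is to transfer commutation to fractional powers. Since \(\overline L_\phi\) is a compact, positive, self-adjoint operator, every eigenspace \(\ker(\overline L_\phi-\overline\lambda_j)\) is finite dimensional, and \(P\) maps each eigenspace into itself by the same argument as above. Writing \(Q_j\) for the orthogonal spectral projection onto the \(j\)-th distinct eigenvalue, we get \(PQ_j=Q_jP\). Indeed, \(Q_j\) is a polynomial in \(\overline L_\phi\) restricted to a suitable spectral piece, or alternatively it is a norm limit of polynomials in \(\overline L_\phi\) by the continuous functional calculus on the spectrum, whose points other than \(0\) are isolated. Hence for \(f=\sum_j Q_jf\in\cE_\alpha\) we have \(Pf=\sum_j PQ_jf\) with \(PQ_jf\) in the \(j\)-th eigenspace, and therefore
\[
\norm{Pf}_\alpha^2=\sum_j\overline\lambda_j^{-\alpha}\norm{PQ_jf}_\pi^2,\qquad \norm{PQ_jf}_\pi\leq\opnorm{P|_\cG}_\cG\norm{Q_jf}_\pi .
\]
This yields \(\norm{Pf}_\alpha\leq\opnorm{P|_\cG}\norm{f}_\alpha\), so \(P\) maps \(\cE_\alpha\) to itself with \(\opnorm{P}_{\cE_\alpha}\leq\opnorm{P|_\cG}_\cG\). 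For the reverse inequality, I will use that \(\overline L_\phi^{\alpha/2}:\cG\to\cE_\alpha\) is a unitary that commutes with \(P\) on each eigenspace. Then \(\overline L_\phi^{-\alpha/2}P\overline L_\phi^{\alpha/2}=P\) on the dense span of the eigenvectors, which gives equality of the norms.

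The identification of \(\cE_\alpha\)-elements with pointwise functions is taken care of by the hypothesis that \(P\) preserves the fixed pointwise realization. Assumption~\ref{ass:hilbert-scale-gap}(i) is assumed, so (ii) holds with \(\rho_\alpha=\opnorm{P|_\cG}_\cG\leq\rho<1\). Finally, if \(\cG=L_0^2(\pi)\), then \(P|_\cG=P_0\) and \(\rho_\alpha=\rho\).

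The main obstacle is the rigor of the second step. We must show that commutation with \(\overline L_\phi\) implies that \(P\) preserves each eigenspace and acts diagonally in the block sense. This is needed even though \(P\) is not assumed normal, and the key point is that the norm computation only uses block-diagonality, not self-adjointness of \(P\). A secondary point is the convergence of \(\sum_j PQ_jf\) in \(\cE_\alpha\), which follows from the bound on each term.
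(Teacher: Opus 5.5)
Your proposal is correct and follows essentially the paper's route. Commutation with \(\overline L_\phi\) passes to its spectral projections, hence to \(\overline L_\phi^{\alpha/2}\), so the unitary \(\overline L_\phi^{\alpha/2}:\cG\to\cE_\alpha\) intertwines \(P|_{\cG}\) with \(P|_{\cE_\alpha}\), giving \(\opnorm{P}_{\cE_\alpha}=\opnorm{P|_{\cG}}_{\cG}\leq\rho\); your argument via norm limits of polynomials for the eigenspace projections just makes explicit the ``by functional calculus'' step the paper leaves implicit, and it correctly handles the possible non-normality of \(P\).
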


\begin{proof}
Commutation implies that \(P\) preserves
\(\cG=\overline{\Ran(\overline L_\phi)}\) and, by functional calculus, commutes with \(\overline L_\phi^{\alpha/2}\) on \(\cG\). Every \(f\in\cE_\alpha\) has a unique representation \(f=\overline L_\phi^{\alpha/2}g\), with \(g\in\cG\), and \(\norm{f}_\alpha=\norm{g}_\pi\). Consequently,
\begin{equation}
Pf=\overline L_\phi^{\alpha/2}Pg,
\qquad
\norm{Pf}_\alpha=\norm{Pg}_\pi
\leq\rho\norm{g}_\pi
=\rho\norm{f}_\alpha.
\end{equation}
Since \(g\mapsto\overline L_\phi^{\alpha/2}g\) is an isometric isomorphism from \(\cG\) onto \(\cE_\alpha\), taking the supremum over its unit ball proves \eqref{eq:commuting-scale-contraction}. Together with the assumed pointwise preservation and bounded realization, this verifies Assumption~\ref{ass:hilbert-scale-gap}.
\end{proof}

The pointwise-preservation requirement is automatic if \(\supp(\pi)=\cX\), the chosen realization of \(\cE_\alpha\) consists of bounded continuous functions, and \(P\) maps bounded continuous functions to bounded continuous functions: the equality obtained in \(L^2(\pi)\) then holds everywhere by continuity and full support.

\medskip
In what follows, we analyze these conditions in two different settings, both obtained by adding noise to deterministic maps on periodic state spaces. For a general background on random perturbations of dynamical systems and for a detailed spectral analysis of randomly perturbed expanding maps, we refer the reader to~\citet{Freidlin2012-fp} and~\citet{baladi1993spectra}.

\begin{example}[Noisy rotation on the torus]
\label{ex:noisy-irrational-rotation}
Let \(\cX=\bbT^d\) and consider
\begin{equation}
X_{t+1}=X_t+\omega+\xi_{t+1}\pmod 1,
\end{equation}
where \(\omega\in\bbT^d\) and the noises are i.i.d. with density \(\upsilon\in L^1(\bbT^d)\). The invariant measure is the Lebesgue measure \(\pi(dx)=dx\). For \(s>d/2\), let \(\phi_s(x)=k_s(x,\cdot)\) be the canonical feature
map of the centered Sobolev kernel
\begin{equation}
k_s(x,y)
:=\sum_{m\in\bbZ^d\setminus\{0\}}
(1+\abs{m}^2)^{-s}e^{2\pi i m\cdot(x-y)}.
\end{equation}
For every \(\alpha>0\) such that \(s\alpha>d/2\),
Assumption~\ref{ass:hilbert-scale-gap} holds with
\begin{equation}
\rho_\alpha=\rho
=\sup_{m\neq0}\abs{\wh\upsilon(m)}<1.
\end{equation}
Here the Markov operator and the centered kernel integral operator commute: translation preserves the Sobolev norms, and contraction is determined entirely by the noise.
\end{example}

\begin{example}[Noisy expanding map on the torus]
\label{ex:noisy-expanding-map}
Let \(\cX=\bbT\) and consider
\begin{equation}
X_{t+1}
=\ell X_t+\omega+\eta Z_{t+1}\pmod 1,
\end{equation}
where \(\ell\in\{2,3,\ldots\}\), \(\omega\in\bbT\), \(\eta>0\), and the \(Z_t\) are i.i.d. standard Gaussian random variables. The invariant measure is \(\pi(dx)=dx\). For \(s>1/2\), use the canonical feature map of the centered Sobolev kernel
\begin{equation}
\wt k_s(x,y)
:=\sum_{m\in\bbZ\setminus\{0\}}
\abs{m}^{-2s}e^{2\pi i m(x-y)}.
\end{equation}
The exact operator norms in the two geometries are
\begin{equation}
\rho=e^{-2\pi^2\eta^2},
\qquad \rho_\alpha
=\ell^{s\alpha}e^{-2\pi^2\eta^2}.
\end{equation}
Consequently, Assumption~\ref{ass:hilbert-scale-gap} holds whenever
\begin{equation}
s\alpha>\frac12,
\qquad \ell^{s\alpha}e^{-2\pi^2\eta^2}<1.
\label{eq:noisy-expanding-two-conditions}
\end{equation}
This example demonstrates an explicit separation between the two contraction coefficients: the expansion factor \(\ell\) enters \(\rho_\alpha\), but not \(\rho\). The \(L^2(\pi)\) spectral gap is positive for every \(\eta>0\), whereas one-step contraction in the smoother geometry requires the noise to offset the amplification caused by the deterministic map.
\end{example}
In both examples, under the indicated conditions, Proposition~\ref{prop:mmd-alpha-bound} gives
\begin{equation}
\MMD_\alpha(\mu P^n,\pi)
\leq\kappa_\alpha\rho_\alpha^n,
\qquad \mu\in\cP(\cX),\quad n\geq0.
\end{equation}
The embedding constants and a Fourier analysis proof are given in Appendix~\ref{app:fourier-examples}, together with a discussion of the separate roles of the deterministic map and the noise in Remark~\ref{rem:drift-noise}.

\begin{remark}
The two conditions~\eqref{eq:noisy-expanding-two-conditions} are equivalent to
\begin{equation}
\frac{1}{2s}
<\alpha
<\frac{2\pi^2\eta^2}{s\log\ell}.
\end{equation}
Thus an admissible \(\alpha>0\) exists if and only if
\begin{equation}
\eta^2>\frac{\log\ell}{4\pi^2},
\label{eq:expanding-noise-threshold}
\end{equation}
i.e., the noise level is sufficiently large.
\end{remark}

\begin{remark}[Related interpolation geometries]
At a conceptual level, the Hilbert scale above is part of a broader line of work connecting MMD and \(L^2\)-type geometries through operator regularization. The \emph{de-regularized maximum mean discrepancy} (DrMMD) interpolates between MMD and the \(\chi^2\)-divergence \citep{chen2025regularized}, while the regularized Stein variational gradient flow interpolates between the RKHS-based Stein flow and the \(L^2\)-based Wasserstein flow \citep{he2025regularized}. Related constructions combine RKHS control with an \(L^2\)-gradient penalty \citep{tian2026sobolev}, or use spectral filtering and truncation of covariance operators to strengthen MMD-based kernel tests \citep{hagrass2024spectral,lacroix2026nonasymptotic}. Although developed for different purposes, these approaches share the principle of regularizing a tractable kernel geometry toward stronger \(L^2\)-type information.
\end{remark}

\section{Application to statistical learning of Markov chains}
\label{sec:stats-learning}

We now apply the occupation-measure bound derived in the previous section to the task of learning the one-step prediction function of a Markov chain from a single trajectory with kernel ridge regression (KRR). The main point is that the standard KRR error decomposition naturally exposes the geometry relevant to the problem: the stability of the estimator is controlled by the convergence of the empirical occupation measure in a regularization-dependent MMD induced by the preconditioned covariance operators. This provides a direct link between the statistical learning rate and the convergence properties of the underlying Markov chain.

\medskip
Following up on~\citet{kachaiev2026learning}, we consider the problem of learning the one-step forecasting function
\begin{equation}
f_\star(x) := \int x' \, P(x, dx'),
\qquad x \in \cX,
\end{equation}
from a single trajectory of the Markov chain. More precisely, we observe
\begin{equation}
\cD_T
:= \big\{ x_0 = X_0(\omega),\, x_1 = X_1(\omega),\, \ldots,\, x_T = X_T(\omega) \big\},
\qquad \text{for a fixed} \; \omega \in \Omega.
\end{equation}
For simplicity, in what follows we restrict our attention to the case \( \cX \subseteq \bbR \). The extension to vector-valued states is rather straightforward and is largely a matter of technical bookkeeping, which is not the focus of the present discussion. We assume moreover that the invariant measure has a finite second moment, namely
\begin{equation}
\int \abs{x}^2 \, \pi(dx) < \infty .
\label{eq:pi-second-moment}
\end{equation}
Since \( \pi \) is invariant, this also ensures that
\begin{equation}
\int \abs{x'}^2 \, P(x,dx') \, \pi(dx) < \infty .
\end{equation}

\medskip \noindent
Under condition~\eqref{eq:pi-second-moment}, we define the population objective as
\begin{equation}
\cR(f)
:= \int \left( x' - f(x) \right)^2 \, \pi(dx) P(x,dx'),
\qquad f \in L^2(\pi).
\end{equation}
The function \( f_\star \) is the unique minimizer of this risk over \( L^2(\pi) \), as it encodes the conditional expectation of the next state given the present one. Consequently, for every \( f \in L^2(\pi) \), the \emph{excess risk} satisfies
\begin{equation}
\cE(f,f_\star)
:= \cR(f) - \cR(f_\star)
= \norm{f-f_\star}_\pi^2,
\qquad f \in L^2(\pi).
\end{equation}
The above relation is classical in statistical learning.

\subsection{Regularized least squares}

Let \(\cX\) be a separable metric space and let \(k: \cX \times \cX \to \bbR\) be a continuous positive-definite kernel with separable RKHS \(\cH\), and throughout this section let \( \phi(x):=k(x,\cdot) \) denote its canonical feature map. We consider the space of estimators parametrized as follows, for \( w \in \cH \) let
\begin{equation}
f_w(x) := \la w, \phi(x) \ra_\cH,
\qquad x \in \cX.
\end{equation}
For the clarity of notation we drop the subscript \( w \), where the meaning is clear from the context. The estimator is obtained as a unique minimizer of the regularized least-squares objective, i.e.,
\begin{equation}
\wh w_\lambda := \argmin_{w \in \cH} \left\{ \frac{1}{T} \sum_{t=0}^{T-1} \big(x_{t+1} - \la w, \phi(x_t) \ra_\cH\big)^2 + \lambda \norm{w}_\cH^2  \right\},
\qquad \lambda > 0.
\label{eq:hat-w-lambda-def}
\end{equation}
The corresponding estimator is denoted \( \wh f_\lambda(x) := \la \wh w_\lambda, \phi(x) \ra_\cH \). The classical result~\citep{caponnetto2007optimal} is that \( \wh w_\lambda \) admits closed form given by
\begin{equation}
\wh w_\lambda := \big(\wh \Sigma + \lambda I \big)^{-1} \wh h,
\end{equation}
where
\begin{equation}
\wh \Sigma := \frac{1}{T} \sum_{t=0}^{T-1} \phi(x_t) \otimes \phi(x_t) \in \cL_2(\cH)
\qandq
\wh h := \frac{1}{T} \sum_{t=0}^{T-1} x_{t+1} \, \phi(x_t) \in \cH.
\end{equation}
For the sake of notational simplicity, for \( \lambda > 0 \) we write \( \wh \Sigma_\lambda := \wh \Sigma + \lambda I \). To streamline the discussion, we also introduce population-level quantities
\begin{equation}
\Sigma := \int \phi(x) \otimes \phi(x) \, \pi(dx)
\qandq
h := \int x' \, \phi(x) \, \pi(dx) P(x, dx'),
\end{equation}
together with the population regularized estimator
\begin{equation}
w_\lambda := \Sigma_\lambda^{-1} h,
\qquad \text{where }  \Sigma_\lambda := \Sigma + \lambda I.
\end{equation}
Let \( S:\cH\to L^2(\pi) \) be an operator acting as \( Sw:=f_w \), so that \(\Sigma=S^\ast S\).
\subsection{Learning guarantees}

To derive a bound on the excess risk of the estimator~\eqref{eq:hat-w-lambda-def}, we collect the assumptions into three blocks: assumptions on the Markov chain, on the feature map and its induced geometry, and the properties of the statistical learning problem.

\paragraph{Markov chain.} Let \( (X_t)_{t \ge 0} \) be a time-homogeneous Markov process  taking values in measurable space \( (\cX, \cB(\cX)) \). Let \( P: \cX \times \cB(\cX) \to [0, 1] \) denote corresponding Markov transition kernel.

\begin{assumption}[Markov chain]
\label{ass:markov-chain}
We assume
\begin{enumerate}[label=\textup{(\roman*)}]
\item \textbf{Stationarity.} The Markov operator \(P\) admits an invariant measure \( \pi \in \cP_2(\cX) \). The process \((X_t)_{t\geq 0}\) is initialized at stationarity, i.e., \( X_0 \sim \pi \).

\item \textbf{Spectral gap.} The Markov operator \(P\) admits a strictly positive absolute \(L^2(\pi)\) spectral gap \( 1-\rho>0 \).

\item \textbf{Bounded innovations.} There exist constants \(M,\sigma<\infty\) such that, for every \(x\in\cX\),
\[
\abs{x'-f_\star(x)}\leq M, \; P(x,\cdot)\text{-a.s.},
\qquad \text{and} \qquad
\int \abs{x'-f_\star(x)}^2\,P(x,dx')
\leq \sigma^2.
\]
\end{enumerate}
\end{assumption}

\noindent
The finite-second-moment condition ensures that \( f_\star \in L^2(\pi) \) by conditional Jensen's inequality and invariance of \(\pi\). Notice that bounded innovations are not needed for this conclusion by themselves; they control the conditional regression noise used in the learning argument. Condition~\textup{(iii)} is therefore a transition-kernel assumption relative to the prediction problem, rather than a mixing assumption.

\paragraph{Feature map.} To state the next assumption on the choice of feature map, we define an auxiliary object. Let
\begin{equation}
\Phi(x):=\phi(x)\otimes\phi(x)\in\cL_2(\cH).
\end{equation}
Note that \( \cL_2(\cH) \) endowed with the Hilbert--Schmidt norm is itself a separable Hilbert space. Let \(\left(\cE_\beta,\norm{\cdot}_\beta\right)_{\beta\geq0}\) be the centered Hilbert scale~\eqref{eq:centered-hilbert-scale} induced by \(\Phi\).

\begin{assumption}[Feature map and induced geometry]
\label{ass:feature-map}
We assume the following.
\begin{enumerate}[label=\textup{(\roman*)}]
\item \textbf{Boundedness.}
There exists \(\kappa<\infty\) such that
\begin{equation}
\sup_{x\in\cX}\norm{\phi(x)}_\cH
\leq\kappa.
\end{equation}

\item \textbf{Hilbert-scale contraction for the unregularized covariance feature.} There exist \(\alpha\in(0,1]\), \(\rho_\alpha\in [0,1) \), and \(\kappa_\alpha<\infty\) such that \(\cE_\alpha\) admits a measurable pointwise realization with evaluation representer \( \Phi_\alpha:\cX\to\cE_\alpha \) satisfying
\begin{equation}
\sup_{x\in\cX}
\norm{\Phi_\alpha(x)}_\alpha
\leq \kappa_\alpha.
\end{equation}
Moreover, the Markov operator leaves this fixed pointwise realization of \(\cE_\alpha\) invariant and satisfies
\begin{equation}
\norm{Ph}_\alpha
\leq \rho_\alpha\norm{h}_\alpha,
\qquad h\in\cE_\alpha.
\end{equation}
\end{enumerate}
\end{assumption}

\noindent
Assumption~\ref{ass:feature-map}\textup{(ii)} is precisely the Hilbert-scale contraction condition of Assumption~\ref{ass:hilbert-scale-gap} applied to the feature map \(\Phi\); we refer the reader to the discussion following that assumption for further details. We emphasize that the assumption is induced by the unregularized covariance feature \( \Phi \). In particular, the constants \(\alpha\), \(\rho_\alpha\), and \(\kappa_\alpha\) do not depend on the regularization parameter \(\lambda\).

\paragraph{Learning problem.} The following assumptions describe the interaction between the geometry induced by the feature map and the regularity of the target function. They are standard in the fields of statistical learning and kernel methods.

\begin{assumption}[Learning problem]
\label{ass:learning-problem}
We assume
\begin{enumerate}[label=\textup{(\roman*)}]
\item \textbf{Source condition.}
There exist \(r\in[1/2,1]\) and \(g_\star\in\cH\) such that
\[
w_\star := \Sigma^{r-1/2}g_\star,
\qquad f_\star
= S w_\star.
\]
We denote \(G_\star:=\norm{g_\star}_\cH\).

\item \textbf{Polynomial eigenvalue decay.}
There exist constants \(B>0\) and \(b>1\) such that
\[
\lambda_j(\Sigma)\leq B j^{-b},
\qquad \forall j\geq 1,
\]
where \((\lambda_j(\Sigma))_{j\geq 1}\) is the non-increasing sequence of eigenvalues of \(\Sigma\).
\end{enumerate}
\end{assumption}

\noindent
The source condition implies well-specifiedness. Since \(\ker(S)=\ker(\Sigma)\), we may choose \(g_\star\) so that
\(w_\star \in \overline{\Ran(\Sigma)}=(\ker S)^\perp\). Consequently, \(w_\star\) is the minimum-\(\cH\)-norm representer of the \(L^2(\pi)\)-equivalence class of \(f_\star\):
\begin{equation}
\norm{w_\star}_\cH
= \min\left\{
\norm{w}_\cH:
f_w=f_\star\ \text{in }L^2(\pi)
\right\}.
\end{equation}
In particular, \( h=S^\ast f_\star=\Sigma w_\star \).

\paragraph{Discussion of the assumptions.}
Stationarity and the existence of an absolute \(L^2(\pi)\) spectral gap are standard quantitative ergodicity conditions for Markov chains; see, e.g., ~\citet{douc2018markov,fan2021hoeffding,jiang2026bernstein}. Stationarity avoids an additional dependence on the initial distribution, while the spectral gap controls the variance accumulated along the trajectory. The absolute spectral-gap condition does not, by itself, imply that \(\pi\) is the unique invariant probability measure on the whole state space. When the feature map is characteristic, uniqueness may instead follow from the uniform MMD convergence assumption, as discussed above, but this is not needed for the main arguments. The boundedness of the feature map is standard in KRR analysis and is satisfied, in particular, by commonly used bounded kernels. The Hilbert-scale contraction for the covariance feature plays a different role and is specific to the framework developed here: as will be shown further, it provides control of the stability term in the KRR decomposition. The well-specified model, the source condition, and the polynomial eigenvalue decay are likewise classical assumptions in the analysis of regularized least squares; see, e.g.,~\citet{caponnetto2007optimal,rudi2015less}.

\paragraph{Bounded domain.}
Since \(f_\star=Sw_\star\), the bounds on features and innovations give
\begin{equation}
\abs{X_{t+1}}
\leq \abs{f_\star(X_t)}+M
\leq \kappa\norm{w_\star}_\cH+M
=:R_\star \; \text{a.s.}
\end{equation}
By stationarity, \(\pi([-R_\star,R_\star])=1\), so the assumptions effectively restrict the chain to a compact interval. An extension to unbounded, for example sub-Gaussian or sub-exponential, innovations would require uniform conditional Bernstein-type moment bounds for
\begin{equation}
Z_t
:= \bigl(X_{t+1}-f_\star(X_t)\bigr)\phi_\lambda(X_t),
\end{equation}
together with a corresponding concentration inequality for Hilbert-valued martingales. We leave this extension for future work.

\medskip
With these assumptions in place, we are now ready to state the main result of this section.

\begin{theorem}
\label{thm:final-sample-rate}
Let Assumptions~\ref{ass:markov-chain}--\ref{ass:learning-problem} hold and set \(p:=1/b\). With the choice \( \lambda_T \asymp T^{-\frac{1}{2r+p}} \), for every \(\delta \in (0,1)\) and for \(T\) large enough that~\eqref{eq:t-lower-bound} holds with \( \lambda = \lambda_T \), the estimator~\eqref{eq:hat-w-lambda-def} satisfies, with probability at least \(1-\delta\),
\[
\cE\big(\wh f_{\lambda_T},f_\star\big)
:= \bnorm{\wh f_{\lambda_T}-f_\star}_\pi^2
\lesssim T^{-\frac{2r}{2r+p}},
\]
up to logarithmic factors in \(1/\delta\).
\end{theorem}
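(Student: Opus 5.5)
We follow the standard KRR error decomposition of \citet{caponnetto2007optimal}, and the new work is in controlling the stability term through the occupation-measure bound. Write
\[
\wh f_\lambda-f_\star=S(\wh w_\lambda-w_\lambda)+S(w_\lambda-w_\star).
\]
The bias term is classical: the source condition gives \(\norm{S(w_\lambda-w_\star)}_\pi=\norm{\Sigma^{1/2}\lambda\Sigma_\lambda^{-1}\Sigma^{r-1/2}g_\star}_\cH\le\lambda^r G_\star\) for \(r\in[1/2,1]\).

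For the estimation term we use \(\wh w_\lambda-w_\lambda=\wh\Sigma_\lambda^{-1}(\wh h-\wh\Sigma w_\lambda-\lambda w_\lambda)\). This gives
\[
\norm{S(\wh w_\lambda-w_\lambda)}_\pi\le \bnorm{\Sigma^{1/2}\Sigma_\lambda^{-1/2}}\,\bnorm{\Sigma_\lambda^{1/2}\wh\Sigma_\lambda^{-1}\Sigma_\lambda^{1/2}}\,\bnorm{\Sigma_\lambda^{-1/2}\bigl(\wh h-\wh\Sigma_\lambda w_\lambda\bigr)}_\cH .
\]
The middle factor is at most \(2\) as soon as \(\norm{\Sigma_\lambda^{-1/2}(\wh\Sigma-\Sigma)\Sigma_\lambda^{-1/2}}\le 1/2\), via a Neumann series. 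We split the last vector into two pieces.
\begin{itemize}
\item[(a)] The noise martingale \(\frac1T\sum_t (X_{t+1}-f_\star(X_t))\Sigma_\lambda^{-1/2}\phi(X_t)\). Its increments are bounded by \(M\kappa\lambda^{-1/2}\) and its predictable variation is at most \(\sigma^2 T\,\cN(\lambda)\). The Pinelis--Bernstein inequality then gives \(\sigma\sqrt{\cN(\lambda)\log(1/\delta)/T}+M\kappa\log(1/\delta)/(\sqrt\lambda T)\). No mixing is needed here.
\item[(b)] The term \(\Sigma_\lambda^{-1/2}(\wh\Sigma-\Sigma)(w_\star-w_\lambda)\). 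Since \(h=\Sigma w_\star\), this is an occupation-measure fluctuation \(\frac1T\sum_t g(X_t)-\pi g\) of the bounded \(\cH\)-valued feature \(g(x)=\Sigma_\lambda^{-1/2}\phi(x)\,(f_\star-f_{w_\lambda})(x)\). We bound it by \(\MMD_{g}(\wh\mu_T,\pi)\) and apply Theorem~\ref{thm:mmd-occupation-measure-whp}. Its leading term is \(\norm{g}_\pi\sqrt{\tau_\rel\log/T}\le\kappa\lambda^{-1/2}\lambda^r G_\star\sqrt{\tau_\rel/T}\), which is lower order.
\end{itemize}
For (b) the relaxation constant \(\tau_\theta\) must be available. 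We obtain it from Proposition~\ref{prop:ipm-comparison-relaxation}(i)-type comparisons with the covariance geometry below. If that fails, Corollary~\ref{cor:no-gap}-style crude constants are enough, because this term only needs to be \(o(\lambda^{r})\).

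The key step is the covariance concentration. By the identity in the introduction,
\[
\norm{\Sigma_\lambda^{-1/2}(\wh\Sigma-\Sigma)\Sigma_\lambda^{-1/2}}_{\cL_2}=\MMD_{\Phi_\lambda}(\wh\mu_T,\pi),\qquad \Phi_\lambda=\Sigma_\lambda^{-1/2}\Phi\,\Sigma_\lambda^{-1/2}.
\]
We apply Theorem~\ref{thm:mmd-occupation-measure-whp} with the feature \(\Phi_\lambda\). This needs two inputs.
\begin{itemize}
\item[(1)] The second moment \(\norm{\Phi_\lambda}_\pi^2=\int\norm{\Sigma_\lambda^{-1/2}\phi(x)}^4\,d\pi\le \kappa^2\lambda^{-1}\cN(\lambda)\).
\item[(2)] A uniform relaxation constant \(\tau_\theta(\lambda)\). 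Here Assumption~\ref{ass:feature-map}(ii) is used. The map \(A_\lambda:B\mapsto\Sigma_\lambda^{-1/2}B\Sigma_\lambda^{-1/2}\) is bounded on \(\cL_2(\cH)\) with norm \(\lambda^{-1}\). Proposition~\ref{prop:mmd-affine-transformation} then gives \(\MMD_{\Phi_\lambda}\le\lambda^{-1}\MMD_\Phi\). Proposition~\ref{prop:mmd-alpha-bound} applied to \(\Phi\), with \(\norm{\Phi}_\infty\le\kappa^2\), gives \(\MMD_\Phi(\mu P^n,\pi)\le\kappa^{2(1-\alpha)}\kappa_\alpha\rho_\alpha^n\). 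Hence \(\tau_\theta(\lambda)\lesssim\lambda^{-1}\kappa^{2(1-\alpha)}\kappa_\alpha/(1-\rho_\alpha)\).
\end{itemize}
Together these give, with probability \(1-\delta/3\),
\[
\MMD_{\Phi_\lambda}(\wh\mu_T,\pi)\lesssim\sqrt{\frac{\tau_\rel\cN(\lambda)\log(4/\delta)}{\lambda T}}+\frac{\sqrt{\tau_\rel}\log(4/\delta)}{\lambda T(1-\rho_\alpha)}.
\]
Condition~\eqref{eq:t-lower-bound} should be exactly the requirement that this is \(\le1/2\), i.e.\ \(T\gtrsim\tau_\rel\cN(\lambda)\lambda^{-1}\log(4/\delta)\). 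With \(\cN(\lambda)\lesssim\lambda^{-p}\) and \(\lambda_T=T^{-1/(2r+p)}\), this holds for large \(T\) because \(1+p<2r+p\) when \(r>1/2\). The boundary case \(r=1/2\) is handled by the stated log factors and the "large enough \(T\)" clause.

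Combining everything by a union bound:
\[
\cE(\wh f_\lambda,f_\star)\lesssim\lambda^{2r}+\frac{\cN(\lambda)}{T}+\frac{1}{\lambda T^2}+\frac{\tau_\rel\lambda^{2r-1}}{T}.
\]
With \(\cN(\lambda)\lesssim\lambda^{-p}\) (from \(b>1\)) and \(\lambda_T\asymp T^{-1/(2r+p)}\), the first two terms balance at \(T^{-2r/(2r+p)}\) and the rest are of lower order.

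The main obstacle is step (2): the \(\lambda\)-dependence of the relaxation constant of \(\Phi_\lambda\). The crude factor \(\lambda^{-1}\) only enters the \(T^{-1}\) correction, so it must be checked that it does not spoil the lower-bound condition. If it does, we will instead work directly in \(\cE_\alpha\), bounding \(\MMD_{\Phi_\lambda}\) by \(\MMD_\alpha\) through \(\norm{A_\lambda\overline L_\Phi^{\alpha/2}}\) with an interpolation estimate.
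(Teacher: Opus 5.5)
Your step (a) has a genuine gap. The predictable quadratic variation of the noise martingale only satisfies \(\la M\ra_T\le\sigma^2\sum_{s=0}^{T-1}\norm{\Sigma_\lambda^{-1/2}\phi(X_s)}_\cH^2\). This is a random additive functional of the trajectory, and \(\sigma^2T\cN(\lambda)\) bounds only its stationary mean, not the quantity almost surely. The only deterministic bound is \(\sigma^2\kappa^2T/\lambda\). Using it in Pinelis--Bernstein replaces \(\sqrt{\cN(\lambda)/T}\) by \(\sqrt{1/(\lambda T)}\), which loses the capacity exponent and yields only \(T^{-2r/(2r+1)}\). Markov's inequality would keep the rate but cost a factor \(\delta^{-1/2}\), contradicting the logarithmic dependence on \(1/\delta\). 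So the claim that no mixing is needed here is false for this argument.

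The missing step, which is what the paper does, has two parts:
\begin{itemize}
\item Set \(g_\lambda:=\norm{\Sigma_\lambda^{-1/2}\phi}_\cH^2\). Concentrate \(\sum_s g_\lambda(X_s)\) around \(T\cN(\lambda)\) with the stationary Bernstein inequality of Theorem~\ref{thm:jiang-bernstein-stationary}, using \(\norm{g_\lambda}_\infty\le\kappa^2/\lambda\) and \(\Var_\pi(g_\lambda)\le\kappa^2\cN(\lambda)/\lambda\).
\item Invoke the localized Pinelis--Bernstein inequality of Theorem~\ref{thm:pinelis-bernstein} on that event; the standard version requires an almost-sure bound on \(\la M\ra_T\).
\end{itemize}
This is where \(\tau_\rel\) enters the stochastic term. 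It produces the corrections of order \((\tau_\rel\cN(\lambda)/\lambda)^{1/4}T^{-3/4}\) and \(\sqrt{\tau_\rel/\lambda}\,T^{-1}\) in~\eqref{eq:error-bound-final}. Both are lower order at \(\lambda_T\), since \(2r+2p\ge1\) and \(r+p\ge1/2\), so with this repair your rate is recovered.

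Your covariance-stability step (1)--(2) is essentially the paper's argument. The real difference is the decomposition. The paper centers at \(w_\star\) rather than \(w_\lambda\):
\[
\Sigma^{1/2}(\wh w_\lambda-w_\star)=\Sigma^{1/2}\wh\Sigma_\lambda^{-1}\bigl(\wh h-\wh\Sigma w_\star-\lambda w_\star\bigr).
\]
The residual is then the pure noise martingale plus the deterministic vector \(\lambda w_\star\). Its contribution \(\lambda\norm{\Sigma_\lambda^{-1/2}w_\star}_\cH\le\lambda^rG_\star\) is simply multiplied by the stability factor \(2\). Your cross term (b) never appears, and only one MMD bound, for \(\Phi_\lambda\), is needed.

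Your treatment of (b) is still sound if done carefully. Apply Proposition~\ref{prop:mmd-affine-transformation} to the map \(A\mapsto\Sigma_\lambda^{-1/2}A(w_\star-w_\lambda)\) from \(\cL_2(\cH)\) to \(\cH\), whose norm is at most \(\lambda^{-1/2}\norm{w_\star-w_\lambda}_\cH\le\lambda^{r-1}G_\star\), and combine it with Proposition~\ref{prop:mmd-alpha-bound}. This makes (b) of order \(\lambda^r\bigl(\sqrt{\tau_\rel/(\lambda T)}+\sqrt{\tau_\rel}/(\lambda T)\bigr)\), up to constants and logarithms.

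Your fallback through Corollary~\ref{cor:no-gap} would not suffice in general. It gives \(\lambda^{r-1}T^{-1/2}\), which is \(O(\lambda_T^r)\) only when \(2r+p\ge2\).

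Finally, at \(r=1/2\) the ratio \(\cN(\lambda_T)/(\lambda_T T)\) does not decay in \(T\). Taking \(T\) large therefore cannot enforce \(\norm{\Delta_{\lambda_T}}\le1/2\), and the paper instead enlarges the constant in \(\lambda_T\asymp T^{-1/(2r+p)}\).
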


The assumptions of Theorem~\ref{thm:final-sample-rate} are simultaneously satisfied by the noisy expanding map of Example~\ref{ex:noisy-expanding-map} with \(\ell=2\), \(\omega=0\), Gaussian noise scale \(\eta:=\sqrt{\log 4 / (2\pi^2)}\), and uniform stationary initialization. Representing \(\bbT\) by \([0,1)\), yields
\begin{equation}
X_{t+1}=2X_t+\eta Z_{t+1}\pmod 1,
\qquad X_0\sim\pi,
\; \pi(dx)=dx,
\end{equation}
where the \(Z_t\) are independent standard Gaussian random variables, independent of \(X_0\). For the periodic Sobolev kernel
\begin{equation}
k(x,y)
=\sum_{m\in\bbZ}(1+m^2)^{-1}e^{2\pi i m(x-y)}
\end{equation}
one may take
\begin{equation}
\rho=\frac14,
\qquad \text{and} \qquad
\rho_\alpha
=\frac14\left(\frac85\right)^{\alpha/2},
\quad \text{for } \alpha\in(1/2,1],
\end{equation}
together with \(M=1\), \(\sigma^2=1/4\), and \(b=2\), hence \(p=1/2\). Here \(\sigma^2\) denotes the conditional innovation-variance bound. The source condition holds for every \(r\in[1/2,1]\), and the feature, embedding, and source constants are finite and independent of the regularization parameter \(\lambda\). In particular, taking \(\alpha=1\) gives \(\rho_1=1/\sqrt{10}>\rho\), illustrating the distinction between contraction in \(L^2(\pi)\) and in the geometry induced by the covariance feature map \(\Phi\). The complete construction and verification are given in Appendix~\ref{app:noisy-doubling-learning}.

\medskip
The proof of Theorem~\ref{thm:final-sample-rate} proceeds by first establishing the following non-asymptotic error bound and then optimizing it with respect to the regularization parameter. The full proof is given in Appendix~\ref{app:sample-rate}.

\begin{proposition}[Non-asymptotic error bound]
\label{prop:krr-error-bound}
Let Assumptions~\ref{ass:markov-chain}--\ref{ass:learning-problem} hold. Fix \( \lambda > 0 \). Set
\[
B_\alpha:=\kappa^{2(1-\alpha)}\kappa_\alpha.
\]
Suppose that
\begin{equation}
T
\geq \max\left\{ \frac{64 \kappa^2 \tau_{\rel} \cN(\lambda) \log(8/\delta)}{\lambda}, \,
\frac{48B_\alpha\sqrt{\tau_{\rel}} \log(8/\delta)} {(1-\rho_\alpha)\lambda} \right\}.
\label{eq:t-lower-bound}
\end{equation}
Then, for every \( \delta \in (0,1) \) with probability at least \(1-\delta\),
\begin{align}
\bnorm{\wh f_\lambda-f_\star}_\pi
\leq\;&
2\lambda^rG_\star
+ 2\sigma\sqrt{\frac{2 \cN(\lambda) \log(8/\delta)}{T}} \notag \\
&+ 4\sigma \left( \frac{\kappa^2 \tau_\rel \, \cN(\lambda)}{\lambda} \right)^{1/4} \frac{\log(8/\delta)^{3/4}}{T^{3/4}}
+ \frac{4\sqrt{5 \tau_\rel}\,\sigma\kappa \log(8/\delta)}{\sqrt{\lambda}\,T}
+ \frac{4\kappa M \log(8/\delta)}{3\sqrt{\lambda}\,T}.
\label{eq:error-bound-final}
\end{align}
\end{proposition}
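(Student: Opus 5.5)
We follow the classical operator decomposition of KRR, with the Markov-chain specific parts handled by Theorem~\ref{thm:mmd-occupation-measure-whp} and a Hilbert-valued martingale inequality.

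\textbf{Step 1: error decomposition.} Write \(\wh h=\wh\Sigma w_\star+\wh\varepsilon\), where
\[
\wh\varepsilon:=\frac1T\sum_{t=0}^{T-1}\bigl(X_{t+1}-f_\star(X_t)\bigr)\phi(X_t),
\]
which uses \(f_\star=Sw_\star\) pointwise on the support of \(\pi\). Then
\[
\wh w_\lambda-w_\star=\wh\Sigma_\lambda^{-1}\wh\varepsilon-\lambda\wh\Sigma_\lambda^{-1}w_\star .
\]
Since \(\norm{Sv}_\pi=\norm{\Sigma^{1/2}v}_\cH\le\norm{\Sigma_\lambda^{1/2}v}_\cH\), we get
\[
\norm{\wh f_\lambda-f_\star}_\pi\le \opnorm{\Sigma_\lambda^{1/2}\wh\Sigma_\lambda^{-1}\Sigma_\lambda^{1/2}}\Bigl(\norm{\Sigma_\lambda^{-1/2}\wh\varepsilon}_\cH+\lambda\norm{\Sigma_\lambda^{-1/2}w_\star}_\cH\Bigr).
\]
The source condition gives \(\lambda\norm{\Sigma_\lambda^{-1/2}\Sigma^{r-1/2}g_\star}\le\lambda^rG_\star\) for \(r\le1\). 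It therefore suffices to show that the stability factor is at most \(2\) on an event of probability \(1-\delta/2\). This is where the leading constant \(2\) in front of \(\lambda^rG_\star\) and the noise term comes from.

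\textbf{Step 2: stability via MMD.} Write \(A:=\Sigma_\lambda^{-1/2}(\wh\Sigma-\Sigma)\Sigma_\lambda^{-1/2}\). If \(\opnorm{A}\le1/2\), a Neumann series gives the factor \(2\). We bound \(\opnorm{A}\le\norm{A}_{\cL_2}=\MMD_{\Phi_\lambda}(\wh\mu_T,\pi)\) and apply Theorem~\ref{thm:mmd-occupation-measure-whp} to the feature map \(\Phi_\lambda\). This requires two inputs.
\begin{itemize}
\item[(a)] The second moment: \(\norm{\Phi_\lambda}_\pi^2=\int\norm{\Sigma_\lambda^{-1/2}\phi(x)}^4\,\pi(dx)\le \kappa^2\cN(\lambda)/\lambda\).
\item[(b)] Uniform MMD relaxation for \(\Phi_\lambda\). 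We have \(\Phi_\lambda=A_\lambda\Phi\) with \(A_\lambda(B)=\Sigma_\lambda^{-1/2}B\Sigma_\lambda^{-1/2}\) and \(\opnorm{A_\lambda}\le\lambda^{-1}\). Proposition~\ref{prop:mmd-affine-transformation} and Proposition~\ref{prop:mmd-alpha-bound} applied to \(\Phi\), with \(\norm{\Phi}_\infty\le\kappa^2\), then give \(C=\kappa^{2(1-\alpha)}\kappa_\alpha/\lambda=B_\alpha/\lambda\) and \(\theta=\rho_\alpha\). Hence \(\tau_\theta=B_\alpha/((1-\rho_\alpha)\lambda)\).
\end{itemize}
Plugging these into the theorem at confidence \(\delta/2\) yields
\[
\opnorm{A}\le2\sqrt{\frac{\kappa^2\cN(\lambda)\tau_\rel\log(8/\delta)}{\lambda T}}+\frac{12B_\alpha\sqrt{\tau_\rel}\log(8/\delta)}{(1-\rho_\alpha)\lambda T}.
\]
Each summand is at most \(1/4\) under \eqref{eq:t-lower-bound}.

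\textbf{Step 3: noise term.} The sum \(T\Sigma_\lambda^{-1/2}\wh\varepsilon=\sum_t\eta_{t+1}\phi_\lambda(X_t)\) is an \(\cH\)-valued martingale with respect to \(\cF_{t+1}=\sigma(X_0,\dots,X_{t+1})\). Its increments are bounded by \(M\kappa/\sqrt\lambda\), and its predictable quadratic variation is at most
\[
\sigma^2\sum_t\norm{\phi_\lambda(X_t)}^2=\sigma^2T\,\wh\mu_T\bigl(\norm{\phi_\lambda}^2\bigr).
\]
We control \(\wh\mu_T(\norm{\phi_\lambda}^2)\le\cN(\lambda)+\abs{(\wh\mu_T-\pi)\norm{\phi_\lambda}^2}\) with a scalar stationary Bernstein inequality for spectral-gap chains. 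The scalar function has \(L^2(\pi)\) norm at most \(\sqrt{\kappa^2\cN(\lambda)/\lambda}\) and sup norm at most \(\kappa^2/\lambda\). This produces the variance term \(\cN(\lambda)\) and two lower-order corrections. Feeding this variance bound into the localized Pinelis--Bernstein inequality at confidence \(\delta/4\) each, and using \(\sqrt{a+b}\le\sqrt a+\sqrt b\), gives exactly the four noise terms in \eqref{eq:error-bound-final}:
\begin{itemize}
\item the \(T^{-1/2}\) term, from the square root of \(\cN(\lambda)\);
\item the \(T^{-3/4}\) term, from the square root of the \(\sqrt{\tau_\rel\kappa^2\cN(\lambda)/(\lambda T)}\) fluctuation;
\item the \(\sqrt{\tau_\rel}/(\sqrt\lambda T)\) term, from the square root of the sup-norm correction;
\item the \(\kappa M/(3\sqrt\lambda T)\) term, from the increment bound.
\end{itemize}
A union bound over the three events completes the argument.

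\textbf{Main obstacle.} The main obstacle is Step 2(b). We must check that the Hilbert-scale contraction for \(\Phi\) transfers to \(\Phi_\lambda\) with only a \(1/\lambda\) loss, so that the \(\lambda\)-dependence enters linearly in \(\tau_\theta\) and matches the second condition in \eqref{eq:t-lower-bound}. Bookkeeping of the constants \(64\), \(48\) and \(\log(8/\delta)\) is routine by comparison.
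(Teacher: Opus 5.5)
Your proposal is correct and follows essentially the same route as the paper. It uses the same bias/stability/stochastic decomposition and the identification $\norm{\Delta_\lambda}_{\cL_2(\cH)}=\MMD_{\Phi_\lambda}(\wh\mu_T,\pi)$ with $\tau_\theta=B_\alpha/((1-\rho_\alpha)\lambda)$ fed into Theorem~\ref{thm:mmd-occupation-measure-whp} at level $\delta/2$, followed by the scalar Bernstein plus localized Pinelis--Bernstein treatment of the residual martingale, with the same probability split and constants. The differences are cosmetic: you bound the stability factor through $\Sigma_\lambda^{1/2}$ instead of $\Sigma^{1/2}$ on the left, which gives the same bound $(1-\norm{\Delta_\lambda})^{-1}$. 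Also, the identity $f_\star(X_t)=f_{w_\star}(X_t)$ should be justified by $\pi$-a.e.\ equality together with stationarity, not by equality on $\supp(\pi)$.
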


\begin{proof}[Sketch of the proof]
The empirical normal equation separates the error into a stability factor, a regularization bias, and a stochastic term. The source condition bounds the bias by \(\lambda^r G_\star\). To control stability, set
\begin{equation}
\Delta_\lambda
:=\Sigma_\lambda^{-1/2}(\wh\Sigma-\Sigma)\Sigma_\lambda^{-1/2},
\qquad \Phi_\lambda(x)
:=\Sigma_\lambda^{-1/2}\Phi(x)\Sigma_\lambda^{-1/2}.
\end{equation}
The key identity is
\begin{equation}
\norm{\Delta_\lambda}_{\cL_2(\cH)}
=\MMD_{\Phi_\lambda}(\wh\mu_T,\pi).
\end{equation}
Hilbert-scale contraction transfers from \(\Phi\) to a uniform MMD relaxation bound for \(\Phi_\lambda\). Theorem~\ref{thm:mmd-occupation-measure-whp} and condition~\eqref{eq:t-lower-bound} then ensure \(\norm{\Delta_\lambda}_{\cL_2(\cH)}\leq1/2\) with high probability, which bounds the stability factor by \(2\). The stochastic term is an average of martingale differences
\begin{equation}
\label{eq:reg-residual}
Z_t
:= (X_{t+1}-f_\star(X_t))\Sigma_\lambda^{-1/2}\phi(X_t).
\end{equation}
We control their predictable quadratic variation~\eqref{eq:quadratic-variance} by the stationary Bernstein inequality and apply the localized Pinelis--Bernstein inequality. Combining these estimates by a union bound proves the result. 
\end{proof}

\paragraph{Discussion.}
The non-asymptotic form of the bound in Proposition~\ref{prop:krr-error-bound} contains more information than the asymptotic rate in Theorem~\ref{thm:final-sample-rate}. In particular, it separates the regularization bias from the stochastic error and makes explicit how the properties of the Markov chain affect the finite-sample performance of the estimator.

\medskip
The leading stochastic term is
\begin{equation}
\sqrt{\frac{\cN(\lambda)}{T}},
\end{equation}
up to the noise and confidence factors. Somewhat surprisingly, this term does not contain the relaxation time \(\tau_{\rel} \).
This is a consequence of the martingale structure of the regression residual \( Z_t \) defined in~\eqref{eq:reg-residual}.
Dependence on relaxation time \( \tau_\rel \) enters instead through the stability of the empirical covariance operator. More precisely, up to logarithmic factors, the preconditioned covariance fluctuation satisfies
\begin{equation}
\norm{\Delta_\lambda}_{\cL_2(\cH)}
\lesssim \sqrt{\frac{\tau_{\rel} \, \cN(\lambda)}{\lambda T}}
+ \frac{B_\alpha\sqrt{\tau_{\rel}}}{(1-\rho_\alpha)\lambda T}.
\end{equation}
Thus the covariance component is governed by the effective sample size \( T_\eff \) introduced in~\eqref{eq:t-eff-def}. For a given Markov chain, the quantities \(\tau_{\rel}\), \(B_\alpha\), and \((1-\rho_\alpha)^{-1}\) are constants, so the asymptotic exponent remains the same as in the i.i.d. case. Their effect is nevertheless important at finite sample sizes: the sample size required by \eqref{eq:t-lower-bound} grows linearly in \( \tau_{\rel} \), and the lower-order terms of \eqref{eq:error-bound-final} carry factors \( \tau_{\rel}^{1/4} \) and \( \tau_{\rel}^{1/2} \). Since \( \tau_{\rel} \to \infty \) as \( \rho \uparrow 1 \), these may be arbitrarily large.

\medskip
Finally, the \(T^{-3/4}\) term is a lower-order correction arising from the concentration of the predictable quadratic variation~\eqref{eq:quadratic-variance}. It does not alter the leading statistical rate: under \(r\geq1/2\) and polynomial effective dimension growth, it is dominated by the balance between the regularization bias and the leading stochastic term.

\medskip
In the i.i.d. regime, corresponding formally to \(P(x,\cdot)=\pi\), one has \(\rho=0\) and hence \(\tau_{\rel}=1\). The kernel-induced contraction is then immediate after one step, and the bound reduces, up to lower-order Bernstein corrections, to the familiar KRR form
\begin{equation}
\lambda^r G_\star
+ \sqrt{\frac{\cN(\lambda)}{T}}.
\end{equation}
Under the polynomial effective-dimension bound \(\cN(\lambda)\lesssim\lambda^{-p}\), balancing these two terms gives \( \lambda_T\asymp T^{-1/(2r+p)} \), and, therefore
\begin{equation}
\bnorm{\wh f_{\lambda_T}-f_\star}_\pi
\lesssim T^{-r/(2r+p)},
\qquad \cE(\wh f_{\lambda_T},f_\star)
\lesssim T^{-2r/(2r+p)}.
\end{equation}
This coincides with the classical source--capacity rate for i.i.d. KRR and, over the corresponding classes of i.i.d. regression problems, with the minimax-optimal rate; see, for example,~\citet{caponnetto2007optimal}.

\medskip
For comparison, the real-valued setting in~\citet[Section~3]{kachaiev2026learning} studies the same one-step prediction problem from a single trajectory, but under different assumptions on the underlying Markov process. The present result is not a strict improvement under identical assumptions: it gives a sharper, geometry- and capacity-sensitive rate for stationary trajectories, whereas~\citet{kachaiev2026learning} provides the more general treatment of nonstationary initialization under stronger assumption on convergence.
\section{Conclusion}

In this paper, we studied the convergence of empirical occupation measures of Markov chains in MMD geometry. Under an absolute \(L^2(\pi)\) spectral gap and uniform relaxation in the corresponding MMD, we obtained a non-asymptotic high-probability bound whose leading term has the central-limit scale
\begin{equation}
\frac{\norm{\phi}_\pi}{\sqrt{(1-\rho) \, T}}.
\end{equation}
The result separates the contribution of the usual relaxation time from that of the kernel-induced geometry and does not require reversibility. We then applied this framework to KRR for learning the one-step prediction function from a single trajectory. The KRR decomposition identifies the stability error with convergence of the occupation measure in a regularization-dependent MMD geometry. Under a source condition and covariance operator polynomial eigenvalue decay, this yields the rate
\begin{equation}
\cE\big(\wh f_{\lambda_T},f_\star\big)
\lesssim T^{-\frac{2r}{2r+p}},
\qquad \lambda_T\asymp T^{-\frac{1}{2r+p}},
\end{equation}
up to logarithmic factors.

\medskip
Several questions remain open. It would be useful to replace the boundedness assumptions on the feature map and innovations by suitable moment or tail conditions, thereby covering unbounded state spaces and systems with sub-Gaussian or sub-exponential additive noise. Sharper control of the lower-order concentration terms, as well as extensions to misspecified models, vector-valued states, and more general initial distributions, are also of interest. More fundamentally, a systematic understanding of the relation between contraction in regularized kernel-induced geometries and classical spectral or Wasserstein contraction could provide criteria for choosing the kernel and the regularization parameter adaptively from data.

\section*{Acknowledgments}
S. V. has been supported by the MUR Excellence Department Project awarded to Dipartimento di Matematica, Universita di Genova, CUP D33C23001110001 and the PNRR project "Harmonic Analysis and Optimization in Infinite-Dimensional Statistical - Future Artificial Intelligence  Fair – Spoke 10" (CUP J33C24000410007). S. V. is a member of the Gruppo Nazionale per l'Analisi Matematica, la Probabilità e le loro Applicazioni (GNAMPA) of the Istituto Nazionale di Alta Matematica (INdAM). O.\ K.\ and L.\ R.\ acknowledge the financial support of the European Commission (Horizon Europe grant ELIAS 101120237). L.\ R.\ also acknowledges the financial support of the Ministry of Education, University and Research (FARE grant ML4IP R205T7J2KP). 

{\small
\bibliography{references}
\bibliographystyle{apalike}
}

\appendix
\section{Proofs}
This appendix contains the proofs of the occupation measure bounds and the learning guarantees. We first collect the auxiliary results used in the proofs, then establish the occupation measure bound under stationarity, extend it to non-stationary initial distributions, and derive the bound without a spectral gap. We conclude with the proofs of the learning error bounds and the resulting sample rate.

\subsection{Auxiliary results}
\paragraph{Bernstein inequalities for Markov additive functionals.}
We rewrite the Bernstein inequalities of
\citet[Theorems~1 and~3]{jiang2026bernstein} using  the notation used throughout our paper. Their results are stated on a standard Borel state space, a condition automatically satisfied here because \(\cX\) is Polish, and Polish spaces with their Borel \( \sigma \)-algebra are standard Borel.

\medskip \noindent
For \(\eta\in[0,1)\), define
\begin{equation}
\alpha_1(\eta)
:=\frac{1+\eta}{1-\eta},
\qquad \alpha_2(\eta)
:= \begin{cases}
1/3, & \eta=0,\\[2mm]
5/(1-\eta), & \eta\in(0,1).
\end{cases}
\label{eq:jiang-alpha-constants}
\end{equation}

\begin{theorem}[Stationary Bernstein inequality]
\label{thm:jiang-bernstein-stationary}
Let \((X_t)_{t\geq0}\) be a time-homogeneous Markov chain with transition kernel \(P\), invariant measure \(\pi\), and initial distribution \(X_0\sim\pi\). Assume that \(P\) has a positive absolute \(L^2(\pi)\)-spectral gap \( 1 - \rho > 0 \), where \( \rho := \opnorm{P}_{L_0^2(\pi)} \). Let \(T\geq1\), and let \(f_0,\ldots,f_{T-1}:\cX\to\bbR\) be measurable functions satisfying,
for some \(c\in(0,\infty)\),
\[
\pi(f_t)=0,
\qquad \text{and} \qquad
\abs{f_t(x)}\leq c,
\qquad x\in\cX,\; 0\leq t\leq T-1.
\]
Set
\[
S_T:=\sum_{t=0}^{T-1}f_t(X_t),
\qquad \sigma_T^2
:=\frac{1}{T}\sum_{t=0}^{T-1}\pi(f_t^2).
\]
Then, for every \(\varepsilon>0\),
\begin{equation}
\bbP_\pi\left(S_T\geq\varepsilon\right)
\leq \exp\left\{ -\frac{\varepsilon^2} {2\left( T\alpha_1(\rho)\sigma_T^2 + \alpha_2(\rho)c\varepsilon \right)} \right\}.
\label{eq:jiang-bernstein-stationary-tail}
\end{equation}
\end{theorem}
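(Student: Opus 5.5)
Since the statement is a transcription of \citet[Theorem~1]{jiang2026bernstein} into our notation, the proof has two parts. The first is to check that the hypotheses match. The state space is standard Borel because \(\cX\) is Polish. Their spectral-gap parameter is \(\opnorm{P}_{L_0^2(\pi)}=\rho\), with no reversibility required. Their \(f_t\) are the same bounded, \(\pi\)-centered, time-dependent functions, and their variance proxy is the same average \(\sigma_T^2\). With this dictionary, \eqref{eq:jiang-bernstein-stationary-tail} is their tail bound with constants \(\alpha_1(\rho),\alpha_2(\rho)\) as in \eqref{eq:jiang-alpha-constants}. The second part, for completeness, is to sketch how the argument goes, as I would reconstruct it.

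Fix \(s>0\) and use the Chernoff bound \(\bbP_\pi(S_T\geq\varepsilon)\leq e^{-s\varepsilon}\bbE_\pi e^{sS_T}\). Write \(E_t\) for multiplication by \(e^{sf_t}\) on \(L^2(\pi)\). The Markov property and stationarity give
\begin{equation}
\bbE_\pi e^{sS_T}=\bigl\langle \one, E_0PE_1P\cdots PE_{T-1}\one\bigr\rangle_\pi .
\end{equation}
Symmetrizing with \(E_t^{1/2}\), this is at most
\begin{equation}
\norm{E_0^{1/2}\one}_\pi\,\norm{E_{T-1}^{1/2}\one}_\pi\prod_{t=0}^{T-2}\opnorm{E_t^{1/2}PE_{t+1}^{1/2}}_{L^2(\pi)} .
\end{equation}
A simpler grouping is \(\prod_t\opnorm{E_t^{1/2}PE_t^{1/2}}\), after Cauchy--Schwarz / Hölder in \(t\). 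Each factor is then a perturbation problem for a single bounded centered function. Split \(P=\Pi+P_0\) with \(\Pi g=\pi(g)\one\) and \(\opnorm{P_0}_{L_0^2(\pi)}\leq\rho\). Expand \(e^{sf/2}=1+\tfrac{s}{2}f+R\), using \(\abs{f}\leq c\) to control \(R\) through the exponential series. Then estimate the top of the spectrum of the perturbed operator by a Kato-type resolvent / power-series argument, exactly as in Lezaud's and Paulin's treatments but with operator norms in place of eigenvalues.

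The target one-step estimate is
\begin{equation}
\log\opnorm{E^{1/2}PE^{1/2}}\leq \frac{s^2\,\alpha_1(\rho)\,\pi(f^2)/2}{1-\alpha_2(\rho)\,c\,s},
\qquad 0<s<\frac{1}{\alpha_2(\rho)c}.
\end{equation}
The centering \(\pi(f)=0\) kills the first-order term. The factor \((1+\rho)/(1-\rho)\) comes from summing \(\sum_k\rho^k\) twice, once for each side of \(\Pi\). Summing over \(t\) gives \(\log\bbE_\pi e^{sS_T}\leq s^2T\alpha_1\sigma_T^2/\bigl(2(1-\alpha_2cs)\bigr)\). Choosing \(s=\varepsilon/(T\alpha_1\sigma_T^2+\alpha_2c\varepsilon)\) yields \eqref{eq:jiang-bernstein-stationary-tail}.

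The main obstacle is the one-step perturbation bound for non-reversible \(P\) with the sharp constants \(\alpha_1,\alpha_2\). With time-varying \(f_t\), the adjacent factors do not share a common multiplier, and without self-adjointness one cannot pass to eigenvalues. I would follow \citet{fan2021hoeffding}, bounding \(\opnorm{\cdot}\) via the series in \((I-\rho\,\cdot)^{-1}\). At the level of constants I would defer entirely to \citet[Theorem~1]{jiang2026bernstein}.
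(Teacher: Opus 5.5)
Your first part is what the paper does. The paper gives no argument of its own: it transcribes \citet[Theorem~1]{jiang2026bernstein}, notes that Polish spaces are standard Borel, and remarks that no reversibility, irreducibility or Harris recurrence is needed. As a proof by citation, your proposal therefore matches the paper.

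Your reconstruction, however, contains a step that fails, and the rest of your sketch depends on it. You pass from the mixed factors $\opnorm{E_t^{1/2}PE_{t+1}^{1/2}}$ to the diagonal factors $\opnorm{E_t^{1/2}PE_t^{1/2}}$ ``after Cauchy--Schwarz / H\"older in $t$''. That Cauchy--Schwarz step requires $(g,h)\mapsto\la g,Ph\ra_\pi$ to be a positive semidefinite form, i.e.\ $P$ self-adjoint and positive. An absolute spectral gap guarantees neither, and the regrouped inequality is in fact false even for reversible $P$.

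Here is a counterexample. Take $\cX=\{0,1\}$ with $\pi$ uniform, and $P=(1-\lambda)\Pi+\lambda Q$ with $Q$ the swap, so that $\rho=\lambda<1$. Let $T=2$, $f_0=a(\one_{\{0\}}-\one_{\{1\}})$ and $f_1=-f_0$. As $\lambda\uparrow1$:
\begin{itemize}
\item $\bbE_\pi e^{sS_2}\to\cosh(2sa)$;
\item $\opnorm{E_t^{1/2}PE_t^{1/2}}\to1$ for $t=0,1$;
\item $\norm{E_t^{1/2}\one}_\pi\to\cosh(sa)^{1/2}$.
\end{itemize}
So for $\lambda$ close to $1$, the regrouped bound (with or without the boundary factors) is strictly smaller than the moment generating function it is supposed to dominate. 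Meanwhile $\opnorm{E_0^{1/2}PE_1^{1/2}}\to e^{sa}$, so your first, valid bound is not contradicted.

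Consequently, summing a single-function estimate over $t$ does not bound $\log\bbE_\pi e^{sS_T}$ when the $f_t$ vary with $t$. The time dependence must be handled in the mixed factors themselves, through a two-function perturbation bound for $\opnorm{E_t^{1/2}PE_{t+1}^{1/2}}$, or by some other device. This is the obstacle you name in your last paragraph, but it contradicts the regrouping you asserted earlier. Your sketch is coherent only for $f_t\equiv f$, where mixed and diagonal factors coincide. Since the paper only imports this statement, it is cleanest to drop the sketch or label it as a heuristic for that time-homogeneous case.
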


\begin{theorem}[Nonstationary Bernstein inequality]
\label{thm:jiang-bernstein-nonstationary}
Let \((X_t)_{t\geq0}\) be a time-homogeneous Markov chain with transition kernel \(P\), invariant measure \(\pi\), and initial distribution \(X_0\sim\nu\). Assume again that \( \rho := \opnorm{P}_{L_0^2(\pi)} <1 \). Let \(T\geq1\), and let \(f_0,\ldots,f_{T-1}:\cX\to\bbR\) be measurable functions satisfying
\[
\pi(f_t)=0,
\qquad \text{and} \qquad
\abs{f_t(x)}\leq c,
\qquad x\in\cX,\; 0\leq t\leq T-1,
\]
for some \(c\in(0,\infty)\). Define \(S_T\) and \(\sigma_T^2\) as in Theorem~\ref{thm:jiang-bernstein-stationary}. Suppose, in addition, that \(\nu\ll\pi\) and
\begin{equation}
D_{\nu,p}
:= \Bnorm{\frac{d\nu}{d\pi}}_{L^p(\pi)}
<\infty
\label{eq:jiang-bernstein-dnup}
\end{equation}
for some \(p\in(1,\infty]\), and let \( q:=p/(p-1)\in[1,\infty) \), with the convention \(q=1\) when \(p=\infty\). Then,
for every \(\varepsilon>0\),
\begin{equation}
\bbP_\nu\left(S_T\geq\varepsilon\right)
\leq D_{\nu,p} \exp\left\{ -\frac{\varepsilon^2} {2q\left( T\alpha_1(\rho)\sigma_T^2 + \alpha_2(\rho)c\varepsilon \right)} \right\}.
\label{eq:jiang-bernstein-nonstationary-tail}
\end{equation}
\end{theorem}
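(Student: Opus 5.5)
The plan is to deduce Theorem~\ref{thm:jiang-bernstein-nonstationary} from the stationary inequality, Theorem~\ref{thm:jiang-bernstein-stationary}, by a change of measure on path space followed by H\"older's inequality. The assumptions on \(f_0,\dots,f_{T-1}\), \(c\), \(\rho\) and \(\sigma_T^2\) are identical in the two statements. So the only new ingredient is to quantify how much the law of the trajectory started from \(\nu\) can deviate from the stationary law.

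First, I will identify the path-space density. Write \(g:=d\nu/d\pi\) and let \(\bbP_\nu\), \(\bbP_\pi\) be the laws of \((X_0,\dots,X_{T-1})\) started from \(\nu\) and from \(\pi\). Both chains use the same transition kernel \(P\), so by the Markov property their finite-dimensional laws are
\[
\nu(dx_0)P(x_0,dx_1)\cdots P(x_{T-2},dx_{T-1})
\qandq
\pi(dx_0)P(x_0,dx_1)\cdots P(x_{T-2},dx_{T-1}).
\]
Hence \(d\bbP_\nu/d\bbP_\pi=g(X_0)\). Concretely, for every bounded measurable functional \(F\) of the path,
\[
\bbE_\nu[F]=\bbE_\pi\bigl[g(X_0)F\bigr].
\]
This follows from Fubini on cylinder functions and a monotone class argument. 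This is the only step needing some care with measurability, and it is where I expect the (mild) technical work to lie.

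Next, I will apply this with \(F=\one_A\), where \(A:=\{S_T\geq\varepsilon\}\), and use H\"older with exponents \((p,q)\) under \(\bbP_\pi\):
\[
\bbP_\nu(A)=\bbE_\pi\bigl[g(X_0)\one_A\bigr]
\leq \bigl(\bbE_\pi g(X_0)^p\bigr)^{1/p}\,\bbP_\pi(A)^{1/q}
= D_{\nu,p}\,\bbP_\pi(A)^{1/q}.
\]
The last equality uses \(X_0\sim\pi\) under \(\bbP_\pi\), so that \(\bbE_\pi g(X_0)^p=\norm{g}_{L^p(\pi)}^p\). For \(p=\infty\) one simply bounds \(g(X_0)\leq\norm{g}_{L^\infty(\pi)}\) \(\bbP_\pi\)-a.s., which gives the same inequality with \(q=1\).

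Finally, I will insert the stationary tail bound~\eqref{eq:jiang-bernstein-stationary-tail} for \(\bbP_\pi(A)\) and raise it to the power \(1/q\). Raising the exponential to the power \(1/q\) divides its exponent by \(q\), which produces exactly the factor \(2q\) in the denominator of~\eqref{eq:jiang-bernstein-nonstationary-tail}, with the prefactor \(D_{\nu,p}\). No spectral argument beyond the one already contained in Theorem~\ref{thm:jiang-bernstein-stationary} is needed.
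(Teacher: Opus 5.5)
Your proof is correct. The paper does not prove this result itself: it quotes it from Jiang et al.\ (Theorems~1 and~3). Your argument, namely the change of measure $d\bbP_\nu/d\bbP_\pi=\frac{d\nu}{d\pi}(X_0)$ on path space, then H\"older with exponents $(p,q)$, then Theorem~\ref{thm:jiang-bernstein-stationary} raised to the power $1/q$, is the standard reduction behind such nonstationary bounds, and it gives exactly the prefactor $D_{\nu,p}$ and the factor $q$ in the exponent.
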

Neither theorem requires \(P\) to be reversible. In particular, no self-adjointness assumption is imposed: dependence is controlled by the absolute operator norm \(\rho=\opnorm{P}_{L_0^2(\pi)}\). No separate irreducibility or Harris-recurrence assumption is required. When applying Theorem~\ref{thm:jiang-bernstein-nonstationary} to a trajectory block starting at time \(t_0\), one simply replaces \(\nu\) by \(\nu_{t_0}:=\nu P^{t_0}\).

\paragraph{Martingales.} The following is the localized version of Pinelis--Bernstein inequality. For the original Pinelis--Bernstein inequality with an almost surely bounded predictable quadratic variation we refer the reader to ~\citet[Theorem~3.4]{pinelis1994optimum} and~\citet[Proposition~A.3]{TarresYao}.

\begin{theorem}[Localized Pinelis--Bernstein inequality]
\label{thm:pinelis-bernstein}
\label{thm:pinelis-bernstein-hilbert}
Fix \(T\geq1\), and suppose that there exists \(R<\infty\) such that
\[
\norm{d_t}_\cH\leq R \;\text{a.s.},
\qquad t=1,\ldots,T.
\]
Then, for every \(V\geq0\) and every \(\delta\in(0,1)\),
\begin{equation}
\label{eq:localized-pinelis-bernstein}
\bbP\left\{ \norm{M_T}_\cH
> \sqrt{2V\log(2/\delta)} + \frac{2R\log(2/\delta)}{3},
\; \la M\ra_T\leq V \right\}
\leq\delta.
\end{equation}
\end{theorem}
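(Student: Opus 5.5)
The plan is the classical exponential-supermartingale argument of \citet{pinelis1994optimum}. The only change is that the almost-sure bound on the predictable quadratic variation is replaced by the event \(\{\la M\ra_T\leq V\}\), inserted just before Markov's inequality. Write \(\psi(u):=e^u-1-u\) and fix \(\lambda>0\).

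\emph{Step 1: Pinelis supermartingale.} I would invoke Pinelis' result for \((2,1)\)-smooth Banach spaces, a class that contains every Hilbert space. It says that
\[
G_t:=\cosh\bigl(\lambda\norm{M_t}_\cH\bigr)\exp\Bigl(-\sum_{s=1}^t\bbE_{s-1}\bigl[e^{\lambda\norm{d_s}_\cH}-1-\lambda\norm{d_s}_\cH\bigr]\Bigr),\qquad G_0=1,
\]
is a positive supermartingale. Consequently \(\bbE G_T\leq1\).

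The core of this step is the one-step inequality
\[
\bbE_{t-1}\cosh\bigl(\lambda\norm{M_{t-1}+d_t}\bigr)\leq\cosh\bigl(\lambda\norm{M_{t-1}}\bigr)\,\bbE_{t-1}\bigl[1+\psi(\lambda\norm{d_t})\bigr]\leq\cosh\bigl(\lambda\norm{M_{t-1}}\bigr)\,e^{\bbE_{t-1}\psi(\lambda\norm{d_t})}.
\]
To prove it, I would study \(u(s):=\cosh(\lambda\norm{x+sd})\) on \(s\in[0,1]\). Three facts do the work: the first-order term vanishes in conditional expectation because \(\bbE_{t-1}d_t=0\); the Hilbert parallelogram structure gives \(u''(s)\leq\lambda^2\norm{d}^2\cosh(\lambda\norm{x+sd})\); and a Gronwall-type comparison with \(\cosh(\lambda\norm{x})\cosh(\lambda s\norm{d})\) closes the estimate. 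This step is the main obstacle. Since the paper cites it, I would quote \citet[Theorem~3.2]{pinelis1994optimum} rather than reprove it, and only check that no a.s. bound on \(\la M\ra_T\) is used there.

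\emph{Step 2: localization.} Since \(\norm{d_s}\leq R\) a.s. and \(\psi(u)/u^2\) is nondecreasing on \([0,\infty)\),
\[
\psi(\lambda\norm{d_s})\leq\frac{\psi(\lambda R)}{R^2}\norm{d_s}^2,
\qquad\text{hence}\qquad
\sum_{s\leq T}\bbE_{s-1}\psi(\lambda\norm{d_s})\leq\frac{\psi(\lambda R)}{R^2}\la M\ra_T.
\]
Let \(A:=\{\norm{M_T}>r,\ \la M\ra_T\leq V\}\). On \(A\) we have
\[
G_T\geq\tfrac12 e^{\lambda r}\exp\bigl(-\psi(\lambda R)V/R^2\bigr).
\]
Markov's inequality then gives
\[
\bbP(A)\leq 2\exp\bigl(-\lambda r+\psi(\lambda R)V/R^2\bigr).
\]

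\emph{Step 3: optimization and inversion.} Choosing \(\lambda=R^{-1}\log(1+Rr/V)\) and using the standard bound \((1+u)\log(1+u)-u\geq u^2/(2(1+u/3))\) yields
\[
\bbP(A)\leq2\exp\Bigl(-\frac{r^2}{2(V+Rr/3)}\Bigr).
\]
The degenerate case \(V=0\) is handled by letting \(V\downarrow0\).

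Finally, set \(L:=\log(2/\delta)\) and \(r:=\sqrt{2VL}+2RL/3\). The right-hand side is at most \(\delta\) as soon as \(r^2\geq2L(V+Rr/3)\). This holds because the larger root of \(r^2-\tfrac{2RL}{3}r-2VL=0\) is \(\tfrac{RL}{3}+\sqrt{R^2L^2/9+2VL}\), and by subadditivity of the square root this is at most \(\sqrt{2VL}+\tfrac{2RL}{3}\). This gives \eqref{eq:localized-pinelis-bernstein}.
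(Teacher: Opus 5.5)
Your proof is correct, but it takes a different route from the paper. The paper never reopens Pinelis's argument. It introduces the predictable truncation $\wt d_t := d_t\one_{\{Q_t\le V\}}$, where $Q_t := \sum_{s\le t}\bbE_{s-1}\norm{d_s}_\cH^2$. Then $\wt M$ is again a martingale with increments bounded by $R$ and $\la \wt M\ra_T\le V$ almost surely. The standard Pinelis--Bernstein inequality applies to $\wt M$ as a black box, and $\wt M_T=M_T$ on $\{\la M\ra_T\le V\}$.

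You instead localize inside the proof, in the style of Freedman's inequality. You keep the Pinelis exponential supermartingale and bring in the event only at the Markov step, via $\psi(\lambda\norm{d_s})\le\psi(\lambda R)\norm{d_s}^2/R^2$. You then redo the Bennett-to-Bernstein optimization and the quadratic inversion yourself. These computations are correct, including the treatment of $V=0$; the case $R=0$ is trivial.

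The paper's reduction is shorter and modular. It localizes any inequality proved under an almost-sure bound on $\la M\ra_T$ without touching constants. Yours shows exactly where localization enters. It also gives the intermediate Bennett bound $2\exp\{-(V/R^2)\,h(Rr/V)\}$ on the event, with $h(u):=(1+u)\log(1+u)-u$, at the cost of redoing the optimization.

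One caution about your sketch of the one-step inequality, which you rightly plan to cite rather than reprove. The comparison with $\cosh(\lambda\norm{x})\cosh(\lambda s\norm{d})$ does not close.
\begin{itemize}
\item The pointwise bound $\cosh(\lambda\norm{x+sd})\le\cosh(\lambda\norm{x})\cosh(\lambda s\norm{d})$ fails when $d$ is a positive multiple of $x$.
\item An ODE comparison that keeps $u'(0)$ multiplies the first-order term by $\sinh(\lambda\norm{d})/(\lambda\norm{d})$. That term then no longer vanishes under $\bbE_{t-1}$.
\end{itemize}
What works is the bound $\cosh(\lambda\norm{x+sd})\le\cosh(\lambda\norm{x})e^{\lambda s\norm{d}}$. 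It follows from the triangle inequality and $\cosh(a+b)\le\cosh(a)e^{b}$ for $b\ge0$. Insert it into the Taylor remainder $\int_0^1(1-s)u''(s)\,ds$, leaving the exact linear term $u'(0)$ to be killed by $\bbE_{t-1}d_t=0$. This is precisely what produces $\psi(\lambda\norm{d})$.
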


\begin{proof}
For \(t=1,\ldots,T\), set
\begin{equation}
a_t:=\bbE_{t-1}\norm{d_t}_\cH^2,
\qquad Q_t:=\sum_{s=1}^t a_s,
\qquad Q_0:=0.
\end{equation}
Define the predictably truncated increments \( \wt d_t:=d_t\one_{\{Q_t\leq V\}} \). Since \(Q_t\) is \(\cF_{t-1}\)-measurable,
\begin{equation}
\bbE_{t-1}\wt d_t
= \one_{\{Q_t\leq V\}}\bbE_{t-1}d_t
=0.
\end{equation}
Thus, \( \wt M_s:=\sum_{t=1}^s\wt d_t \) for  \( s=0,\ldots,T \) is a martingale. Moreover, \(\norm{\wt d_t}_\cH\leq R\), and, since \((Q_t)_t\) is nondecreasing,
\begin{equation}
\la\wt M\ra_T
= \sum_{t=1}^T \one_{\{Q_t\leq V\}}a_t
\leq V
\; \text{a.s.}.
\end{equation}
The standard Pinelis--Bernstein inequality therefore gives
\begin{equation}
\bbP\left\{ \norm{\wt M_T}_\cH > \sqrt{2V\log(2/\delta)} +\frac{2R\log(2/\delta)}{3} \right\}
\leq\delta.
\end{equation}
On the event
\begin{equation}
\{\la M\ra_T\leq V\}
= \{Q_T\leq V\},
\end{equation}
we have \(Q_t\leq V\) for every \(t=1,\ldots,T\), and hence \(\wt M_T=M_T\). This proves the claim.
\end{proof}

\subsection{Occupation measure convergence under stationarity}
\label{app:occupation-stationary}
\paragraph{Proof of Theorem~\ref{thm:mmd-occupation-measure-whp}.} We start by noting that by definition,
\begin{equation}
\MMD_\phi(\wh \mu_T, \pi)
= \Bnorm{ \frac{1}{T} \sum_{t=0}^{T-1} \phi(X_t) - \pi \phi }_\cH;
\label{eq:mmd-partial-sum}
\end{equation}
thus the problem reduces to a concentration estimate for a Hilbert-valued additive functional of the Markov chain. We first decompose the partial sum in~\eqref{eq:mmd-partial-sum} into a certain martingale term and a boundary term. The martingale term is then controlled by a suitable Hilbert-valued Bernstein inequality, while the bounded boundary term vanishes at order \(T^{-1}\).

\paragraph{Poisson equation.} For a suitable space of observables and a bounded operator \(P\) acting on it, the \emph{Poisson equation} associated with \(P\) and a function \(f\) is given by
\begin{equation}
\label{eq:poisson-def}
u - P u = f.
\end{equation}
This equation has been used extensively in studies of Markov chains; see, e.g., \citet{glynn1996liapounov, douc2018markov}.

\medskip
Let \(E\) be a Banach space and let \(\cB\) be a Banach space of \(E\)-valued measurable functions such that \(P:\cB \to \cB\) is bounded. We use the fixed-point decomposition introduced in~\eqref{eq:fixed-point-decomposition}, with
\(\cB_0=\ker(\Pi)\). The following standard consequence of the Neumann-series argument is recorded for later use; see also~\citet[Section~21]{douc2018markov} and~\citet[Proposition~C.4]{kachaiev2026learning}.

\begin{lemma}
\label{prop:poisson-admits-solution}
Assume that there exist constants \( C>0 \) and \( \rho \in (0,1) \)
such that
\[
\norm{P^k f}_{\cB}
\leq C\rho^k\norm{f}_{\cB},
\qquad \forall f\in\cB_0,\quad k\geq0.
\]
Then \( (I-P): \cB_0 \to \cB_0 \) is invertible. In particular, for every
\(g \in \cB_0\), the Poisson equation
\[
u-Pu=g
\]
admits a unique solution \(u \in \cB_0\), given by
\[
u := \sum_{k=0}^{\infty}P^kg,
\qquad \norm{u}_{\cB}
\leq \frac{C}{1-\rho}\norm{g}_{\cB}.
\]
\end{lemma}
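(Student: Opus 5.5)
The argument is the standard Neumann-series one, and I would carry it out directly on the centered space \(\cB_0\). I take \(\cB\) to be the Banach space with its norm, \(P\) bounded on it, and \(\cB_0=\ker(\Pi)\) a closed subspace that is \(P\)-invariant, as shown right after~\eqref{eq:fixed-point-decomposition}. Being closed, \(\cB_0\) is complete.

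\emph{Step 1: the series converges.} Fix \(g\in\cB_0\) and form the partial sums \(u_N:=\sum_{k=0}^{N}P^kg\). Every term lies in \(\cB_0\) by invariance. By the assumed decay, \(\sum_k\norm{P^kg}_\cB\leq C\norm{g}_\cB\sum_k\rho^k=\frac{C}{1-\rho}\norm{g}_\cB<\infty\). So the series converges absolutely, the partial sums are Cauchy, and \(u:=\lim_N u_N\) exists in \(\cB_0\). The same estimate gives \(\norm{u}_\cB\leq\frac{C}{1-\rho}\norm{g}_\cB\).

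\emph{Step 2: \(u\) solves the Poisson equation.} Since \(P\) is bounded, it commutes with the limit. We then have the telescoping identity \((I-P)u_N=g-P^{N+1}g\). The remainder satisfies \(\norm{P^{N+1}g}_\cB\leq C\rho^{N+1}\norm{g}_\cB\to0\). Letting \(N\to\infty\) gives \(u-Pu=g\). This shows that \(I-P\) is surjective on \(\cB_0\), with a bounded right inverse \(R g:=\sum_k P^k g\).

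\emph{Step 3: uniqueness and injectivity.} Suppose \(v\in\cB_0\) and \(v=Pv\). Iterating gives \(v=P^kv\) for every \(k\), so \(\norm{v}_\cB\leq C\rho^k\norm{v}_\cB\to0\), and hence \(v=0\). Therefore \(I-P\) is injective on \(\cB_0\). Combined with Step 2, it is a bijection with inverse \(R\), and \(R\) is bounded by the estimate above. This gives both invertibility and uniqueness of the solution in \(\cB_0\).

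No step is a real obstacle. The only point needing care is that the lemma only assumes \(C>0\), not \(C\le 1\), so \(P\) need not be a contraction on \(\cB_0\). I would therefore avoid arguing through \(\opnorm{P}<1\) and use the bound on the powers \(P^k\) directly, as above. Closedness of \(\cB_0\), which is needed for completeness, follows from the boundedness of \(\Pi\).
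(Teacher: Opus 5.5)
Your proof is correct and follows the same route as the paper. The paper also uses absolute summability of the Neumann series on the closed, \(P\)-invariant subspace \(\cB_0\), phrased as \(\sum_k \opnorm{(P|_{\cB_0})^k} \le C/(1-\rho)\) followed by the Neumann-series theorem, and gets uniqueness by iterating \(v = Pv\) to obtain \(\norm{v}_\cB \le C\rho^k \norm{v}_\cB\); you simply write out the telescoping and completeness details that the paper leaves implicit.
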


\begin{proof}
The assumed geometric bound implies
\begin{equation}
\sum_{k=0}^{\infty} \bnorm{(P|_{\cB_0})^k}
\leq \frac{C}{1-\rho},
\end{equation}
so the conclusion follows from the Neumann-series theorem. Uniqueness in \(\cB_0\) follows since \(v=Pv\) implies \(\norm{v}_{\cB}\leq C\rho^k\norm{v}_{\cB}\) for every \(k\geq0\).
\end{proof}

\medskip
By boundedness, \( \phi \in L^2(\pi; \cH) \) so that \( \phi - \pi \phi \in L_0^2(\pi; \cH) \). By Proposition~\ref{prop:gap-transfer-on-bochner}, \( P \) and \( \phi - \pi \phi \) satisfy the condition of Lemma~\ref{prop:poisson-admits-solution}. Therefore, the Poisson equation
\begin{equation}
u - Pu = \phi - \pi \phi
\end{equation}
admits a solution \( u \in L_0^2(\pi; \cH) \) given by
\begin{equation}
u(x)
:= \sum_{n=0}^\infty P^n (\phi(x) - \pi \phi)
= \sum_{n=0}^\infty (P^n \phi(x) - \pi \phi),
\quad x \in \cX.
\end{equation}
Fix \( x \in \cX \). For every \( n \in \bbN_0 \),
\begin{equation}
(P^n \phi)(x) = \int \phi(x') P^n(x, dx') = m_\phi(P^n(x, \cdot)) = m_\phi(\delta_x P^n).
\end{equation}
Therefore,
\begin{equation}
\norm{(P^n \phi)(x) - \pi \phi}_\cH
= \MMD_\phi( \delta_x P^n, \pi )
= \MMD_\phi( P^n(x, \cdot), \pi )
\leq C \theta^n, 
\end{equation}
Now, by triangle inequality,
\begin{equation}
\norm{ u(x) }_\cH
\leq \sum_{n=0}^\infty \norm{(P^n \phi)(x) - \pi \phi}_\cH
\leq C \cdot \sum_{n=0}^\infty \theta^n
\leq \frac{C}{1 - \theta}.
\end{equation}
As the bound holds for any \( x \in \cX \) uniformly,
\begin{equation}
\label{eq:u-linf}
\norm{u}_\infty
:= \sup_{x \in \cX} \norm{u(x)}_\cH
\leq \frac{C}{1 - \theta}.
\end{equation}

\paragraph{Martingale decomposition.}
Let \( \cF_t:=\sigma(X_0,\ldots,X_t) \) for \( t\geq0 \). Using~\citet[Lemma~C.5]{kachaiev2026learning}, the partial sum in~\eqref{eq:mmd-partial-sum} admits the representation
\begin{equation}
\sum_{s=0}^{T-1}\bigl(\phi(X_s)-\pi\phi\bigr)
= \sum_{t=1}^T d_t+R_T,
\end{equation}
where
\begin{equation}
d_t:=u(X_t)-Pu(X_{t-1}),
\qquad R_T:=u(X_0)-u(X_T).
\end{equation}
By the Markov property,
\begin{equation}
\bbE\left[d_t\mid\cF_{t-1}\right]
= \bbE\left[u(X_t)\mid X_{t-1}\right] -Pu(X_{t-1})
=0.
\end{equation}
Consequently, \( (d_t)_{t\geq1} \) is a martingale difference sequence and \( M_T:=\sum_{t=1}^T d_t \) is a martingale. Moreover, for \(t=1,\ldots,T\),
\begin{equation}
\norm{d_t}_\cH
\leq \norm{u(X_t)}_\cH+\norm{Pu(X_{t-1})}_\cH
\leq 2 \norm{u}_\infty.
\label{eq:dt-infty-norm}
\end{equation}
Its predictable quadratic variation is
\begin{align}
V_T
:=\la M\ra_T
&=\sum_{t=1}^T\bbE_{t-1}\norm{d_t}_\cH^2
=\sum_{s=0}^{T-1}
\bbE\left[ \norm{u(X_{s+1})-Pu(X_s)}_\cH^2 \mid X_s \right].
\end{align}
Define the measurable observable \(V_u: \cX \to \bbR\) by
\begin{equation}
V_u(x)
:= \int_\cX \norm{u(x')-Pu(x)}_\cH^2\,P(x,dx'),
\qquad \text{so that} \quad
V_T=\sum_{s=0}^{T-1}V_u(X_s).
\end{equation}
By Markovianity of \( (X_t)_t \), \( V_u \) does not depend on \( t \). By Eq.~\eqref{eq:dt-infty-norm},
\begin{equation}
\norm{V_u}_\infty
\leq 4 \norm{u}_\infty^2
\leq \frac{4 C^2}{(1-\theta)^2}.
\label{eq:vu-infty-norm}
\end{equation}
We now use the Bernstein inequality in Theorem~\ref{thm:jiang-bernstein-stationary}. Since \(V_u\geq0\),
\begin{equation}
\Var_\pi(V_u)
\leq \pi(V_u^2)
\leq \norm{V_u}_\infty \pi(V_u).
\label{eq:vu-variance}
\end{equation}
By \( \pi \)-invariance,
\begin{equation}
\pi(V_u)
:= \int \left( P \norm{u}_\cH^2(x) - \norm{P u(x)}_\cH^2  \right)  \pi(dx)
= \norm{u}_\pi^2 - \norm{P u}_\pi^2.
\end{equation}
Hence
\begin{equation}
\pi(V_u)
=\la u-Pu,u+Pu\ra_\pi
=\la \phi-\pi\phi,u+Pu\ra_\pi
\leq \norm{\phi-\pi\phi}_\pi \big(\norm{u}_\pi+\norm{Pu}_\pi\big).
\end{equation}
Since, by Proposition~\ref{prop:gap-transfer-on-bochner}, \( \opnorm{P}_{L_0^2(\pi;\cH)} = \rho \),
\begin{equation}
\norm{u}_\pi
= \norm{(I-P)^{-1} (\phi-\pi\phi)}_\pi
\leq \frac{1}{1-\rho}\norm{\phi-\pi\phi}_\pi,
\qquad \norm{Pu}_\pi\leq\rho\norm{u}_\pi,
\end{equation}
we obtain
\begin{equation}
\pi(V_u)
\leq \frac{1+\rho}{1-\rho} \norm{\phi-\pi\phi}_\pi^2
\leq \frac{1+\rho}{1-\rho}\norm{\phi}_\pi^2.
\label{eq:pi-vu}
\end{equation}
Combining~\eqref{eq:vu-infty-norm},~\eqref{eq:vu-variance}, and~\eqref{eq:pi-vu} gives
\begin{equation}
\Var_\pi(V_u)
\leq
\frac{4 C^2}{(1-\theta)^2}
\frac{1+\rho}{1-\rho}
\norm{\phi}_\pi^2.
\label{eq:vu-variance-bound}
\end{equation}
Applying Theorem~\ref{thm:jiang-bernstein-stationary} to the centered observable \(\wt V_u:=V_u-\pi(V_u)\), which is bounded and measurable. Set
\begin{equation}
\sum_{t=0}^{T-1}\wt V_u(X_t) = V_T-T\pi(V_u),
\qquad \Var_\pi(\wt V_u) = \Var_\pi(V_u),
\qquad \norm{\wt V_u}_\infty \leq \norm{V_u}_\infty.
\end{equation}
We obtain, for every \(\varepsilon>0\),
\begin{equation}
\label{eq:fan-stationary}
\bbP\big\{ V_T-T\pi(V_u)>\varepsilon \big\}
\leq \exp\left\{ -\frac{\varepsilon^2}{2\left( T\alpha_1(\rho)\Var_\pi(V_u) + \alpha_2(\rho)\norm{V_u}_\infty \, \varepsilon \right)} \right \},
\end{equation}
where \( \alpha_1, \alpha_2 \) are given in~\eqref{eq:jiang-alpha-constants}. Consequently, letting \(L_\delta:=\log(4/\delta)\), with probability at least \(1-\delta/2\),
\begin{align}
V_T
&\leq T\pi(V_u)
+ \sqrt{ 2T\alpha_1(\rho)\Var_\pi(V_u)L_\delta }
+ 2\alpha_2(\rho)\norm{V_u}_\infty L_\delta \\
&\leq \frac{1+\rho}{1-\rho} \, T \norm{\phi}_\pi^2
+ 2\sqrt{2}\, \frac{1+\rho}{1-\rho} \, \tau_\theta\norm{\phi}_\pi \sqrt{T L_\delta}
+ 40\tau_\rel\tau_\theta^2 L_\delta \\
&\leq 2\tau_\rel \left( \sqrt{T}\norm{\phi}_\pi + 2\sqrt{5}\,\tau_\theta\sqrt{L_\delta} \right)^2.
\label{eq:vt-upper-bound}
\end{align}
Here, in the last inequality we used \(1+\rho\leq2\).

\medskip
On the event~\eqref{eq:vt-upper-bound}, invoke Theorem~\ref{thm:pinelis-bernstein-hilbert}. By~\eqref{eq:dt-infty-norm}, the martingale increments satisfy
\begin{equation}
\norm{d_t}_\cH
\leq 2\norm{u}_\infty
\leq 2\tau_\theta.
\end{equation}
Hence, with probability at least \(1-\delta\),
\begin{align}
\frac{1}{T} \norm{M_T}_\cH
\leq
\frac{\sqrt{2 V_T L_\delta}}{T}
+ \frac{4\tau_\theta L_\delta}{3 T}
\leq2\norm{\phi}_\pi \sqrt{\frac{\tau_\rel \, L_\delta}{T}}
+ \frac{4\sqrt{5}\,\tau_\theta\sqrt{\tau_\rel}  \, L_\delta}{T}
+ \frac{4\tau_\theta L_\delta}{3T}
\label{eq:mt-bound}
\end{align}
On the other hand, by the triangle inequality,
\begin{equation}
\frac{1}{T}\norm{R_T}_\cH
\leq \frac{2\norm{u}_\infty}{T}
\leq \frac{2\tau_\theta}{T}.
\label{eq:rt-bound}
\end{equation}
Combining~\eqref{eq:mt-bound} and~\eqref{eq:rt-bound} therefore yields, with probability at least \(1-\delta\),
\begin{align}
\MMD_\phi(\wh\mu_T,\pi)
&\leq 2\norm{\phi}_\pi \sqrt{\frac{\tau_\rel \, L_\delta}{T}}
+ \frac{4\sqrt{5}\,\tau_\theta\sqrt{\tau_\rel} \, L_\delta}{T}
+ \frac{4\tau_\theta L_\delta}{3T}
+ \frac{2\tau_\theta}{T} \\
&\leq 2 \norm{\phi}_\pi \sqrt{\frac{\tau_\rel  \log(4/\delta)}{T}}
+ \frac{12 \tau_\theta \sqrt{\tau_\rel} \log(4/\delta)}{T},
\end{align}
where we used \( L_\delta = \log(4/\delta) > \log(4) > 1 \) and \( \sqrt{\tau_\rel} \ge 1 \).
\qed
\subsection{Extension to non-stationary initial distributions}
\label{app:occupation-nonstationary}
\begin{proof}[Proof of Theorem~\ref{thm:mmd-occupation-measure-whp-nonstationary}]
Up to the concentration estimate for the predictable quadratic variation, the proof proceeds exactly as in Theorem~\ref{thm:mmd-occupation-measure-whp}. Indeed, the solution to the Poisson equation, the martingale decomposition, the bounds on the martingale increments and the boundary term do not require stationarity of the initial distribution. In particular, we retain
\begin{equation}
\norm{V_u}_\infty
\leq 4\tau_\theta^2,
\qquad
\pi(V_u)
\leq \frac{1+\rho}{1-\rho}\norm{\phi}_\pi^2,
\label{eq:vu-nonstationary-basic}
\end{equation}
and, since \(V_u\geq0\),
\begin{equation}
\Var_\pi(V_u)
\leq \pi(V_u^2)
\leq \norm{V_u}_\infty\pi(V_u)
\leq
4\tau_\theta^2
\frac{1+\rho}{1-\rho}
\norm{\phi}_\pi^2.
\label{eq:vu-nonstationary-variance}
\end{equation}
The only modification concerns the concentration of
\begin{equation}
V_{T,t_0}
:= \sum_{t=t_0}^{t_0+T-1}V_u(X_t)
\end{equation}
around \(T\pi(V_u)\). Applying Theorem~\ref{thm:jiang-bernstein-nonstationary} to the centered observable \(V_u-\pi(V_u)\), which is bounded and measurable, gives for every \(\varepsilon>0\),
\begin{equation}
\label{eq:vt-nonstationary-concentration}
\bbP_\nu \left( V_{T,t_0}-T\pi(V_u)>\varepsilon \right)
\leq D \exp\left( -\frac{\varepsilon^2} {2q\left( T \alpha_1(\rho) \Var_\pi(V_u) + \alpha_2(\rho)\norm{V_u}_\infty\varepsilon \right)} \right),
\end{equation}
where \( \alpha_1, \alpha_2 \) are defined in~\eqref{eq:jiang-alpha-constants}. Let \( L_{\nu,\delta} := \log(4D/\delta) \). It follows from~\eqref{eq:vt-nonstationary-concentration} that, with probability at least \(1-\delta/4\),
\begin{align}
V_{T,t_0}
&\leq T\pi(V_u)
+ \sqrt{ 2q \, T \alpha_1(\rho) \Var_\pi(V_u) L_{\nu,\delta} }
+ 2q \, \alpha_2(\rho) \norm{V_u}_\infty L_{\nu,\delta} \\
&\leq \alpha_1(\rho) T\norm{\phi}_\pi^2
+ 2\sqrt{2q}\, \alpha_1(\rho)\tau_\theta\norm{\phi}_\pi \sqrt{T L_{\nu,\delta}}
+ 40q \tau_\rel\tau_\theta^2 L_{\nu,\delta}.
\end{align}
Using \(\alpha_1(\rho)\leq2\tau_\rel\), the latter is bounded by
\begin{equation}
V_{T,t_0}
\leq 2 \tau_\rel \left(
\sqrt{T}\norm{\phi}_\pi + 2\sqrt{5q}\, \tau_\theta\sqrt{L_{\nu,\delta}} \right)^2.
\label{eq:vt-nonstationary-upper-bound}
\end{equation}
We now invoke Theorem~\ref{thm:pinelis-bernstein-hilbert} with \( \norm{d_t}_\cH\leq2\tau_\theta \) and event envelope~\eqref{eq:vt-nonstationary-upper-bound}. Letting \(L_\delta:=\log(4/\delta)\), we obtain with probability at least \(1-\delta\),
\begin{align}
\frac{1}{T}\norm{M_{T,t_0}}_\cH
\leq \frac{\sqrt{2V_{T,t_0}L_\delta}}{T}
+ \frac{4\tau_\theta L_\delta}{3T}
\leq 2\norm{\phi}_\pi \sqrt{\frac{\tau_\rel L_\delta}{T}}
+ 4\sqrt{5q} \, \tau_\theta\sqrt{\tau_\rel}
\frac{\sqrt{L_\delta L_{\nu,\delta}}}{T}
+ \frac{4\tau_\theta L_\delta}{3T}.
\label{eq:mt-nonstationary-bound}
\end{align}
The boundary term is unchanged:
\begin{equation}
\frac{1}{T} \norm{ u(X_{t_0})-u(X_{t_0+T}) }_\cH
\leq \frac{2\tau_\theta}{T}.
\label{eq:rt-nonstationary-bound}
\end{equation}
Combining~\eqref{eq:mt-nonstationary-bound} and~\eqref{eq:rt-nonstationary-bound} gives
\begin{align}
\MMD_\phi(\wh\mu_{T,t_0},\pi)
\leq 2\norm{\phi}_\pi \sqrt{\frac{\tau_\rel \, L_\delta}{T}}
+ \frac{ 4\tau_\theta\ \sqrt{5 \tau_\rel q L_\delta L_{\nu,\delta}} }{T}
+ \frac{4\tau_\theta L_\delta}{3T}
+ \frac{2\tau_\theta}{T}.
\end{align}
Finally, since \(\nu_{t_0} := \nu_0 P^{t_0} \) is a probability measure, monotonicity of \(L^p(\pi)\)-norms on the probability space \((\cX,\pi)\) gives
\begin{equation}
D
= \Bnorm{\frac{d\nu_{t_0}}{d\pi}}_{L^p(\pi)}
\geq \Bnorm{\frac{d\nu_{t_0}}{d\pi}}_{L^1(\pi)}
= \nu_{t_0}(\cX)
=1.
\end{equation}
Since also \(q\geq1\), we have \(qL_\delta L_{\nu,\delta}\geq L_\delta\geq\log 4\). The conclusion now follows from \(4\sqrt{5}+4/3+2/\log 4<12\).
\end{proof}
\subsection{Occupation measure convergence without a spectral gap}
\label{app:occupation-nogap}
\begin{proof}[Proof of Corollary~\ref{cor:no-gap}]
Let \(g:=\phi-\pi\phi\). Assumption~\ref{ass:uniform-mmd-ergodicity} gives \(\norm{P^ng}_\infty\leq C\theta^n\) for every \(n\geq0\). Hence
\begin{equation}
u:=\sum_{n=0}^\infty P^ng
\end{equation}
converges uniformly on \(\cX\), with \(\norm{u}_\infty\leq\tau_\theta\). Term-by-term integration gives \(u-Pu=g\) at every \(x\in\cX\). With \(\cF_t:=\sigma(X_0,\ldots,X_t)\), set
\begin{equation}
d_t:=u(X_t)-Pu(X_{t-1}),
\qquad M_T:=\sum_{t=1}^T d_t.
\end{equation}
By the Markov property, \((d_t)_{t\geq1}\) is a martingale difference sequence, and
\begin{equation}
\sum_{s=0}^{T-1}g(X_s)=M_T+u(X_0)-u(X_T),
\qquad \norm{d_t}_\cH\leq2\tau_\theta,
\end{equation}
and
\begin{equation}
\bbE_{t-1}\norm{d_t}_\cH^2
= P\norm{u}_\cH^2(X_{t-1})-\norm{Pu(X_{t-1})}_\cH^2
\leq\tau_\theta^2.
\end{equation}
Thus \(\la M\ra_T\leq T\tau_\theta^2\) almost surely. Applying Theorem~\ref{thm:pinelis-bernstein} with \(R=2\tau_\theta\) and \(V=T\tau_\theta^2\), and bounding the boundary term by \(2\tau_\theta\), gives, with probability at least \(1-\delta\),
\begin{equation}
\MMD_\phi(\wh\mu_T,\pi)
\leq \tau_\theta\sqrt{\frac{2\log(2/\delta)}{T}}
+\frac{\tau_\theta}{T}\left(\frac43\log(2/\delta)+2\right).
\end{equation}
Since \(\log(2/\delta)\geq\log2\) and \(4/3+2/\log2<5\), this proves~\eqref{eq:no-gap}.
\end{proof}

\subsection{Learning guarantees}
\label{app:learning-proofs}
\label{app:error-bounds}
\label{app:sample-rate}
\begin{proof}[Proof of Proposition~\ref{prop:krr-error-bound}]
Fix \( \lambda > 0 \). By the standard argument, for any \( w \in \cH \) and the corresponding function \( f_w(x) := \la w, \phi(x) \ra_\cH \), it holds that
\begin{equation}
\cE(f_w, f_\star)
:= \bnorm{f_w - f_\star}_\pi^2
= \bnorm{\Sigma^{1/2}(w-w_\star)}_\cH^2.
\end{equation}
Since the model is well specified, \(h=\Sigma w_\star\). Moreover, using the empirical normal equation \( \wh\Sigma_\lambda \wh w_\lambda=\wh h \), we have
\begin{align}
\wh \Sigma_\lambda (\wh w_\lambda-w_\star)
= \wh h-\wh\Sigma_\lambda w_\star
= \wh h-\wh\Sigma w_\star-\lambda w_\star,
\end{align}
so that
\begin{equation}
\Sigma^{1/2} (\wh w_\lambda-w_\star)
= \Sigma^{1/2} \wh \Sigma_\lambda^{-1} \Sigma_\lambda^{1/2} \Sigma_\lambda^{-1/2} \big( \wh h-\wh\Sigma w_\star-\lambda w_\star \big).
\end{equation}
Hence by the triangle inequality,
\begin{equation}
\bnorm{\Sigma^{1/2}(\wh w_\lambda-w_\star)}_\cH
\leq
\underbrace{
\bnorm{ \Sigma^{1/2}\wh\Sigma_\lambda^{-1} \Sigma_\lambda^{1/2} }
}_{\term{stability}}
\Big(
\underbrace{
\bnorm{\Sigma_\lambda^{-1/2} (\wh h-\wh \Sigma w_\star)}_\cH
}_{\term{stochastic}}
+
\underbrace{
\lambda \bnorm{\Sigma_\lambda^{-1/2}w_\star}_\cH
}_{\term{reg}}
\Big).
\label{eq:error-decom-well}
\end{equation}
We proceed by bounding each term separately.

\paragraph{Deterministic bounds.}
From spectral properties of the involved operators, the following bounds hold:
\begin{equation}
\bnorm{\Sigma_\lambda^{-1/2}} \leq \frac{1}{\sqrt{\lambda}}
\qandq
\bnorm{ \Sigma^{1/2} \Sigma_\lambda^{-1/2} } \leq 1.
\label{eq:det-op-bound}
\end{equation}

\paragraph{Regularization bias.}
By Assumption~\ref{ass:learning-problem}~(i),
\begin{align}
\term{reg}
:= \lambda\bnorm{\Sigma_\lambda^{-1/2}w_\star}_\cH
= \lambda \bnorm{ \Sigma_\lambda^{-1/2} \Sigma^{r-1/2}g_\star }_\cH
\leq \lambda^r G_\star,
\qquad \frac12 \leq r \leq 1,
\label{eq:reg-term}
\end{align}
by the spectral theorem.

\paragraph{Stability term.} Introduce the preconditioned covariance fluctuation
\begin{equation}
\Delta_\lambda
:= \Sigma_\lambda^{-1/2} \big(\wh \Sigma - \Sigma\big) \Sigma_\lambda^{-1/2}.
\end{equation}
Then
\begin{equation}
\wh \Sigma_\lambda
= \wh \Sigma + \lambda I
= \Sigma_\lambda + \big( \wh \Sigma - \Sigma \big)
= \Sigma_\lambda^{1/2} (I+\Delta_\lambda) \Sigma_\lambda^{1/2}.
\end{equation}
Therefore, on the event \(\norm{\Delta_\lambda}<1\), the operator
\(I+\Delta_\lambda\) is invertible and
\begin{equation}
\wh\Sigma_\lambda^{-1}
= \Sigma_\lambda^{-1/2} (I+\Delta_\lambda)^{-1} \Sigma_\lambda^{-1/2}.
\end{equation}
Substituting this identity into \(\term{stability}\) and applying the bound from Equation~\eqref{eq:det-op-bound}, we obtain
\begin{equation}
\label{eq:stab-bound}
\term{stability}
:= \bnorm{ \Sigma^{1/2}\wh\Sigma_\lambda^{-1}\Sigma_\lambda^{1/2} }
\leq \bnorm{\Sigma^{1/2}\Sigma_\lambda^{-1/2}} \cdot \bnorm{(I+\Delta_\lambda)^{-1}}
\leq \frac{1}{1-\norm{\Delta_\lambda}}.
\end{equation}
In particular, on the event \(\norm{\Delta_\lambda} \leq 1/2\), we get \( \term{stability} \leq 2 \).

\medskip
To bound the \( \norm{\Delta_\lambda} \) term, define the following preconditioned feature map \( \Phi_\lambda: \cX \to \cL_2(\cH) \),
\begin{equation}
\Phi_\lambda(x)
:= \Sigma_\lambda^{-1/2} \left(\phi(x) \otimes \phi(x)\right) \Sigma_\lambda^{-1/2}
= \Sigma_\lambda^{-1/2} \, \Phi(x) \, \Sigma_\lambda^{-1/2},
\quad x \in \cX.
\end{equation}
An important observation which will be used later: by self-adjointness of \( \Sigma_\lambda^{-1/2} \) we can write \( \Phi_\lambda \) as
\begin{equation}
\Phi_\lambda(x) = \phi_\lambda(x) \otimes \phi_\lambda(x),
\quad \text{where} \; \phi_\lambda(x) := \Sigma_\lambda^{-1/2} \phi(x).
\label{eq:phi-lambda-def}
\end{equation}
In particular, \( \norm{ \Phi_\lambda(x) }_{\cL_2(\cH)} = \norm{\phi_\lambda(x)}_\cH^2 \).

\medskip \noindent
With \( \wh \Sigma = \wh \mu_T(\Phi) \) and \( \Sigma = \pi(\Phi) \), by linearity we get
\begin{equation}
\norm{ \Delta_\lambda }
\leq \norm{ \Delta_\lambda }_{\cL_2(\cH)}
= \MMD_{\Phi_\lambda}(\wh \mu_T, \pi).
\end{equation}
This allows us to apply Theorem~\ref{thm:mmd-occupation-measure-whp} to obtain a high-probability upper bound on this term.

\begin{remark}
The identity above is the key observation behind the learning result. It shows that the stability of the empirical inverse problem is governed directly by the convergence of the occupation measure of the process \( \wh \mu_T \) to the invariant measure \( \pi \) in a certain kernel-induced geometry \( \MMD_{\Phi_\lambda} \).
\end{remark}

\medskip
Write \(\Phi_\lambda=A_\lambda\Phi\), where
\begin{equation}
A_\lambda:\cL_2(\cH)\to\cL_2(\cH),
\qquad A_\lambda B
:=\Sigma_\lambda^{-1/2}B\Sigma_\lambda^{-1/2}.
\end{equation}
By~\eqref{eq:det-op-bound}, \( \opnorm{A_\lambda}_{\cL_2(\cH)} \leq\lambda^{-1} \). Proposition~\ref{prop:mmd-affine-transformation} therefore gives
\begin{equation}
\MMD_{\Phi_\lambda}(\mu,\nu)
\leq\lambda^{-1}\MMD_\Phi(\mu,\nu),
\qquad \mu,\nu\in\cP(\cX).
\end{equation}
Under Assumption~\ref{ass:feature-map}, \(\norm{\Phi}_\infty\leq\kappa^2\), and the Hilbert-scale condition holds for \(\Phi\). Applying Proposition~\ref{prop:mmd-alpha-bound} yields
\begin{equation}
\sup_{x\in\cX}
\MMD_{\Phi_\lambda}\bigl(P^n(x,\cdot),\pi\bigr)
\leq \left(\frac{\kappa^{2(1-\alpha)}\kappa_\alpha}{\lambda}\right) \rho_\alpha^n,
\qquad n \ge 0.
\label{eq:phi-lambda-contraction}
\end{equation}

\medskip \noindent
By spectral properties~\eqref{eq:det-op-bound},
\begin{equation}
\norm{\Phi_\lambda}_\infty
\leq \frac{1}{\lambda} \norm{\Phi}_\infty
\leq \frac{\kappa^2}{\lambda}.
\label{eq:phi-lambda-inf-norm}
\end{equation}
Moreover, by non-negativity of the norm,
\begin{align}
\norm{\Phi_\lambda}_{\pi}^2
:= \int \norm{ \Phi_\lambda(x) }_{\cL_2(\cH)}^2 \, \pi(dx)
\leq \norm{\Phi_\lambda}_{\infty} \cdot \int \norm{ \Phi_\lambda(x) }_{\cL_2(\cH)} \, \pi(dx).
\label{eq:xi1-l2-bound}
\end{align}
Furthermore,
\begin{equation}
\int \norm{ \Phi_\lambda(x) }_{\cL_2(\cH)} \, \pi(dx)
= \int \norm{\phi_\lambda}_\cH^2 \, \pi(dx)
= \Tr \big( \Sigma_\lambda^{-1} \Sigma \big)
=: \cN(\lambda).
\label{eq:phi-lambda-pre-l2-norm}
\end{equation}
Putting~\eqref{eq:phi-lambda-inf-norm} and~\eqref{eq:phi-lambda-pre-l2-norm} into~\eqref{eq:xi1-l2-bound},
\begin{equation}
\norm{\Phi_\lambda}_{\pi}^2 \leq \frac{\kappa^2 \cN(\lambda)}{\lambda}.
\label{eq:phi-lambda-l2-norm}
\end{equation}

\medskip \noindent
Applying Theorem~\ref{thm:mmd-occupation-measure-whp} with~\eqref{eq:phi-lambda-contraction} and~\eqref{eq:phi-lambda-l2-norm} yields, for any \(\delta\in(0,1)\), with probability at least \(1-\delta\),
\begin{equation}
\norm{\Delta_\lambda}_{\cL_2(\cH)}
\leq 2\sqrt{ \frac{ \kappa^2\cN(\lambda)\tau_\rel\log(4/\delta) }{ \lambda T }}
+ \frac{ 12B_\alpha\sqrt{\tau_\rel}\log(4/\delta) }{ (1-\rho_\alpha)\lambda T },
\label{eq:delta-lambda-bound}
\end{equation}
where \( B_\alpha:=\kappa^{2(1-\alpha)}\kappa_\alpha \).

\paragraph{Stochastic error.} To control the term \( \term{stochastic} \), consider the following observable \( \xi: \cX \times \cX \to \cH \),
\begin{equation}
\xi(x, x') := \left(x' - f_\star(x)\right) \phi_\lambda(x) \in \cH,
\qquad x, x' \in \cX.
\end{equation}
Observe, that
\begin{equation}
\frac{1}{T} \sum_{t=0}^{T-1} \xi(x_t, x_{t+1}) = \Sigma_\lambda^{-1/2} \big(\wh h - \wh \Sigma w_\star\big)
\qandq
\int \xi(x, x') \, \pi(dx) P(x, dx') = 0.
\end{equation}
Now, consider the stochastic process \( (d_t)_{t \ge 1} \) given by
\begin{equation}
d_{t}
:= \xi(X_{t-1}, X_{t})
= \left(X_{t} - f_\star(X_{t-1})\right) \phi_\lambda(X_{t-1}),
\qquad t\geq1.
\end{equation}
Let
\begin{equation}
\cF_t
:= \sigma\left((X_0, X_1), \ldots (X_{t-1}, X_{t})\right)
= \sigma(X_0,\ldots,X_t),
\qquad t\geq0.
\end{equation}
Then \(d_t\) is \(\cF_t\)-measurable and, by Markovianity,
\begin{align}
\bbE \left[ d_{t} \mid \cF_{t-1} \right]
&= \bbE \left[ \left(X_{t} - f_\star(X_{t-1})\right) \phi_\lambda(X_{t-1}) \mid X_{t-1} \right] \\
&= \underbrace{\left( \bbE[ X_{t} \mid X_{t-1} ] - f_\star(X_{t-1}) \right)}_{= 0} \phi_\lambda(X_{t-1}).
\label{eq:dt-martingale-exp}
\end{align}
Therefore, \( (d_t)_{t \ge 1} \) is a martingale difference sequence with respect to \( (\cF_t)_t \) and \( M_T := \sum_{t=1}^T d_t \) is a martingale. Moreover, by measurability,
\begin{equation}
\bbE_{t-1}\norm{d_t}_\cH^2
= \norm{\phi_\lambda(X_{t-1})}_\cH^2 \cdot \bbE\left[ \abs{X_t-f_\star(X_{t-1})}^2 \mid X_{t-1} \right].
\end{equation}
Hence, under Assumption~\ref{ass:markov-chain}\textup{(iii)},
\begin{equation}
\la M \ra_T := \sum_{t=1}^{T} \bbE_{t-1} \norm{d_{t}}_\cH^2
\leq \sigma^2 \cdot \sum_{t=1}^{T} \norm{ \phi_\lambda(X_{t-1}) }_\cH^2
= \sigma^2 \cdot \sum_{s=0}^{T-1} \norm{ \phi_\lambda(X_s) }_\cH^2.
\label{eq:m-t-pre-bound}
\end{equation}
Define a measurable bounded real-valued observable \( g_\lambda(x) :=  \norm{ \phi_\lambda(x) }_\cH^2 = \norm{\Phi_\lambda(x)}_{\cL_2(\cH)} \). Its stationary variance, by~\eqref{eq:phi-lambda-l2-norm}, satisfies
\begin{align}
v_\lambda
:= \pi\left( \big(g_\lambda-\pi(g_\lambda)\big)^2 \right)
\leq \pi \big(g_\lambda^2\big)
= \norm{\Phi_\lambda}_\pi^2
\leq \frac{\kappa^2 \cN(\lambda)}{\lambda}.
\label{eq:g-lambda-variance}
\end{align}
Applying Theorem~\ref{thm:jiang-bernstein-stationary} to the centered observable \(g_\lambda-\pi(g_\lambda)\), which is bounded and measurable, gives, for every \(\varepsilon>0\),
\begin{equation}
\bbP\left\{ \sum_{t=0}^{T-1}g_\lambda(X_t) - T\cN(\lambda) > \varepsilon \right\}
\leq \exp\left\{ -\frac{\varepsilon^2} {2\left( \alpha_1(\rho) T v_\lambda + \frac{5\kappa^2}{(1-\rho)\lambda}\varepsilon \right)} \right\}.
\label{eq:g-lambda-bernstein}
\end{equation}
Consequently, letting \(L_\delta:=\log(4/\delta)\), with probability at
least \(1-\delta/2\),
\begin{align}
\sum_{t=0}^{T-1}g_\lambda(X_t)
\leq T\cN(\lambda)
+ \sqrt{2\alpha_1(\rho) T v_\lambda L_\delta}
+ \frac{10\kappa^2L_\delta}{(1-\rho)\lambda}
\leq T\cN(\lambda)
+ 2\kappa \sqrt{ \frac{T\cN(\lambda)L_\delta} {(1-\rho)\lambda} }
+ \frac{10\kappa^2L_\delta} {(1-\rho)\lambda},
\label{eq:g-lambda-sum-bound}
\end{align}
where in the second inequality we used~\eqref{eq:g-lambda-variance} and \(1+\rho\leq2\). Putting~\eqref{eq:g-lambda-sum-bound} back into Equation~\eqref{eq:m-t-pre-bound}, we obtain
\begin{align}
\la M\ra_T
\leq T\sigma^2\cN(\lambda)
+ 2\sigma^2\kappa \sqrt{ \frac{T\cN(\lambda)L_\delta}{(1-\rho)\lambda} }
+ \frac{10\sigma^2\kappa^2L_\delta} {(1-\rho)\lambda}
=: V_T
\label{eq:t3-quad-var-bound}
\end{align}
with probability at least \(1-\delta/2\). We now apply Theorem~\ref{thm:pinelis-bernstein-hilbert} with
\begin{equation}
R_\lambda:=\frac{\kappa M}{\sqrt{\lambda}}.
\end{equation}
Combining the two estimates and using \(\sqrt{a+b+c}\leq\sqrt a+\sqrt b+\sqrt c\), we obtain, with probability at least \(1-\delta\),
\begin{align}
\term{stochastic}
&= \frac{1}{T}\norm{M_T}_\cH \\
&\leq \frac{\sqrt{2V_T L_\delta}}{T}
+ \frac{2 R_\lambda L_\delta}{3T} \\
&\leq \sqrt{ \frac{2\sigma^2\cN(\lambda)L_\delta}{T} }
+ 2\sigma \left( \frac{\kappa^2\cN(\lambda)}{(1-\rho)\lambda} \right)^{1/4} \frac{L_\delta^{3/4}}{T^{3/4}}
+ \frac{2\sqrt{5}\,\sigma\kappa L_\delta} {\sqrt{(1-\rho)\lambda}\,T}
+ \frac{2\kappa M L_\delta} {3\sqrt{\lambda}\,T}.
\label{eq:t3-bound}
\end{align}

\paragraph{Putting the bounds together.}
We now collect the estimates obtained above. Apply \eqref{eq:delta-lambda-bound} and \eqref{eq:t3-bound} with $\delta/2$ in place of $\delta$: each then holds with probability at least $1-\delta/2$, and in both the confidence factor becomes \( L_\delta := \log(8/\delta) \). By~\eqref{eq:delta-lambda-bound}, a sufficient condition for \(\norm{\Delta_\lambda}_{\cL_2(\cH)}\leq1/2\) is
\begin{equation}
T\geq
\max\left\{
\frac{64 \kappa^2 \tau_{\rel} \cN(\lambda) L_\delta}{\lambda},
\,
\frac{48B_\alpha\sqrt{\tau_{\rel}} L_\delta}
{(1-\rho_\alpha)\lambda}
\right\}.
\end{equation}
On this event \( \term{stability} \le 2 \) by \eqref{eq:stab-bound}. Substituting \eqref{eq:reg-term} and \eqref{eq:t3-bound} into \eqref{eq:error-decom-well} and taking a union bound over the two events, we obtain, with probability at least \( 1-\delta \),
\begin{align}
\bnorm{\wh f_\lambda-f_\star}_\pi
\leq\;&
2\lambda^rG_\star
+ 2\sigma\sqrt{\frac{2 \cN(\lambda) L_\delta}{T}} \notag \\
&+ 4\sigma \left( \frac{\kappa^2 \tau_\rel \, \cN(\lambda)}{\lambda} \right)^{1/4} \frac{L_\delta^{3/4}}{T^{3/4}}
+ \frac{4\sqrt{5 \tau_\rel}\,\sigma\kappa L_\delta}{\sqrt{\lambda}\,T}
+ \frac{4\kappa M L_\delta}{3\sqrt{\lambda}\,T}
\label{eq:error-bound-final-app}
\end{align}
as claimed.
\end{proof}

\begin{remark}[Sharpness of the stochastic term correction]
The correction of order
\begin{equation}
T^{-3/4} \left( \frac{\cN(\lambda)}{\lambda} \right)^{1/4}
\end{equation}
in the term~\eqref{eq:t3-bound} is natural for the specific proof strategy but is unlikely to be sharp. In particular, if \(\cN(\lambda)\lesssim \lambda^{-p}\), the correction is of order \(T^{-3/4}\lambda^{-(p+1)/4}\). Asymptotically, this term is still dominated by
\begin{equation}
\sqrt{\frac{\cN(\lambda)}{T}}
\end{equation}
under \( T \lambda^{1-p} \to \infty \) regularization schedule, which coincides with the expected scale from classical i.i.d. analysis.
\end{remark}

\begin{proof}[Proof of Theorem~\ref{thm:final-sample-rate}]
By Assumption~\ref{ass:learning-problem}\textup{(ii)}, the effective dimension satisfies
\begin{equation}
\cN(\lambda)\lesssim \lambda^{-p},
\qquad p=\frac{1}{b}\in(0,1).
\label{eq:effective-dimension-polynomial}
\end{equation}
Therefore, up to multiplicative constants and logarithmic factors, the bound in~\eqref{eq:error-bound-final} becomes
\begin{equation}
\bnorm{\wh f_\lambda-f_\star}_\pi
\lesssim \lambda^r
+ \frac{1}{\sqrt{\lambda^pT}}
+ \frac{1}{\lambda^{(p+1)/4}T^{3/4}}
+ \frac{1}{\sqrt{\lambda}T}.
\label{eq:error-bound-rate}
\end{equation}
The first two terms are the usual regularization bias and stochastic error. Balancing them gives
\begin{equation}
\lambda^r
\asymp \frac{1}{\sqrt{\lambda^pT}},
\qquad \text{so that} \qquad
\lambda_T^r
\asymp T^{-\frac{r}{2r+p}}.
\label{eq:lambda-rate-proof}
\end{equation}
We now need to check that the two Bernstein correction terms in~\eqref{eq:error-bound-rate} are of no larger order. First,
\begin{align}
\frac{1}{\lambda_T^{(p+1)/4}T^{3/4}}
= T^{-\frac34+\frac{p+1}{4(2r+p)}}
= T^{-\frac{6r+2p-1}{4(2r+p)}}.
\end{align}
Since \(r\geq1/2\) and \(p>0\), we have \( (6r+2p-1)/4 \geq r, \) or equivalently \(2r+2p\geq1\). Therefore,
\begin{equation}
\frac{1}{\lambda_T^{(p+1)/4}T^{3/4}}
\lesssim T^{-\frac{r}{2r+p}},
\qquad \text{and} \qquad
\frac{1}{\sqrt{\lambda_T}T} = T^{-1+\frac{1}{2(2r+p)}}.
\end{equation}
Since \(r+p\geq1/2\),
\begin{equation}
-1+\frac{1}{2(2r+p)}
\leq -\frac{r}{2r+p},
\implies \frac{1}{\sqrt{\lambda_T}T}
\lesssim T^{-\frac{r}{2r+p}}.
\end{equation}
Thus all the terms in~\eqref{eq:error-bound-rate} are of order at most \(T^{-r/(2r+p)}\).

\medskip
It remains to verify the stability condition. By
Assumption~\ref{ass:learning-problem}\textup{(ii)} and Equation~\eqref{eq:delta-lambda-bound}, up to multiplicative constants and logarithmic factors,
\begin{equation}
\norm{\Delta_\lambda}
\lesssim \frac{1}{\sqrt{\lambda^{p+1}T}}
+ \frac{1}{\lambda T}.
\label{eq:delta-rate-proof}
\end{equation}
For \( \lambda_T\asymp T^{-1/(2r+p)} \) we have
\begin{equation}
\frac{1}{\sqrt{\lambda_T^{p+1}T}}
= T^{-\frac{2r-1}{2(2r+p)}},
\qquad \text{and} \qquad
\frac{1}{\lambda_T T}
= T^{-1+\frac{1}{2r+p}}.
\end{equation}
Both terms converge to zero for \(r>1/2\). At the endpoint \(r=1/2\), the first term is of constant order, and the multiplicative constant in the choice of \(\lambda_T\) can be taken sufficiently large so that \(\norm{\Delta_{\lambda_T}}\leq1/2\).

\medskip
Consequently, on the stability event,
\begin{equation}
\bnorm{\wh f_{\lambda_T}-f_\star}_\pi
\lesssim T^{-\frac{r}{2r+p}},
\end{equation}
and squaring both sides yields
\begin{equation}
\cE\big(\wh f_{\lambda_T},f_\star\big)
\lesssim T^{-\frac{2r}{2r+p}},
\end{equation}
which proves the claim.
\end{proof}

\section{Fourier analysis of randomly perturbed torus maps}
\label{app:fourier-examples}
We give a common Fourier representation for Examples~\ref{ex:noisy-irrational-rotation} and~\ref{ex:noisy-expanding-map}. The same representation also applies to the covariance kernel used in Appendix~\ref{app:noisy-doubling-learning}.

\subsection{Hilbert-scale space}
\label{app:fourier-common}

Let \(\pi\) be normalized Lebesgue measure on \(\bbT^d\), and write
\begin{equation}
e_m(x):=e^{2\pi i m\cdot x},
\qquad
\wh f(m):=\int_{\bbT^d}f(x)\overline{e_m(x)}\,\pi(dx).
\end{equation}
Complex Fourier notation is used for convenience; all constructions can equivalently be expressed using real sine and cosine coordinates.

\medskip
Let \((a_m)_{m\in\bbZ^d}\) satisfy
\begin{equation}
a_0\geq0,
\qquad
a_{-m}=a_m>0 \quad (m\neq0),
\qquad
\sum_{m\in\bbZ^d}a_m<\infty.
\end{equation}
Define the continuous real-valued positive-definite kernel
\begin{equation}
k_a(x,y)
:=\sum_{m\in\bbZ^d}a_m e^{2\pi i m\cdot(x-y)}
\label{eq:fourier-general-kernel}
\end{equation}
and its canonical feature map
\(\phi_a(x):=k_a(x,\cdot)\).
The kernel is uniformly bounded, with
\begin{equation}
\norm{\phi_a(x)}_{\cH_{k_a}}^2
=k_a(x,x)
=\sum_{m\in\bbZ^d}a_m.
\end{equation}
Since \(\int k_a(x,y)\,\pi(dy)=a_0\), centering removes precisely the constant Fourier mode:
\begin{equation}
\overline k_a(x,y)
=k_a(x,y)-a_0
=\sum_{m\neq0}a_m e^{2\pi i m\cdot(x-y)}.
\end{equation}
Thus including or omitting the constant mode does not change the centered Hilbert scale or the MMD between probability measures.

\medskip
The centered kernel integral operator satisfies
\begin{equation}
\overline L_a e_m=a_m e_m,
\qquad m\neq0.
\end{equation}
For \(\alpha>0\), its positive Hilbert scale is therefore
\begin{equation}
\cE_\alpha
= \left\{ f\in L_0^2(\pi): \sum_{m\neq0}a_m^{-\alpha}\abs{\wh f(m)}^2<\infty \right\},
\qquad \text{with} \quad
\norm{f}_\alpha^2
=\sum_{m\neq0}a_m^{-\alpha}\abs{\wh f(m)}^2.
\label{eq:fourier-power-norm}
\end{equation}
Suppose that
\begin{equation}
\kappa_\alpha^2
:=\sum_{m\neq0}a_m^\alpha<\infty.
\label{eq:fourier-embedding-constant}
\end{equation}
Then the series
\begin{equation}
\overline k_{a,\alpha}(x,y)
:=\sum_{m\neq0}a_m^\alpha e^{2\pi i m\cdot(x-y)}
\end{equation}
converges absolutely and uniformly, and its RKHS is \(\cE_\alpha\).
The embedding into bounded continuous functions follows directly from Cauchy--Schwarz:
\begin{equation}
\sum_{m\neq0}\abs{\wh f(m)}
\leq
\left(\sum_{m\neq0}a_m^\alpha\right)^{1/2}
\left(\sum_{m\neq0}a_m^{-\alpha}\abs{\wh f(m)}^2\right)^{1/2}
=\kappa_\alpha\norm{f}_\alpha.
\end{equation}
Thus every \(f\in\cE_\alpha\) has a uniformly convergent Fourier representation and
\begin{equation}
\norm{f}_\infty\leq\kappa_\alpha\norm{f}_\alpha.
\label{eq:fourier-uniform-embedding}
\end{equation}
For a finite signed measure \(\zeta\), write \(\wh\zeta(m):=\int\overline{e_m(x)}\,\zeta(dx)\). Its restriction to \(\cE_\alpha\) has dual norm
\begin{equation}
\norm{\zeta}_{-\alpha}^2
=\sum_{m\neq0}a_m^\alpha\abs{\wh\zeta(m)}^2.
\end{equation}
In particular, since \(\wh\pi(m)=0\) for \(m\neq0\),
\begin{equation}
\norm{\delta_x-\pi}_{-\alpha}=\kappa_\alpha,
\qquad
\norm{\mu-\pi}_{-\alpha}\leq\kappa_\alpha,
\quad \mu\in\cP(\bbT^d).
\label{eq:fourier-measure-bound}
\end{equation}
The second bound uses \(\abs{\wh\mu(m)}\leq1\). This explains why the convergence bound toward \(\pi\) has prefactor \(\kappa_\alpha\), without an additional factor of two.

\subsection{Contraction formula}
\label{app:fourier-contraction}

Consider
\begin{equation}
X_{t+1}=\ell X_t+\omega+\xi_{t+1}\pmod1,
\label{eq:fourier-general-dynamics}
\end{equation}
where \(\ell\geq1\) is an integer, \(\omega\in\bbT^d\), and the noises are i.i.d. with probability density \(\upsilon\). Multiplication by \(\ell\), translation, and independent addition each preserve normalized Lebesgue measure, so \(\pi P=\pi\). With the Fourier convention
\begin{equation}
\wh\upsilon(m)
:=\int_{\bbT^d}e^{-2\pi i m\cdot u} \, \upsilon(u) du,
\end{equation}
the Markov operator acts as
\begin{equation}
Pe_m
=e^{2\pi i m\cdot\omega}\,\wh\upsilon(-m)e_{\ell m}.
\label{eq:fourier-markov-action}
\end{equation}

\begin{lemma}[Fourier contraction formula]
\label{lem:fourier-contraction}
For the dynamics \eqref{eq:fourier-general-dynamics},
\begin{equation}
\rho
:=\opnorm{P}_{L_0^2(\pi)}
=\sup_{m\neq0}\abs{\wh\upsilon(m)}<1.
\label{eq:fourier-l2-contraction}
\end{equation}
Suppose that \eqref{eq:fourier-embedding-constant} holds and that
\begin{equation}
R_\alpha
:=\sup_{m\neq0}
\abs{\wh\upsilon(m)}
\left(\frac{a_m}{a_{\ell m}}\right)^{\alpha/2}
<\infty.
\label{eq:fourier-weighted-contraction}
\end{equation}
Then \(P\) acts boundedly on \(\cE_\alpha\), and \( \opnorm{P}_{\cE_\alpha} = R_\alpha \). Furthermore, if \(R_\alpha<1\), Assumption~\ref{ass:hilbert-scale-gap} holds with \(\rho_\alpha=R_\alpha\), and
\begin{equation}
\MMD_\alpha(\mu P^n,\pi)
\leq\kappa_\alpha R_\alpha^n,
\qquad \mu\in\cP(\bbT^d), \; n\geq0.
\label{eq:fourier-uniform-mmd}
\end{equation}
\end{lemma}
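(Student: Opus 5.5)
The plan is to work entirely in the Fourier basis \(\{e_m\}_{m\neq0}\), which simultaneously diagonalizes \(\overline L_a\) and turns \(P\) into a weighted shift \(m\mapsto\ell m\) via \eqref{eq:fourier-markov-action}. First I would verify \eqref{eq:fourier-markov-action} by direct computation: \((Pe_m)(x)=\bbE\,e^{2\pi i m\cdot(\ell x+\omega+\xi)}=e^{2\pi i m\cdot\omega}\wh\upsilon(-m)e_{\ell m}(x)\). Since \(\ell\) is a nonzero integer, the map \(m\mapsto\ell m\) is injective on \(\bbZ^d\setminus\{0\}\). Hence for \(f=\sum_{m\neq0}\wh f(m)e_m\) the images \(Pe_m\) are mutually orthogonal, and
\[
\norm{Pf}_\pi^2=\sum_{m\neq0}\abs{\wh\upsilon(m)}^2\abs{\wh f(m)}^2,
\]
using \(\abs{\wh\upsilon(-m)}=\abs{\wh\upsilon(m)}\), which holds because \(\upsilon\) is real.

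This immediately gives \(\rho=\sup_{m\neq0}\abs{\wh\upsilon(m)}\). Testing on single modes \(e_m\) shows the supremum is attained in the sense of operator norm, so \(\rho\) equals this value exactly. To get strict inequality \(\rho<1\), I would use that \(\upsilon\) is a density, so that \(\abs{\wh\upsilon(m)}<1\) for each \(m\neq0\): equality would force \(e^{-2\pi i m\cdot u}\) to be constant \(\upsilon\)-a.e., which is impossible for an absolutely continuous law. Combined with the Riemann--Lebesgue lemma, \(\wh\upsilon(m)\to0\) as \(\abs{m}\to\infty\), the supremum over \(m\neq0\) is attained at some finite \(m\) and is therefore strictly less than \(1\).

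For the weighted norm, the same orthogonality in \eqref{eq:fourier-power-norm} gives
\[
\norm{Pf}_\alpha^2=\sum_{m\neq0}a_{\ell m}^{-\alpha}\abs{\wh\upsilon(m)}^2\abs{\wh f(m)}^2=\sum_{m\neq0}\Bigl[\abs{\wh\upsilon(m)}^2(a_m/a_{\ell m})^{\alpha}\Bigr]a_m^{-\alpha}\abs{\wh f(m)}^2.
\]
This yields \(\norm{Pf}_\alpha\le R_\alpha\norm{f}_\alpha\). Normalized single modes \(a_m^{\alpha/2}e_m\) show the bound is sharp, so \(\opnorm{P}_{\cE_\alpha}=R_\alpha\).

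Next I would check Assumption~\ref{ass:hilbert-scale-gap}. For (i), take the continuous realization given by the uniformly convergent Fourier series; \eqref{eq:fourier-uniform-embedding} yields \(\kappa_\alpha\). For (ii), I must confirm that \(P\) preserves this pointwise realization, not merely \(L^2\) classes. Since \(f\in\cE_\alpha\) has absolutely summable Fourier coefficients, dominated convergence lets me exchange \(P\) with the series pointwise. Thus \(Pf\) equals, everywhere, the absolutely convergent series \(\sum\wh f(m)e^{2\pi i m\cdot\omega}\wh\upsilon(-m)e_{\ell m}\), which is the continuous representative of an element of \(\cE_\alpha\). Alternatively, the remark after Corollary~\ref{cor:commuting-hilbert-scale} applies, since \(\supp\pi=\bbT^d\) and \(P\) is Feller.

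Finally, \eqref{eq:fourier-uniform-mmd} follows from the duality contraction \eqref{eq:dual-ipm-contraction} together with the bound \(\norm{\mu-\pi}_{-\alpha}\le\kappa_\alpha\) from \eqref{eq:fourier-measure-bound}. Iterating gives \(\norm{\mu P^n-\pi}_{-\alpha}\le R_\alpha^n\norm{\mu-\pi}_{-\alpha}\le\kappa_\alpha R_\alpha^n\), which avoids the factor \(2\) that Proposition~\ref{prop:mmd-alpha-bound} would produce. The main obstacle I expect is the rigorous pointwise-preservation step, together with keeping the orthogonality argument clean; both hinge on the injectivity of \(m\mapsto\ell m\). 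Everything else is a direct coefficient computation.
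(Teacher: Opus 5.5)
Your proposal is correct and follows the paper's proof essentially step for step: the Fourier action together with injectivity of \(m\mapsto\ell m\) for both the \(L^2(\pi)\) and weighted norms, single modes for sharpness, absolute continuity plus Riemann--Lebesgue for \(\rho<1\), uniform convergence of the Fourier series for pointwise preservation, and the dual contraction for the MMD bound. The only inaccuracy is your remark about avoiding a factor \(2\): Proposition~\ref{prop:mmd-alpha-bound} already yields the prefactor \(\kappa_\alpha\), because the centered realization has \(\sup_x\norm{\overline\phi_\alpha(x)}_\alpha=\kappa_\alpha\), so your inlined iteration is simply that proposition's proof.
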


\begin{proof}
The map \(m\mapsto\ell m\) is injective. Thus \eqref{eq:fourier-markov-action} and orthogonality give
\begin{equation}
\norm{Pf}_\pi^2
=\sum_{m\neq0}\abs{\wh\upsilon(m)}^2
\abs{\wh f(m)}^2,
\qquad f\in L_0^2(\pi),
\end{equation}
which proves the norm identity in \eqref{eq:fourier-l2-contraction}.

\medskip \noindent
For each \(m\neq0\), one has \(\abs{\wh\upsilon(m)}<1\). Indeed, equality would force the unit-modulus random variable \(e^{-2\pi i m\cdot\xi_1}\) to be constant almost surely. The noise would then be supported on a level set of a nonconstant torus character, which has Lebesgue measure zero, contradicting absolute continuity. The Riemann--Lebesgue lemma gives
\(\wh\upsilon(m)\to0\) as \(\abs{m}\to\infty\).
Combining this decay with the strict inequality for each of the finitely many remaining modes yields \(\rho<1\).

\medskip \noindent
For a centered trigonometric polynomial, the same calculation in the weighted norm gives
\begin{align}
\norm{Pf}_\alpha^2
&=\sum_{m\neq0}
a_{\ell m}^{-\alpha}
\abs{\wh\upsilon(m)}^2
\abs{\wh f(m)}^2
\leq R_\alpha^2
\sum_{m\neq0}a_m^{-\alpha}\abs{\wh f(m)}^2
=R_\alpha^2\norm{f}_\alpha^2.
\end{align}
Testing on individual Fourier modes gives the reverse inequality for the operator norm. The same norm is obtained on the real space by using the corresponding sine and cosine modes. The estimate extends by density to \(\cE_\alpha\). By \eqref{eq:fourier-uniform-embedding}, convergence in this space implies uniform convergence, so the extension agrees with the pointwise Markov action. The MMD bound follows from Proposition~\ref{prop:mmd-alpha-bound} with \(\rho_\alpha=R_\alpha\).
\end{proof}

\subsection{Noisy rotations}
\label{app:noisy-rotation-details}

For Example~\ref{ex:noisy-irrational-rotation}, take \(\ell=1\), \(a_0=0\), and
\begin{equation}
a_m=(1+\abs{m}^2)^{-s},
\qquad m\neq0.
\end{equation}
The embedding constant is
\begin{equation}
\kappa_\alpha^2
=\sum_{m\neq0}(1+\abs{m}^2)^{-s\alpha}.
\end{equation}
Comparison with lattice shells shows that this series is finite if and only if \(2s\alpha>d\). 
Since \(a_{\ell m}=a_m\), Lemma~\ref{lem:fourier-contraction} gives
\begin{equation}
\rho
=\sup_{m\neq0}\abs{\wh\upsilon(m)}<1.
\end{equation}
Since \(P\) and \(\overline L_{\phi_s}\) are Fourier multipliers and \(\cG=L_0^2(\pi)\), Corollary~\ref{cor:commuting-hilbert-scale} gives
\(\rho_\alpha=\rho\). The pointwise-preservation requirement holds because \(P\) is Feller, \(\pi\) has full support, and the Sobolev realization of \(\cE_\alpha\) consists of continuous functions whenever \(s\alpha>d/2\). In particular, \(s>d/2\) permits \(\alpha=1\). The canonical feature map of the centered kernel then satisfies by Proposition~\ref{prop:mmd-alpha-bound} 
\begin{equation}
\MMD_{\phi_s}(\mu P^n,\pi)
\leq\kappa_1\rho^n,
\qquad \mu\in\cP(\bbT^d),\quad n\geq0.
\end{equation}

\subsection{Noisy expanding maps}
\label{app:noisy-expanding-details}

For Example~\ref{ex:noisy-expanding-map}, take \(d=1\), \(\ell\geq2\), \(a_0=0\), and
\begin{equation}
a_m=\abs{m}^{-2s},
\qquad m\neq0.
\end{equation}
The wrapped Gaussian noise \(\eta Z\pmod1\) has Fourier coefficients
\begin{equation}
\wh\upsilon(m)
=\bbE e^{-2\pi i m\eta Z}
=e^{-2\pi^2\eta^2m^2}.
\end{equation}
Hence
\begin{equation}
Pe_m
=e^{2\pi i m\omega}
e^{-2\pi^2\eta^2m^2}e_{\ell m},
\end{equation}
and Lemma~\ref{lem:fourier-contraction} yields \( \rho=e^{-2\pi^2\eta^2} \). For the power scale, \( a_m / a_{\ell m} =\ell^{2s} \), independently of \(m\neq0\). Therefore the exact norm is
\begin{equation}
\rho_\alpha
=\ell^{s\alpha}
\sup_{m\neq0}e^{-2\pi^2\eta^2m^2}
=\ell^{s\alpha}e^{-2\pi^2\eta^2}.
\end{equation}
The embedding constant is
\begin{equation}
\kappa_\alpha^2
=\sum_{m\neq0}\abs{m}^{-2s\alpha}
=2\sum_{m\geq1}m^{-2s\alpha},
\end{equation}
which is finite if and only if \(s\alpha>1/2\). Under~\eqref{eq:noisy-expanding-two-conditions}, Lemma~\ref{lem:fourier-contraction} verifies Assumption~\ref{ass:hilbert-scale-gap} with \(\rho_\alpha=\ell^{s\alpha}e^{-2\pi^2\eta^2}\). The claimed MMD bound then follows from Proposition~\ref{prop:mmd-alpha-bound}.

\paragraph{One-step contraction versus contraction of iterates.}
The threshold \eqref{eq:expanding-noise-threshold} concerns the one-step Hilbert-scale condition. It is not necessary for contraction of sufficiently high iterates. Indeed, iterating the Fourier action gives
\begin{equation}
P^n e_m
=
\exp\left(
2\pi i m\omega\frac{\ell^n-1}{\ell-1}
\right)
\exp\left(
-2\pi^2\eta^2m^2\frac{\ell^{2n}-1}{\ell^2-1}
\right)
e_{\ell^n m}.
\end{equation}
Consequently,
\begin{equation}
\opnorm{P^n}_{\cE_\alpha}
=
\ell^{ns\alpha}
\exp\left(
-2\pi^2\eta^2\frac{\ell^{2n}-1}{\ell^2-1}
\right).
\end{equation}
For every \(\eta>0\), this tends to zero as \(n\to\infty\), even when \(\rho_\alpha\geq1\). Whenever \(s\alpha>1/2\), the same argument as in \eqref{eq:fourier-uniform-mmd} gives
\begin{equation}
\MMD_\alpha(\mu P^n,\pi)
\leq \kappa_\alpha\ell^{ns\alpha} \exp\left( -2\pi^2\eta^2\frac{\ell^{2n}-1}{\ell^2-1} \right).
\end{equation}
This distinguishes the sufficient one-step assumption used in the main text from the convergence properties of the chain itself.

\begin{remark}[Drift--noise factorization]
\label{rem:drift-noise}
More generally, on an additive state space \( \cX \), consider
\begin{equation}
X_{t+1}=F(X_t)+\xi_{t+1},
\end{equation}
where \(F:\cX\to\cX\) is measurable and \((\xi_t)_{t\geq1}\) are i.i.d. noise variables. Define the deterministic Koopman operator \(U_F\) and the noise operator \(Q_\xi\) by
\begin{equation}
(U_Ff)(x):=f(F(x)),
\qquad (Q_\xi f)(x):=\bbE[f(x+\xi_1)].
\end{equation}
Whenever the expectations are well defined,
\begin{equation}
Pf(x)
=\bbE[f(F(x)+\xi_1)]
=(U_FQ_\xi f)(x),
\end{equation}
so \(P=U_FQ_\xi\). Suppose that both operators preserve the chosen centered space \(\cE_\alpha\) and act boundedly on it. Then
\begin{equation}
\opnorm{P}_{\cE_\alpha}
\leq \rho_{F,\alpha}\rho_{\xi,\alpha},
\qquad \rho_{F,\alpha}
:=\opnorm{U_F}_{\cE_\alpha},
\quad \rho_{\xi,\alpha}
:=\opnorm{Q_\xi}_{\cE_\alpha}.
\end{equation}
These are substantive requirements: invariance of \(\pi\) under \(P\) alone need not imply that the two factors separately preserve the centered space. Moreover, boundedness of \(U_F\) in a smooth function space requires appropriate regularity of \(F\). By duality, the same bounds hold for the push-forward action on \(\cE_{-\alpha}\). Together with the bounded pointwise realization, a product strictly below one implies Assumption~\ref{ass:hilbert-scale-gap}.

\medskip
For the noisy expanding map in Example~\ref{ex:noisy-expanding-map}, \( \rho_{F,\alpha}=\ell^{s\alpha} \),  \( \rho_{\xi,\alpha}=e^{-2\pi^2\eta^2} \), and the product bound is sharp. The deterministic factor measures the amplification of frequencies, while the noise factor measures their attenuation.  This also clarifies the relation with Example~\ref{ex:noisy-irrational-rotation}. There the deterministic map is the translation \(x\mapsto x+\omega\), which plays the role of \(\ell=1\). Thus the deterministic Koopman operator is an isometry on the translation-invariant power scale, and the amplification factor is absent. For a general noise density \(\upsilon\), the contraction factor is \( \sup_{m\neq0}\abs{\wh\upsilon(m)} \), which is precisely the Fourier multiplier appearing in the noisy rotation example.
\end{remark}

\subsection{Statistical learning assumptions for a noisy doubling map}
\label{app:noisy-doubling-learning}

We specialize Example~\ref{ex:noisy-expanding-map} to \(\ell=2\), \(\omega=0\) and Gaussian noise scale
\begin{equation}
\eta:=\sqrt{\frac{\log 4}{2\pi^2}}.
\end{equation}
Representing \(\bbT\) by \([0,1)\), consider
\begin{equation}
X_{t+1}=2X_t+\eta Z_{t+1}\pmod 1,
\qquad X_0\sim\pi, \; \pi(dx)=dx,
\label{eq:nd-dynamics}
\end{equation}
where the \(Z_t\) are independent standard Gaussian random variables, independent of \(X_0\). The modulo operation takes values in \([0,1)\), and the response in the learning problem is this real-valued representative of \(X_{t+1}\). We use the periodic Sobolev kernel
\begin{equation}
k(x,y)
:=\sum_{m\in\bbZ}a_m e^{2\pi i m(x-y)}
\qquad \text{where} \quad
a_m:=\frac{1}{1+m^2},
\label{eq:nd-kernel}
\end{equation}
with real RKHS \(\cH\) and canonical feature map \(\phi(x)=k(x,\cdot)\). We verify all assumptions of Theorem~\ref{thm:final-sample-rate}.

\paragraph{Noise distribution and stationarity.}
The wrapped Gaussian perturbation \(\eta Z\pmod 1\) has density
\begin{equation}
\upsilon_\eta(u)
=\frac{1}{\sqrt{2\pi}\eta}
\sum_{j\in\bbZ}
\exp\left(-\frac{(u+j)^2}{2\eta^2}\right),
\qquad u\in[0,1).
\label{eq:nd-noise}
\end{equation}
Its Fourier coefficients follow from the Gaussian characteristic function:
\begin{align}
\wh \upsilon_\eta(m)
&=\int_0^1 e^{-2\pi i m u}\upsilon_\eta(u)\,du
=\int_{\bbR}e^{-2\pi i m z}
\frac{1}{\sqrt{2\pi}\eta}
\exp\left(-\frac{z^2}{2\eta^2}\right)\,dz
=e^{-2\pi^2\eta^2m^2}
=4^{-m^2}.
\end{align}
Consequently,
\begin{equation}
\upsilon_\eta(u)
=1+2\sum_{m\geq1}4^{-m^2}\cos(2\pi m u),
\label{eq:nd-noise-fourier}
\end{equation}
with absolute and uniform convergence. Since
\begin{equation}
\sum_{m\geq1}4^{-m^2}
\leq\sum_{m\geq1}4^{-m}
=\frac13,
\end{equation}
we obtain the uniform bounds
\begin{equation}
\frac13\leq\upsilon_\eta(u)\leq\frac53.
\label{eq:nd-noise-bounds}
\end{equation}
Extending \(\upsilon_\eta\) periodically, the transition kernel is
\begin{equation}
P(x,dy)=\upsilon_\eta(y-2x)\,dy.
\end{equation}
For every \(y\in[0,1)\),
\begin{equation}
\int_0^1\upsilon_\eta(y-2x)\,dx
=\frac12\int_{y-2}^{y}\upsilon_\eta(u)\,du
=1.
\end{equation}
Thus \(\pi P=\pi\), and the initialization in \eqref{eq:nd-dynamics} makes the chain stationary. The lower bound in \eqref{eq:nd-noise-bounds} gives
\(P(x,\cdot)\geq(1/3)\pi\).
Writing
\begin{equation}
P(x,\cdot)=\frac13\pi+\frac23R(x,\cdot)
\end{equation}
defines a Markov kernel \(R\) with \(\pi R=\pi\). It follows that
\begin{equation}
\norm{\mu P^n-\pi}_{\TV}
\leq\left(\frac23\right)^n\norm{\mu-\pi}_{\TV},
\qquad n\geq0.
\end{equation}
In particular, \(\pi\) is the unique invariant probability measure. It also has finite second moment, since the scalar state lies in \([0,1)\).

\paragraph{The \(L^2(\pi)\) spectral gap.}
Since \(\wh \upsilon_\eta (m)=4^{-m^2}\), the common Fourier formula~\eqref{eq:fourier-markov-action} gives
\begin{equation}
Pe_m=4^{-m^2}e_{2m}.
\label{eq:nd-fourier-action}
\end{equation}
Lemma~\ref{lem:fourier-contraction} therefore yields
\begin{equation}
\rho
=\opnorm{P}_{L_0^2(\pi)}
=\sup_{m\neq0}4^{-m^2}
=\frac14,
\qquad \text{and} \qquad
\tau_{\rel}=\frac43.
\label{eq:nd-l2-gap}
\end{equation}


\paragraph{Bounded features and the covariance spectrum.}
Since \(\sum_{m\in\bbZ}a_m<\infty\), the series defining \(k\) converges absolutely and uniformly. The nonnegative Fourier coefficients make \(k\) positive definite, and their symmetry makes it real-valued. The kernel is continuous and its RKHS is separable. Its canonical feature map is continuous and satisfies
\begin{equation}
\norm{\phi(x)}_\cH^2
=k(x,x)
=\sum_{m\in\bbZ} a_m
=:\kappa^2<\infty,
\qquad x\in\cX.
\label{eq:nd-feature-bound}
\end{equation}
Introduce the real orthonormal basis of \(L^2(\pi)\)
\begin{equation}
u_0(x):=1,
\qquad u_m^{\mathrm c}(x):=\sqrt2\cos(2\pi m x),
\qquad u_m^{\mathrm s}(x):=\sqrt2\sin(2\pi m x),
\quad m\geq1.
\end{equation}
The corresponding orthonormal basis of \(\cH\) is
\begin{equation}
h_0:=u_0,
\qquad h_m^{\mathrm c}:=\sqrt{a_m}\,u_m^{\mathrm c},
\qquad h_m^{\mathrm s}:=\sqrt{a_m}\,u_m^{\mathrm s}.
\end{equation}
For \(\Sigma=\int\phi(x)\otimes\phi(x)\,\pi(dx)\), the reproducing property gives
\begin{equation}
\la\Sigma h,h'\ra_\cH
=\int h(x)h'(x)\,\pi(dx).
\end{equation}
Consequently,
\begin{equation}
\Sigma h_0=h_0,
\qquad
\Sigma h_m^{\mathrm c}=a_m h_m^{\mathrm c},
\qquad
\Sigma h_m^{\mathrm s}=a_m h_m^{\mathrm s}.
\end{equation}
Its eigenvalues, listed in non-increasing order, are therefore
\begin{equation}
\lambda_j(\Sigma)
=\frac{1}{1+\lfloor j/2\rfloor^2},
\qquad j\geq1.
\end{equation}
Using \((2m+1)^2\leq5(1+m^2)\), we obtain
\begin{equation}
\lambda_j(\Sigma)\leq5j^{-2}.
\label{eq:nd-capacity}
\end{equation}
Thus the polynomial eigenvalue-decay assumption holds with \(B=5\), \(b=2\), and \(p=1/2\). Moreover,
\begin{equation}
\cN(\lambda)
=\sum_{m\in\bbZ}\frac{a_m}{a_m+\lambda}
=\sum_{m\in\bbZ}\frac{1}{1+\lambda(1+m^2)}
\asymp\lambda^{-1/2},
\qquad \lambda\downarrow0.
\end{equation}
The upper bound follows by integral comparison, while the lower bound follows by retaining the terms with \(\abs{m}\leq\lambda^{-1/2}\).

\paragraph{The covariance kernel.}
The feature map \( \Phi \) governing covariance estimation induces the kernel
\begin{equation}
K(x,y)
=\la\Phi(x),\Phi(y)\ra_{\cL_2(\cH)}
=k(x,y)^2.
\end{equation}
Absolute convergence permits multiplication of the Fourier series:
\begin{equation}
K(x,y)
=\sum_{m\in\bbZ}c_m e^{2\pi i m(x-y)},
\qquad \text{where} \quad
c_m:=\sum_{j\in\bbZ}a_j a_{m-j}.
\label{eq:nd-covariance-kernel}
\end{equation}
In particular,
\begin{equation}
c_0=\sum_{j\in\bbZ}(1+j^2)^{-2},
\qquad
\sum_{m\in\bbZ}c_m=\kappa^4.
\end{equation}
For \(m\neq0\), we claim that
\begin{equation}
c_m=\frac{2\kappa^2}{m^2+4}.
\label{eq:nd-tensor-coefficients}
\end{equation}
Indeed, the partial-fraction identity
\begin{align}
\frac{1}{(1+j^2)(1+(m-j)^2)}
=\frac{1}{m^2+4}
\bigg[
&\frac{1}{1+j^2}+\frac{1}{1+(m-j)^2}
+\frac{2}{m}
\left(
\frac{j}{1+j^2}
+\frac{m-j}{1+(m-j)^2}
\right)
\bigg]
\end{align}
can be summed over \(-N\leq j\leq N\). As \(N\to\infty\), the first two sums converge to \(\kappa^2\). The sum of \(j/(1+j^2)\) vanishes by symmetry. The shifted sum of \((m-j)/(1+(m-j)^2)\) tends to zero because its difference from a symmetric sum consists of finitely many boundary terms tending to zero. This proves \eqref{eq:nd-tensor-coefficients}.

\medskip \noindent
Since \(\int K(x,y)\,\pi(dy)=c_0\), the centered covariance kernel is
\begin{equation}
\overline K(x,y)
=K(x,y)-c_0
=\sum_{m\neq0}c_m e^{2\pi i m(x-y)}.
\end{equation}
Its nonzero eigenvalues are \(c_m\), \(m\neq0\).

\paragraph{Hilbert-scale boundedness and contraction.}
We apply Lemma~\ref{lem:fourier-contraction} to the centered covariance kernel \(\overline K\), using its Fourier coefficients \(c_m\) as the
weights in that lemma. By~\eqref{eq:nd-tensor-coefficients}, the embedding constant is
\begin{equation}
\kappa_\alpha^2
=\sum_{m\neq0}c_m^\alpha
=(2\kappa^2)^\alpha
\sum_{m\neq0}(m^2+4)^{-\alpha}
<\infty,
\qquad \alpha>\frac12.
\label{eq:nd-scale-bound}
\end{equation}
Since \(\wh \upsilon_\eta (m)=4^{-m^2}\), the same lemma gives, for \(\alpha\in(1/2,1]\),
\begin{align}
\rho_\alpha
:=\opnorm{P}_{\cE}
=\sup_{m\geq1}
4^{-m^2}\left(\frac{c_m}{c_{2m}}\right)^{\alpha/2}
=\sup_{m\geq1}
4^{-m^2}
\left(\frac{4m^2+4}{m^2+4}\right)^{\alpha/2}
=\frac14\left(\frac85\right)^{\alpha/2}
\leq\frac{1}{\sqrt{10}}<1.
\label{eq:nd-scale-contraction}
\end{align}
Indeed, the supremum is attained at \(m=1\): for \(m\geq2\), the expression is at most \(4^{-4}2^\alpha\leq1/128\), whereas its value at \(m=1\) exceeds \(1/4\). This verifies Assumption~\ref{ass:hilbert-scale-gap} for \(K=k^2\). By~\eqref{eq:fourier-measure-bound},
\(\norm{\delta_x-\pi}_{-\alpha}=\kappa_\alpha\) for every \(x\). In particular,
\begin{equation}
\rho_1=\frac{1}{\sqrt{10}},
\qquad
\kappa_1^2
=\sum_{m\neq0}c_m
=\kappa^4-c_0.
\end{equation}

\paragraph{The regression function and innovations.}
The one-step prediction function, for responses represented in \([0,1)\), is
\begin{equation}
f_\star(x)
=\int_0^1y\,\upsilon_\eta(y-2x)\,dy.
\end{equation}
The uniform convergence in \eqref{eq:nd-noise-fourier} permits termwise integration. Using
\begin{equation}
\int_0^1y\cos(2\pi m y)\,dy=0,
\qquad \text{and} \qquad
\int_0^1y\sin(2\pi m y)\,dy=-\frac{1}{2\pi m},
\end{equation}
we obtain
\begin{equation}
f_\star(x)
=\frac12-\frac1\pi
\sum_{m\geq1}\frac{4^{-m^2}}{m}\sin(4\pi m x).
\label{eq:nd-regression}
\end{equation}
This series and its derivatives of every order converge uniformly.

\medskip \noindent
The innovation appearing in the learning assumptions is
\begin{equation}
\varepsilon_{t+1}
:=X_{t+1}-f_\star(X_t),
\end{equation}
so that \(\bbE[\varepsilon_{t+1}\mid X_t]=0\). Since both the response after reduction modulo one and its conditional mean lie in \([0,1]\), \( \abs{\varepsilon_{t+1}}\leq1 \). Furthermore, \(y^2\leq y\) on \([0,1]\), and hence, for every \(x\),
\begin{align}
\int\abs{y-f_\star(x)}^2 P(x,dy)
&=\int y^2P(x,dy)-f_\star(x)^2
\leq f_\star(x)\bigl(1-f_\star(x)\bigr)
\leq\frac14.
\end{align}
Thus the bounded-innovation assumption holds with \(M=1\) and \(\sigma^2=1/4\). These constants concern the prediction innovation, whereas \(\eta\) is the scale of the Gaussian perturbation before reduction modulo one.

\paragraph{Well-specifiedness and the source condition.}
In the orthonormal RKHS basis introduced above, define
\begin{equation}
w_\star
:=\frac12h_0
-\frac{1}{\sqrt2\pi}
\sum_{m\geq1}
\frac{4^{-m^2}}{m\sqrt{a_{2m}}}\,h_{2m}^{\mathrm s}.
\label{eq:nd-target-parameter}
\end{equation}
This series converges in \(\cH\), because
\begin{equation}
\norm{w_\star}_\cH^2
=\frac14+\frac{1}{2\pi^2}
\sum_{m\geq1}\frac{16^{-m^2}(1+4m^2)}{m^2}
<\infty.
\end{equation}
Since \(h_{2m}^{\mathrm s}(x)=\sqrt{2a_{2m}}\sin(4\pi m x)\), its pointwise realization agrees with \eqref{eq:nd-regression}. Convergence in \(\cH\) also implies uniform convergence by \eqref{eq:nd-feature-bound}.
Thus
\begin{equation}
f_\star(x)=\la w_\star,\phi(x)\ra_\cH,
\qquad x\in\cX,
\end{equation}
which verifies well-specifiedness.

\medskip
For any fixed \(r\in[1/2,1]\), define
\begin{equation}
g_\star
:=\frac12h_0
-\frac{1}{\sqrt2\pi}
\sum_{m\geq1}
\frac{4^{-m^2}}{m a_{2m}^{r}}\,h_{2m}^{\mathrm s}.
\end{equation}
Then
\begin{equation}
G_\star^2
:=\norm{g_\star}_\cH^2
=\frac14+\frac{1}{2\pi^2}
\sum_{m\geq1}
\frac{16^{-m^2}(1+4m^2)^{2r}}{m^2}
<\infty.
\label{eq:nd-source-constant}
\end{equation}
Since \(\Sigma h_{2m}^{\mathrm s} =a_{2m}h_{2m}^{\mathrm s}\), applying \(\Sigma^{r-1/2}\) yields
\begin{equation}
\Sigma^{r-1/2}g_\star=w_\star.
\end{equation}
The source condition therefore holds for every \(r\in[1/2,1]\).


\medskip
All assumptions of Theorem~\ref{thm:final-sample-rate} hold with
\begin{equation}
\rho=\frac14,\qquad
\rho_\alpha=\frac14\left(\frac85\right)^{\alpha/2},
\qquad
M=1,\qquad \sigma^2=\frac14,\qquad
B=5,\qquad b=2,
\end{equation}
for every \(\alpha\in(1/2,1]\) and \(r\in[1/2,1]\). The finite constants \(\kappa\), \(\kappa_\alpha\), and \(G_\star\) are given in \eqref{eq:nd-feature-bound}, \eqref{eq:nd-scale-bound}, and \eqref{eq:nd-source-constant}, respectively.

\end{document}